\newcommand{\Z}{\mathbb{Z}}
\newcommand{\C}{\mathbb{C}}
\newcommand{\R}{\mathbb{R}}
\newcommand{\bN}{\mathbb{N}}
\newcommand{\E}{\mathbb{E}} 
\DeclareMathOperator{\ric}{Ric}
\DeclareMathOperator{\ord}{ord}
\DeclareMathOperator{\Div}{Div}
\begin{document}
\selectlanguage{english}

\title[Large deviations on punctured Riemann surfaces]{
Large deviations for zeros of holomorphic 
sections on punctured Riemann surfaces}
\author{Alexander Drewitz, Bingxiao Liu and George Marinescu} 
\address{Universit{\"a}t zu K{\"o}ln,  Mathematisches Institut,
    Weyertal 86-90,   50931 K{\"o}ln, Germany}
    \email{drewitz@math.uni-koeln.de}
\email{bxliu@math.uni-koeln.de}
\email{gmarines@math.uni-koeln.de}
\thanks{The authors are partially supported by the 
DFG Priority Program 2265 \lq Random Geometric Systems\rq.}

\date{\today}

\maketitle

\begin{abstract}
In this article we obtain large deviation estimates for zeros 
of random holomorphic sections on punctured Riemann surfaces. 
These estimates are then employed to yield estimates for the respective hole 
probabilities. A particular case of relevance that is covered by our setting is that of cusp forms on arithmetic surfaces.
Most of the results we obtain also allow for reasonably general probability distributions on holomorphic 
sections, which shows the universal character of these estimates.
Finally, we also extend our results to the case of certain higher 
dimensional complete Hermitian manifolds, which are not necessarily 
assumed to be compact.

\end{abstract}

\theoremstyle{plain}
\newtheorem{theorem}{Theorem}[section]
\newtheorem{lemma}[theorem]{Lemma}
\newtheorem{proposition}[theorem]{Proposition}
\newtheorem{corollary}[theorem]{Corollary}

\theoremstyle{remark}
\newtheorem{definition}[theorem]{Definition}
\newtheorem{remark}[theorem]{Remark}
\newtheorem{example}[theorem]{Example}



\selectlanguage{english}
\tableofcontents

\setcounter{page}{1}
\numberwithin{equation}{section}

\section{Introduction}

\subsection{Zeros of random holomorphic sections}
One particularly important aspect in study of random functions or stochastic processes has been the investigation of their zero sets, see \cite{AT:07} and \cite{AW:09} as well as references therein. In order to obtain stronger implications we will impose some further assumptions here, focusing on the case of geometric generalizations of random polynomials, i.e.\ random holomorphic sections. 

In order to motivate and introduce our setting, we begin with recalling that for analytic functions
$\sum_{n=0}^\infty a_nz^n$
whose coefficients $a_n$ are assumed to be
independent random variables,
Offord proved 
in his fundamental article \cite{Off67}  the exponential
decay of the tail probabilities of an analytic function having an excess or
deficiency of zeros in a given region. 
More recently, Sodin \cite{So00} used Offord’s
method to improve Offord's exponential bound on
the probability that a
random analytic function has no zeros in a disk
of radius $r$ (hole probability), by showing that it decays at least at the rate 
$\mathcal{O}(e^{-Cr^2})$. This result has since been refined and 
extended in various ways in a series of papers \cite{Kr06,PV05,ST04,Zb07}.

A change of paradigm has then been introduced by Shiffman, Zelditch 
and Zrebiec in the seminal article \cite{SZZ} by 
 generalizing the situation described above to compact K\"ahler manifolds
and zeros of holomorphic sections $H^0(X,L^p)$ of powers
of a positive line bundle. The principal interest in this setting is the study of the distribution
of zeros as $p\to\infty$. In this situation, the power series representation of an analytic function is not canonical anymore, and, as a consequence, one has to replace the
arguments based on the power series by more analytic and geometric methods which are
appropriate for the study of holomorphic sections. 
In particular, these include tools such as Bergman kernels and coherent states 
asymptotics, 
which are by now deeply rooted
in the study of the geometry of K\"ahler manifolds.

In this paper, we generalize the results of \cite{SZZ} in two directions: 
From a geometric point of view, we are now concerned with 
the case of noncompact (complete) complex manifolds. 
On the probabilistic side we allow for 
probability measures which are no longer Gaussian anymore; 
instead, these probability measures will be assumed to fulfill some 
rather general conditions which entail a certain universality of the results we obtain.
In \cite{BCM20} (see also \cite{BCHM18} for a survey) 
it was shown that the equidistribution of zeros
takes place for a large class of probability measures satisfying
a certain moment condition (e.\,g.\ measures with
heavy tail probability and small ball probability, or measures with support contained in totally 
real subsets of the complex probability space).
Analogous equidistribution results for non-Gaussian ensembles are 
proved in \cite{B6,B7,BL15,DS06}.
In this paper we consider 
probability measures satisfying very mild conditions in terms
of their densities (see Section \ref{section1.3prob}) .

We will primarily focus on the case of a Riemann surface with cusps
and prove large deviations estimates for zeros
of random holomorphic sections of high powers $L^p$ of a holomorphic line bundle
$L$ whose curvature equals
the Poincar\'e metric near the cusps. A special case is that of
cusp forms of high degree $2p$.
For such a bundle $L$, Auvray, Ma and Marinescu \cite{AMM:20} 
(cf.\ also \cite{AMM:20b}) gave a very precise 
description of the Bergman kernel near the cusps; in particular, they provide
an optimal uniform estimate of the
supremum norm of the Bergman kernel, involving the fractional 
growth order $p^{3/2}$ in the
tensor power (the growth order is $p$ in the compact case).
Using this estimate we obtain in Theorem \ref{thm:1.3.2} asymptotic bounds for
the expectation and the tail probability of the maximum modulus of a 
random section on an open set. What is more, we also establish extensions to the case of higher dimensional Hermitian 
manifolds under suitable conditions.

We now introduce the setting for our bounds on the excess or deficiency probabilities of zeros. Indeed, for a compact K\"ahler manifold $(X,\omega)$ endowed with
a Hermitian holomorphic line bundle $(L,h)$
with positive curvature $\omega=c_1(L,h)$, Shiffman-Zelditch \cite{ShZ99}
showed that the normalized currents of integration $\frac1p[\Div(s_p)]$ over 
zero divisors of a random sequence of sections $s_p\in H^0(X,L^p)$
converge almost surely to $c_1(L,h)$ as $p\to\infty$.
This result was generalized to the
noncompact setting in \cite{DMS12} and to the setting of singular metrics
whose curvature is a K\"ahler current in \cite{BCM20,CM11,DMM}. 
It holds also in our present setting and implies that
the number of zeros (counted with multiplicity) of a random
section $s_p$ in an open set $U$ with negligible boundary
is asymptotically equal to $p$ times the area of $U$ in the
metric given by $c_1(L,h)$.
In Theorem \ref{thm:1.2.1} we prove this result in our setting 
and show that the probability
that a section $s_p$ has an excess or deficiency of zeros in $U$ 
(when centered around its typical value)
decreases at rate $\exp(-Cp^2)$, this being consistent with the 
decay obtained in Sodin \cite{So00} cited above.

\subsection{Geometric setting: punctured Riemann surfaces}
We will use the notation $\overline{\Sigma}$ to denote a compact Riemann surface 
and write $D=\{a_{1},\ldots, a_{N}\}\subset \overline{\Sigma}$ for a finite set. 
The induced punctured Riemann surface will be denoted by 
$\Sigma=\overline{\Sigma}\backslash D,$ and  $\omega_{\Sigma}$ 
will be a Hermitian form on $\Sigma$. 
We furthermore let $L$ be a 
holomorphic line bundle on $\overline{\Sigma}$, and denote by $h$ 
a singular Hermitian metric on $L$ satisfying the following properties:
\begin{enumerate} [label=(\greek*)]
\item \label{item:alpha} $h$ is smooth over $\Sigma$, and for all 
$j \in \{ 1, \ldots, N\}$ 
there is a trivialization of $L$ in the complex neighborhood $\overline{V}_{j}$ 
of $a_{j}$ in $\overline{\Sigma}$, with associated coordinate $z_{j}$ such that 
$|1|^{2}_{h}(z_{j}) = |\log(|z_{j}|^{2})|$.
\item \label{item:beta} There exists $\varepsilon_{0}>0$ such that the (smooth) curvature 
	$R^{L}$ of $h$ satisfies $iR^{L} \geq \varepsilon_{0} \omega_{\Sigma}$ 
	over $\Sigma$ and moreover, $iR^L =\omega_{\Sigma}$ on $V_j 
	:=\overline{V}_j\backslash\{a_{j}\}$; in particular, $\omega_{\Sigma} = 
	\omega_{\mathbb{D}^{*}}$ in the local coordinate $z_{j}$ on $V_{j}$ 
	and $(\Sigma,\omega_{\Sigma})$ is complete.
\end{enumerate}

Here, $\omega_{\mathbb{D}^{*}}$ denotes the Poincar\'{e} metric on 
the punctured unit disc $\mathbb{D}^{*}$, normalized as
\begin{equation}
	\omega_{\mathbb{D}^{*}}=\frac{idz\wedge 
	d\overline{z}}{|z|^{2}\log^{2}(|z|^{2})}\,\cdot
	\label{eq:1.1.1}
\end{equation}
Since $h$ is 
assumed to be a Hermitian metric, on the local chart $V_{j}$ as in 
Assumption \ref{item:alpha}, the coordinate $z_{j}$ 
has norm strictly less than $1$, so that the area (volume) of $V_{j}$ with 
respect to measure $\omega_{\Sigma}$ is finite.

Let 
$J\in\mathrm{End}(T\Sigma)$ denote the complex structure of 
$\Sigma$ and write $g^{T\Sigma}=\omega_{\Sigma}(\cdot,J\cdot)$ for
the complete Riemannian metric on $\Sigma$, so that the corresponding 
Riemannian volume element is exactly $\omega_{\Sigma}$. For 
$x\in\Sigma$ and $v\in T_{x}\Sigma$ we denote by $\Vert v\Vert$ the norm 
of $v$ with the metric $g^{T\Sigma}_{x}$. For $x,y\in 
\Sigma$, we write $\mathrm{dist}(x,y)$ for their Riemannian distance. 
Furthermore, for $x\in\Sigma$ we set
\begin{equation}
	a(x)=iR^{L}_{x}/\omega_{\Sigma,x}\geq\varepsilon_{0}>0.
\end{equation}

For $p\geq 1$, we denote by $h^{p}:=h^{\otimes p}$ the metric induced by 
$h$ on $L^{p}|_{\Sigma}$. We write $H^{0}(\Sigma, L^{p})$ for the space 
of holomorphic sections of $L^{p}$ on $\Sigma$ and 
$\mathcal{L}^{2}(\Sigma, L^{p})$ for the space of 
$\mathcal{L}^{2}$-sections of $L^{p}$ on $\Sigma$. Set
\begin{equation}
H^{0}_{(2)}(\Sigma, L^{p})
=H^{0}(\Sigma, L^{p})\cap\mathcal{L}^{2}(\Sigma, L^{p})
=\Big\{s\in H^{0}(\Sigma, L^{p})\;:\; 
\|s\|^{2}_{\mathcal{L}^{2}}:=\int_{\Sigma}|s|_{h^{p}}^{2}\, 
\omega_{\Sigma}<\infty\Big\},
\label{eq:1.1.2}
\end{equation}
which we tacitly assume to be endowed with the $\mathcal{L}^{2}$-metric. 
Then the sections in 
$H^{0}_{(2)}(\Sigma, L^{p})$ extend to holomorphic sections of 
$L^{p}$ over $\overline{\Sigma}$, i.e.
\begin{equation}
	H^{0}_{(2)}(\Sigma, L^{p})\subset H^{0}(\overline{\Sigma}, L^{p}).
\end{equation}
Moreover, for $p\geq 2$, elements in $H^{0}_{(2)}(\Sigma, L^{p})$ are 
exactly the sections in $H^{0}(\overline{\Sigma}, L^{p})$ vanishing on the 
puncture divisor $D$.

In the 
sequel, we write $c_{1}(L,h)$ for the first Chern form of $(L,h)$, i.e.
\begin{equation}
	c_{1}(L,h)=\frac{i}{2\pi}R^{L}.
	\label{eq:1.2.2paris}
\end{equation}
Hence, as a volume form on $\Sigma$ we have $c_{1}(L,h) \geq 
\frac{\varepsilon_{0}}{2\pi}\omega_{\Sigma}$ due to \ref{item:alpha}.
We furthermore set
\begin{equation}
	d_{p}=\dim H^{0}_{(2)}(\Sigma, L^{p})<\infty
\end{equation}
and denote by $\chi(\Sigma)$ the Euler characteristic number of the 
punctured Riemann surface $\Sigma.$ 
Then, as a consequence of the Riemann-Roch Theorem, we infer that
\begin{equation}
	d_{p}=p\deg(L)+\chi(\Sigma),
	\label{eq:dpL}
\end{equation}
where $\deg(L)=\int_{\Sigma}c_{1}(L,h)<\infty$.

Furthermore, we 
will denote the Schwartz kernel of the orthogonal 
projection from $\mathcal{L}^{2}(\Sigma,L^{p})$ onto 
$H^{0}_{(2)}(\Sigma, L^{p})$, called Bergman kernel, 
by $B_{p}(x,y)$ for $x, y\in \Sigma$. If $S^{p}_{j}$, 
$j=1,\ldots, d_{p}$ is an orthonormal basis of $H^{0}_{(2)}(\Sigma, 
L^{p})$ with respect to the $\mathcal{L}^{2}$ inner product, then 
\begin{equation}
	B_{p}(x,y)=\sum_{j=1}^{d_{p}}S^{p}_{j}(x)\otimes 
	S^{p,*}_{j}(y)\in L^{p}_{x}\otimes L^{p,*}_{y},
	\quad\text{for $x,y\in \Sigma$,}
\end{equation}
where the duality is defined by $h^{p}$. In particular, $B_{p}(x,x)$ 
is a positive function in $x\in\Sigma$.

\subsection{Probabilistic setting}
\label{section1.3prob}
For each $p\in \mathbb{N}$, we will endow $H^{0}_{(2)}(\Sigma, L^{p})$ 
with a probability measure $\Upsilon_{p}$ and hence obtain a 
sequence 
of probability spaces $(H^{0}_{(2)}(\Sigma, 
L^{p}),\Upsilon_{p})_{p\in\mathbb{N}}$. In order to construct the
sequence $\{\Upsilon_{p}\}_{p\in \mathbb{N}}$ we proceed as 
follows. For each $p\in\mathbb{N}$, we fix an orthonormal basis 
$O_{p}=\{S^{p}_{j}\}_{j=1}^{d_{p}}$ for $H^{0}_{(2)}(\Sigma, 
L^{p})$ with respect to the respective $\mathcal{L}^{2}$-inner products. 
We assume given a family 
of independent $\C$-valued random variables 
$\{\eta^{p}_{j}\}_{p\in\mathbb{N},\, 1\leq j\leq d_{p}}$ 
such that the following are satisfied:
\begin{itemize}
	\item uniformly bounded densities: each 
	$\eta^{p}_{j}$ admits a probability density 
	function (PDF) $f^{p}_{j}$ on $\C$ with respect to the standard 
	Lebesgue measure on $\C\simeq \R^{2}$, 
	and there exists a constant $M_{0}>0$ such that 
	all $p$ and all $1\leq j\leq d_{p}$, 
	\begin{equation}
		\sup_{z\in \C} f^{p}_{j}(z)\leq M_{0};
		\label{eq:5.1.2}
	\end{equation}
	\item uniform lower bound for variances: for each $p$, the 
	random variables
	$\eta^{p}_{j}$, $1\leq j\leq d_{p}$ are centered (i.e., 
	$\mathbb{E}[\eta^{p}_{j}]=0$) and have the same variance 
	$\sigma^{2}_{p}>0$ ($\sigma_{p}>0$). 
	Moreover, there exists $c_{0}>0$ such that for all $p$, 
	\begin{equation}
		c_{0}\leq \sigma^{2}_{p} < \infty;
		\label{eq:5.1.1}
	\end{equation}	
	\item moment bounds: there exists $C_{0}>0$ 
	such that for all $p,\,1\leq j\leq d_{p}$, we 
	have
	\begin{equation}
		\mathbb{E}[|\eta^{p}_{j}|^{d_{p}}]\leq C_{0}(d_{p})^{d_{p}}.		
		\label{eq:5.1.3}
	\end{equation}
\end{itemize}

\begin{remark}\label{rk1.3.1}
All of the above conditions are rather natural in avoiding degeneracies. 
Indeed, condition \eqref{eq:5.1.2} limits the concentration of 
$\eta^{p}_{j}$ in small areas of $\C,$ and  
	condition 
	\eqref{eq:5.1.3} avoids an overly fast growth of moments. 
	The conditions are relatively mild in that it is easily seen to 
	be verified for a wide range of distributions including 
	e.g.\ sub-Gaussian or exponential distributions. 
	
\end{remark}

For each $p\in \mathbb N,$ the orthonormal basis $O_{p}$ induces an identification 
$H^{0}_{(2)}(\Sigma, L^{p})\simeq \C^{d_{p}}$, where the section 
$s_{p}=\sum_{j=1}^{d_{p}}z_{j}S^{p}_{j}$ maps to the vector 
$(z_{1},\ldots, z_{d_{p}})\in\C^{d_{p}}$. Denoting by
$\mathrm{dVol}_{p}$ the standard Lebesgue measure on 
$\C^{d_{p}}\simeq \R^{2d_{p}},$ this naturally induces a probability measure $\Upsilon_{p}$ on 
$H^{0}_{(2)}(\Sigma, L^{p})$ via 
\begin{equation}
	\prod_{j=1}^{d_{p}}f^{p}_{j}(z_{j}) \, \mathrm{dVol}_{p}.
	\label{eq:5.1.4}
\end{equation}
For later use, we will abbreviate the respective density as
\begin{equation}
	f^{p}(z_{1},\ldots,z_{d_{p}})=\prod_{j=1}^{d_{p}}f^{p}_{j}(z_{j}).
	\label{eq:5.1.5}
\end{equation}
Hence, using the above identification, a random section in 
$(H^{0}_{(2)}(\Sigma, L^{p}),\Upsilon_{p})$  with distribution $\Upsilon_{p}$ can be written as
\begin{equation}
	s_{p}=\sum_{j=1}^{d_{p}}\eta^{p}_{j} S^{p}_{j}.
	\label{eq:5.1.5bis}
\end{equation}

In general, $\Upsilon_{p}$ (and $f^{p}$) depends on both 
the choice of orthonormal basis 
$O_{p}$ and the sequence $\{\eta^{p}_{j}\}_{j=1}^{d_{p}}$.

In 
the sequel, we fix -- once and for all -- a choice of the above constants 
$M_{0}$, $c_{0}$, $C_{0}$. Moreover, most of the constants appearing 
in our computations through this paper will 
depend on this choice, but we will not make this dependence explicit in our notation.

Some examples of families of probability measures satisfying 
the above assumptions are given in the following.
\begin{example}[Gaussian ensembles]\label{eg:1.3.2}
As in \cite{SZZ}, a natural choice for $\Upsilon_{p}$ 
is taking the sequence 
$\{\eta^{p}_{j}\}_{p\in\mathbb{N},1\leq j\leq d_{p}}$ 
to be i.i.d.\ centered complex Gaussian 
random variables with positive variance. Then the conditions 
\eqref{eq:5.1.2}--\eqref{eq:5.1.3} are clearly satisfied
and in this case $\Upsilon_{p}$ is independent 
of the choice of basis
$O_{p}$.
\end{example}

\begin{example}[Random sections with bounded coefficients]\label{eg:1.3.3}
Let $r_{p}$, $p\in\bN,$ be a sequence of positive numbers 
uniformly bounded below by $r>0$. Let $U_{p}$ denote a complex 
random variable which is uniformly distributed on the disk 
$D(0,r_{p})\subset \C$. For each $p$, we take $\eta^{p}_{j}$, 
$1\leq j\leq d_{p},$ to be a sequence of i.i.d.\ random variables 
with the same distribution as $U_{p}$. Then
\begin{equation}
	\mathbb{E}[|U_{p}|^{d_{p}}]=\frac{2}{d_{p}+2}r_{p}^{d_{p}},
	\label{eq:5.1.10DLM}
\end{equation}
and hence in order to satisfy Condition \eqref{eq:5.1.3} we shall choose $r_{p}\leq 
d_{p}$ for all $p$.
\end{example}

\subsection{Main results for punctured Riemann 
surface}\label{subsection1.4main}
Inspired by the work \cite{SZZ}, we start with investigating 
the supremum norm of random holomorphic sections on open sets.
For this purpose, for $U$ a non-empty open subset of $\Sigma$ and 
$s_{p}\in H^{0}_{(2)}(\Sigma, L^{p})$, we set
\begin{equation}
	\mathcal{M}^{U}_{p}(s_{p})=\sup_{x\in 
	U}|s_{p}(x)|_{h^{p}}<+\infty.
	\label{eq:5.2.1}
\end{equation}
For sections $s_{p}$ of unit $\mathcal{L}^{2}$-norm 
an optimal upper bound for $\mathcal{M}^{U}_{p}(s_{p})$ is 
given by the square root of the supremum of Bergman kernel function 
$B_{p}(x,x)$ on $U$. Using the results of \cite{AMM:20} mentioned above, 
one can get an upper bound for $\mathcal{M}^{U}_{p}(s_{p})$ which 
grows as $p^{1/2}$ if $U$ is relatively compact in $\Sigma$, and as 
$p^{3/4}$ otherwise. Note that for the case of cusp forms on 
arithmetic surfaces (see Section \ref{section4}), 
$\mathcal{M}^{U}_{p}(s_{p})$ has its own interest and such upper 
bounds are also obtained by other methods; we refer to 
\cite{FJK2016,Rudnick2005} for more 
details. 

Our first main result concerns the expectation and concentration properties for the random 
variables $\mathcal{M}^{U}_{p}(s_{p})$. 

\begin{theorem}\label{thm:1.3.2}
Let $\Sigma$ and $(L,h)$ be a punctured Riemann surface
and a line bundle satisfying conditions ($\alpha$) and ($\beta$)
and $\Upsilon_p$ the measures considered in Section \ref{section1.3prob}.
	Let $U$ be an open subset of $\Sigma$ with $\partial U$ 
	having zero measures in $\Sigma$. Then there exists a constant  
	$C_{U}>0$ such that for all $p\in\bN$ we have
	\begin{equation}
		\frac{1}{C_{U}}\,p^{-2}\leq 
		\mathbb{E}[\mathcal{M}^{U}_{p}(s_{p})]\leq C_{U}\,p^{9/4} \,. 
		\label{eq:1.4.2paris}
	\end{equation}
	For any $\delta>0$, there exists a constant $C_{U,\delta}>0$ 
	such that for any $p\in\bN$ we have
	\begin{equation}
		\Upsilon_{p}(\{s_{p}\;:\; |\log{\mathcal{M}^{U}_{p}(s_{p})}|\geq 
		\delta p\})\leq e^{-C_{U,\delta}p^{2}}\,.
		\label{eq:5.2.2}
	\end{equation}
\end{theorem}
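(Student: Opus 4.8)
The plan is to control $\mathcal{M}^U_p(s_p)$ from above and below by comparing it to the $\mathcal{L}^2$-norm $\|s_p\|_{\mathcal{L}^2}$ of the random section, exploiting the Bergman kernel estimates of \cite{AMM:20}. The key deterministic fact is that for any $s_p\in H^0_{(2)}(\Sigma,L^p)$ and any $x\in\Sigma$ one has $|s_p(x)|_{h^p}\le B_p(x,x)^{1/2}\,\|s_p\|_{\mathcal{L}^2}$, and conversely, by the reproducing property, $\sup_{x\in U}B_p(x,x)^{1/2}$ is itself attained (up to a constant) as $|s_p(x)|_{h^p}$ for a suitable normalized section supported near the maximizing point. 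By \cite{AMM:20}, $\sup_{\Sigma}B_p(\cdot,\cdot)\asymp p^{3/2}$, while on a relatively compact set $B_p\asymp p$; in either case $\sup_U B_p(x,x)$ is bounded above and below by positive powers of $p$. Hence up to polynomial-in-$p$ factors, $\mathcal{M}^U_p(s_p)$ behaves like $\|s_p\|_{\mathcal{L}^2}$, and the whole theorem reduces to two statements about the random scalar $\|s_p\|_{\mathcal{L}^2}^2=\sigma_p^2\sum_{j}|\eta^p_j/\sigma_p|^2$-type quantities.

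First I would establish \eqref{eq:1.4.2paris}. For the upper bound, write $\mathbb{E}[\mathcal{M}^U_p(s_p)]\le C\,(\sup_U B_p)^{1/2}\,\mathbb{E}[\|s_p\|_{\mathcal{L}^2}]$; since $\mathbb{E}[\|s_p\|_{\mathcal{L}^2}]\le \mathbb{E}[\|s_p\|_{\mathcal{L}^2}^2]^{1/2}=(\sigma_p^2 d_p)^{1/2}$ and $d_p=p\deg(L)+\chi(\Sigma)=O(p)$, $\sigma_p^2\ge c_0$ with the variance uniformly bounded (the upper bound on $\sigma_p^2$ for fixed sequence; more carefully one uses the moment bound \eqref{eq:5.1.3} to control the second moment), one gets $\mathbb{E}[\|s_p\|_{\mathcal{L}^2}]=O(p^{1/2})$, and combined with $(\sup_U B_p)^{1/2}=O(p^{3/4})$ this yields the $p^{5/4}$ rate — better than the claimed $p^{9/4}$, so there is slack. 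For the lower bound, fix $x_0\in U$ and use $|s_p(x_0)|_{h^p}\le \mathcal{M}^U_p(s_p)$; one shows $\mathbb{E}[|s_p(x_0)|_{h^p}]\ge c\,B_p(x_0,x_0)^{1/2}$ by expanding $s_p$ in coherent-state coordinates and using that the coefficient along the normalized peak section at $x_0$ has expected modulus bounded below (here the uniform density bound \eqref{eq:5.1.2} and variance lower bound \eqref{eq:5.1.1} give $\mathbb{E}[|\eta|]\ge c'>0$). Since $B_p(x_0,x_0)\ge c\,p$ away from cusps (or at least a positive power of $p$ in general), this gives a lower bound of order $p^{1/2}$, again stronger than the stated $p^{-2}$.

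Next, for the concentration estimate \eqref{eq:5.2.2}, it suffices to show that both tails $\Upsilon_p(\mathcal{M}^U_p(s_p)\ge e^{\delta p})$ and $\Upsilon_p(\mathcal{M}^U_p(s_p)\le e^{-\delta p})$ decay like $e^{-C p^2}$. For the upper tail: $\mathcal{M}^U_p(s_p)\le (\sup_U B_p)^{1/2}\|s_p\|_{\mathcal{L}^2}\le C p^{3/4}\|s_p\|_{\mathcal{L}^2}$, so the event forces $\|s_p\|_{\mathcal{L}^2}\ge c\,e^{\delta p}$, hence $\max_j|\eta^p_j|\ge c\,e^{\delta p}/\sqrt{d_p}$; by a union bound over the $d_p=O(p)$ indices and Markov's inequality applied with the $d_p$-th moment bound \eqref{eq:5.1.3}, $\P(|\eta^p_j|\ge t)\le C_0(d_p/t)^{d_p}$, which with $t=c\,e^{\delta p}/\sqrt{d_p}$ is $\le C_0(C p^{3/2}e^{-\delta p})^{d_p}=e^{-\Omega(p^2)}$ since $d_p\asymp p$ and $e^{-\delta p}p^{3/2}\to 0$. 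For the lower tail: on the event $\mathcal{M}^U_p(s_p)\le e^{-\delta p}$, using the pointwise lower bound at a fixed $x_0\in U$ together with the fact that a fixed normalized section $S$ peaked at $x_0$ has $|S(x_0)|_{h^p}\ge c\,p^{1/2}$ (or $\ge c\,p^{1/4}$ near a cusp), the corresponding coefficient $\eta^p_{j_0}$ (in a basis chosen adapted to $x_0$, using basis-independence considerations or a fixed such basis) must satisfy $|\eta^p_{j_0}|\le C e^{-\delta p}$; but this probability is at most $M_0\cdot \pi (C e^{-\delta p})^2=O(e^{-2\delta p})$ for a single index — which is only exponential, not $e^{-Cp^2}$. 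To upgrade to $e^{-Cp^2}$, one instead observes that $\mathcal{M}^U_p(s_p)$ small forces $|s_p(x)|_{h^p}$ small at $d_p$-many suitably separated points $x_1,\dots,x_{d_p}$ in $U$ whose peak sections form an approximate basis (Bergman-kernel off-diagonal decay, from \cite{AMM:20}, makes the Gram matrix close to the identity), so that all $d_p$ coefficients of $s_p$ in that basis are $\le e^{-c\delta p}$ in modulus; the density bound \eqref{eq:5.1.2} then gives probability $\le (M_0\pi e^{-2c\delta p})^{d_p}=e^{-\Omega(p^2)}$.

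The main obstacle I anticipate is the lower-tail part of \eqref{eq:5.2.2}: turning ``$\sup_U|s_p|$ is exponentially small'' into ``all $d_p$ coordinates of $s_p$ in some orthonormal basis are exponentially small,'' which requires choosing $d_p$ well-separated points in $U$ whose associated normalized Bergman peak sections are quantitatively linearly independent, uniformly in $p$ — this uses the full strength of the on- and off-diagonal Bergman kernel asymptotics of \cite{AMM:20}, including the delicate behavior near the cusps, and one must ensure enough such points fit inside a possibly small open $U$ (which is where $\partial U$ having zero measure, hence $U$ having positive measure, enters, giving room for $\sim p\cdot\mathrm{Area}(U)$ separated points at the natural scale $p^{-1/2}$). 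The remaining steps — the moment/union-bound argument for the upper tail and the first-moment estimates for \eqref{eq:1.4.2paris} — are routine given the probabilistic assumptions of Section \ref{section1.3prob}.
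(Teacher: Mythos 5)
Your overall strategy parallels the paper's (Bergman kernel sup bounds plus moment estimates for the upper tail; a lattice of well-separated points with Bergman-kernel near/off-diagonal control for the lower tail), and the upper-tail and expectation upper-bound arguments are essentially correct and close to Proposition \ref{prop:5.2.2} and Corollary \ref{cor:3.1.6}. However, the lower-tail argument has a real gap, and it is exactly the point where the non-Gaussian assumptions bite.

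You write that ``all $d_p$ coefficients of $s_p$ in that basis are $\le e^{-c\delta p}$ in modulus; the density bound \eqref{eq:5.1.2} then gives probability $\le (M_0\pi e^{-2c\delta p})^{d_p}$.'' But \eqref{eq:5.1.2} bounds the densities of the \emph{original, independent} coordinates $\eta^p_j$ in the fixed orthonormal basis $O_p$. The coefficients of $s_p$ along normalized peak sections at your lattice points are certain \emph{linear combinations} of the $\eta^p_j$; they are correlated, and there is no a priori density bound for the resulting random vector. Moreover, the number $n$ of lattice points you can place is only $\simeq d_p$, not exactly $d_p$, so there is no square change of variables and Jacobian to invoke. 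The paper resolves this with Proposition \ref{prop:boundmarginal}: a bound $\sup g^p_V\le M_0^n\binom{d_p}{n}$ on the marginal density of the projection of $(\eta^p_j)_j$ onto any $n$-dimensional subspace $V\subset\C^{d_p}$, proved via the Cauchy--Binet identity applied to the orthonormal frame of $V^\perp$. After replacing the sampled values $\xi_v$ by the decorrelated $\zeta=\Delta^{-1/2}\xi$ (using the covariance control from Theorem \ref{thm:1.3.1}), this marginal bound yields an admissible joint density $\le(M_0\sigma_p^2)^n\binom{d_p}{n}$, and since $\binom{d_p}{n}\le d_p!=e^{O(p\log p)}$ the extra factor is harmless against $e^{-\Omega(p^2)}$. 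Without something playing the role of Proposition \ref{prop:boundmarginal}, the product bound you wrote is not justified.

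Two smaller points. For the lower bound on $\mathbb{E}[\mathcal{M}^U_p(s_p)]$, your claim that the peak coefficient has $\mathbb{E}[|\eta|]\ge c'>0$ directly from \eqref{eq:5.1.2} and \eqref{eq:5.1.1} is not available: the peak coefficient is a linear combination $\zeta=\sum_j\beta_j\eta^p_j$ with $\sum|\beta_j|^2=1$, whose density can be as large as $M_0 d_p$ by Proposition \ref{prop:boundmarginal}, so concentration at scale $d_p^{-1/2}$ cannot be ruled out; the paper instead goes through $\mathbb{E}[|s_p(x_0)|^2]=\sigma_p^2 B_p(x_0,x_0)$ and a H\"older interpolation against the $d_p$-th moment, which degrades the rate (hence only $p^{-2}$, or $p^{-7/4-\epsilon}$ after Remark \ref{rm:3.1.4paris}). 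Second, for the lower tail the paper restricts to a small cube inside a relatively compact $U'\subset U$ away from the cusps, so that the uniform near/off-diagonal estimates of Theorem \ref{thm:1.3.1} apply with a fixed $\varepsilon_0$; your sketch gestures at $U$ possibly meeting the cusp region, where $B_p$ is non-uniform ($p^{3/2}$ spike) and the Gram/covariance control is more delicate. Restricting to a compact sub-cube is both sufficient and cleaner.
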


For a holomorphic line bundle $E\to\Sigma$ 
and a holomorphic
section $s\in H^0(\Sigma,E)$ 
which is not identically zero 
we denote by $\Div(s)=\sum_{s(x)=0}m_x\cdot x$
the divisor of zeros of $s$, where the sum runs over the zeros $x\in\Sigma$
of $s$ and $m_x=\ord_x(s)$ is the multiplicity of $s$ at $x$.
Note that the zero set of $s$ is closed and discrete, due to the identity
theorem for holomorphic functions.

If $s\in H^{0}(\Sigma,E),$ we define the measure of zeros of $s$ by
\begin{equation}
	[\Div(s)]=\sum_{x\in\Sigma,
	s(x)=0}m_x\,\delta_x\,.
\end{equation}
In view of the higher dimensional case we note that
$[\Div(s)]$ can be identified with a $(1,1)$-current on $\Sigma$. If 
$(\mu_{p})_{p\in\bN}$ is a sequence of $(1,1)$-currents (or measures) 
on $\Sigma$, we say that 
it converges weakly to a $(1,1)$-current $\mu$ on $\Sigma$, if

\begin{equation}\label{eq:1.4.4DLMbis}
\lim_{p\rightarrow\infty}(\mu_{p},\varphi)
=(\mu,\varphi )\quad\text{for all $\varphi\in 
\mathcal{C}_{0}^{\infty}(\Sigma)$}\,, 
\end{equation} 
 where $\mathcal{C}_{0}^{\infty}(\Sigma)$ denotes the space of smooth
 compactly supported functions on $\Sigma$.

Now we go back to our setting where $E=L^{p}, p=1, 2, \cdots$, and 
$s_{p}\in H^{0}_{(2)}(\Sigma, L^{p})$.
If $U\subset \Sigma$ is an open set, we write 
\begin{equation}
	\mathcal{N}^{U}_{p}(s_{p})=\int_U[\Div(s_{p})]
\end{equation}
to denote the 
number of zeros (with multiplicities) of $s_{p}$ in $U$, and $\mathrm{Area}^{L}(U)$ to denote the area of $U$ defined by the 
measures $iR^{L}$. As a consequence of Assumption \ref{item:beta} we have that 
$\mathrm{Area}^{L}(U)$ is finite.

Next we will apply the results in Theorem 
\ref{thm:1.3.2}, using essentially the well-known Poincar\'{e}-Lelong 
formula  (cf.\ \eqref{eq:1.3.10} below), to study the zeros of random 
holomorphic section $s_{p}$. In particular, 
we can infer an upper bound for the hole probabilities. 
Using Borel-Cantelli type arguments we then also obtain the almost sure convergence of zeros of sequences
of holomorphic sections.
For this purpose let us introduce 
the product probability space
 \begin{equation}
 	(\mathcal{H},\Upsilon)=\prod_{p=1}^{\infty} (H^{0}_{(2)}(\Sigma, 
 	L^{p}),\Upsilon_{p}).
\end{equation}
 An element in $\mathcal{H}$ is a sequence $(s_{p})_{p\in\bN}$, 
 $s_{p}\in H^{0}_{(2)}(\Sigma, L^{p})$.

The results we obtain are stated in the following.
\begin{theorem}\label{thm:1.2.1}
Let $\Sigma$ and $(L,h)$ be a punctured Riemann surface
and a line bundle satisfying conditions ($\alpha$) and ($\beta$)
and $\Upsilon_p$ the measures considered in Section \ref{section1.3prob}.

(a) 
$\Upsilon$-almost surely, we have 
the weak convergence of measures
\begin{equation}\label{eq:1.4.4DLM}
\lim_{p\rightarrow\infty}\frac{1}{p}[\mathrm{Div}(s_{p})]
=c_1(L,h)\:\:\text{on $\Sigma$}\,. 
\end{equation} 	
	
(b) If $U$ is an open set of $\Sigma$ with $\partial U$ having zero
measure in $\Sigma$, then for any $\delta>0$, there exists a 
constant $C_{\delta,U}>0$ such that for $p\gg 0$
the following holds:
\begin{equation}\label{eq:1.4.5DLM}
\Upsilon_{p}\Big(\Big\{s_{p}\;:\; 
\Big|\frac{1}{p}\mathcal{N}^{U}_{p}(s_{p})-
\frac{\mathrm{Area}^{L}(U)}{2\pi}\Big|
>\delta\Big\}\Big)\leq e^{-C_{\delta,U}p^{2}}.
\end{equation}
\end{theorem}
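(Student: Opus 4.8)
\textbf{Proof proposal for Theorem \ref{thm:1.2.1}.}
The plan is to reduce both parts of the statement to Theorem \ref{thm:1.3.2} via the Poincar\'e--Lelong formula. Recall that for a holomorphic section $s_p\in H^0_{(2)}(\Sigma,L^p)$, writing $s_p=f\cdot e^{\otimes p}$ locally in terms of a local frame $e$ of $L$ with $|e|_h^2=e^{-2\phi}$, one has
\begin{equation}
\frac{1}{p}[\Div(s_p)] = c_1(L,h) + \frac{i}{2\pi p}\,\partial\bar\partial \log |s_p|_{h^p}^2 .
\label{eq:1.3.10}
\end{equation}
Consequently, for any test function $\varphi\in\mathcal{C}_0^\infty(\Sigma)$,
\begin{equation}
\Big(\tfrac{1}{p}[\Div(s_p)] - c_1(L,h),\,\varphi\Big)
= \frac{1}{2\pi p}\int_\Sigma \log|s_p|_{h^p}^2 \;\frac{i}{2}\partial\bar\partial\varphi .
\label{eq:PLtest}
\end{equation}
The right-hand side is controlled by $\tfrac{1}{p}\|\log|s_p|_{h^p}^2\|_{L^1(\mathrm{supp}\,\varphi)}$ times a constant depending only on $\varphi$. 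So everything comes down to an upper bound, with overwhelming probability, for the $L^1$-norm of $\log|s_p|_{h^p}$ on a fixed relatively compact set, together with the matching statement on open sets with negligible boundary for part (b).

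First I would treat part (b). Fix $U$ with $\partial U$ of measure zero and pick a slightly larger relatively compact open $U'\Supset \overline U$ (if $U$ is already forced to lie near a cusp one instead exhausts and uses property ($\beta$) to control the cusp neighbourhoods, where $\omega_\Sigma=\omega_{\mathbb{D}^*}$ is explicit). By \eqref{eq:1.3.10} and Stokes' theorem one has, for a cutoff $\chi$ equal to $1$ on $\overline U$ and supported in $U'$,
\begin{equation}
\frac1p\mathcal{N}^U_p(s_p) \;\leq\; \frac1p\int_{U'}\chi\,[\Div(s_p)] \;=\; \int_{U'}\chi\,c_1(L,h) \;+\; \frac{1}{2\pi p}\int_{U'}\log|s_p|_{h^p}^2\,\tfrac{i}{2}\partial\bar\partial\chi ,
\label{eq:Nupper}
\end{equation}
and a symmetric lower bound using a cutoff supported inside $U$. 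Since $\int_U c_1(L,h)=\mathrm{Area}^L(U)/(2\pi)$ and $\partial U$ is negligible, the deterministic main terms converge to $\mathrm{Area}^L(U)/(2\pi)$, so it suffices to show that the error integral is $\leq \delta$ outside an event of probability $\leq e^{-C p^2}$. The upper tail of $\log|s_p|_{h^p}^2$ on $U'$ is exactly $\log(\mathcal{M}^{U'}_p(s_p)^2)$, which is handled by \eqref{eq:5.2.2} of Theorem \ref{thm:1.3.2}. For the lower tail one needs a pointwise-a.e.\ lower bound: the key sub-claim is that $\Upsilon_p(\{\|\log|s_p|_{h^p}\|_{L^1(U')}\geq \delta p\})\leq e^{-Cp^2}$. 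The negative part is the delicate half, because $\log|s_p|_{h^p}$ can be very negative near a zero of $s_p$; however the local integrability of $\log$ of a holomorphic function means $\int_{U'}(\log|s_p|_{h^p})_-\,\omega_\Sigma$ is bounded in terms of the number of zeros in a slightly larger set (hence $\leq C p$ with overwhelming probability by the upper bound just obtained) and the sup-norm on that set. One then combines: on the complement of the bad event, $\mathcal{N}^{U'}_p(s_p)\leq C p$ and $\mathcal{M}^{U'}_p(s_p)\leq e^{\delta' p}$, and a Jensen/potential-theoretic estimate gives $\int_{U'}|\log|s_p|_{h^p}|\,\omega_\Sigma \leq \varepsilon p$. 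This is the step I expect to be the main obstacle: quantifying the negative part of $\log|s_p|$ uniformly, with the correct $e^{-Cp^2}$ probability. The way around it is to note that by \eqref{eq:5.1.2} (bounded densities) the normalized section $s_p/\|s_p\|_{\mathcal{L}^2}$ has the property that $\Upsilon_p(|s_p(x_0)|_{h^p}^2 < \varepsilon \,B_p(x_0,x_0)\,\|s_p\|^2_{\mathcal{L}^2})$ is polynomially small in $\varepsilon$ at a fixed point, and then one integrates this in $x_0$ against $\omega_\Sigma$ over $U'$ and uses Fubini together with the moment/variance conditions \eqref{eq:5.1.1},\eqref{eq:5.1.3} to control $\|s_p\|_{\mathcal{L}^2}$ from below with probability $1-e^{-Cp^2}$, exactly as in the proof of the lower bound in \eqref{eq:1.4.2paris}.

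For part (a), the almost-sure weak convergence \eqref{eq:1.4.4DLM}, I would argue by a Borel--Cantelli argument over the product space $(\mathcal{H},\Upsilon)$. Fix a countable dense family $\{\varphi_k\}$ in $\mathcal{C}_0^\infty(\Sigma)$ (in a suitable $\mathcal{C}^2$-sense); by \eqref{eq:PLtest} and the tail bound $\Upsilon_p(|(\tfrac1p[\Div(s_p)]-c_1(L,h),\varphi_k)|>\varepsilon)\leq e^{-C_{k,\varepsilon}p^2}$ just established, the series $\sum_p e^{-C_{k,\varepsilon}p^2}$ converges, so $\Upsilon$-a.s.\ the pairing against each $\varphi_k$ converges to zero; a density argument in $k$ (using that $\tfrac1p[\Div(s_p)]$ has uniformly locally bounded mass, again by \eqref{eq:Nupper} with the deterministic bound $\int c_1(L,h)<\infty$) upgrades this to all $\varphi\in\mathcal{C}_0^\infty(\Sigma)$, which is \eqref{eq:1.4.4DLMbis}. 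The only care needed is the uniform-mass input, which follows from the already-noted fact that $\mathcal{N}^{U'}_p(s_p)/p$ is bounded above with overwhelming probability, so another Borel--Cantelli gives an a.s.\ (random) uniform bound. This completes the reduction; the probabilistic heavy lifting is entirely inherited from Theorem \ref{thm:1.3.2}, and the genuinely new work is the two-sided $L^1$-control of $\log|s_p|_{h^p}$, with the negative part being the crux.
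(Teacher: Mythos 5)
Your overall strategy—reduce to Theorem \ref{thm:1.3.2} via Poincar\'e--Lelong, control the $L^{1}$-norm of $\log|s_p|_{h^p}$, and treat the negative part as the crux—is exactly the route the paper takes through Proposition \ref{prop:1.3.3} and Theorem \ref{thm:1.3.5}, and your Borel--Cantelli deduction of part (a) also matches. However there are two genuine gaps.

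First, you do not address the cusps. For $p\geq 2$ every $s_p\in H^0_{(2)}(\Sigma,L^p)$ vanishes at every $a_j\in D$, so $[\Div_{\overline\Sigma}(s_p)]$ and $[\Div(s_p)]$ on $\Sigma$ differ by the point masses $\sum_j \ord_{a_j}(s_p)\,\delta_{a_j}$, and these orders are unbounded over the finite-dimensional space. The paper controls them with Lemma \ref{lm:1.3.6}: with $\Upsilon_p$-probability one (using positivity of $L$ on $\overline\Sigma$ together with the existence of a fixed nonvanishing section at $a_j$) the orders are uniformly bounded by some $k_0$, and the cutoffs $\psi_1,\psi_2$ are then chosen smooth on $\overline\Sigma$ and locally constant near $D$ so that $\partial\bar\partial\psi_i$ vanishes identically there. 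This matters precisely because, as the paper shows in \eqref{eq:3.3.4bis} and the surrounding computation, $\big|\log|s_p|_{h^p}\big|$ is \emph{not} $\omega_\Sigma$-integrable near the punctures—so your proposal to "exhaust and use property ($\beta$)" when $U$ is not relatively compact cannot work as stated: the relevant integral diverges. Your construction of a relatively compact $U'\Supset\overline U$ likewise breaks down when $U$ contains a cusp neighbourhood, which is allowed in the statement of part (b).

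Second, the alternative you propose for the negative part—integrating the pointwise small-ball probability $\Upsilon_p\big(|s_p(x_0)|_{h^p}^2<\varepsilon B_p(x_0,x_0)\|s_p\|_{\mathcal L^2}^2\big)$ over $x_0$ and applying Fubini—does not yield the required tail. Fubini controls $\mathbb{E}\big[\int_{U'}|\log|s_p||\,\omega_\Sigma\big]$, and Markov then gives a polynomial bound, far short of $e^{-Cp^2}$. You need the Poisson--Jensen mechanism you mention in passing: cover by annuli, use the sub-mean-value inequality for $\log|f_p|$ to transfer a lower bound at finitely many points into a two-sided $L^1$ bound on the whole annulus, and apply Theorem \ref{thm:1.3.2} both for the upper tail of $\mathcal M^{U'}_p$ and for the lower tail at those finitely many points. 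This is what \cite[Subsection 4.1]{SZZ} and the paper's Proposition \ref{prop:1.3.3} do; the Fubini route is not a sound substitute. The rest of the outline (deterministic sandwiching by cutoffs, Borel--Cantelli for part (a)) is fine once these two points are repaired.
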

We will give a quick proof of item (a) by using Theorem 
\ref{thm:1.3.5}. It follows actually from
\cite[Theorem 5.1]{CM11} that the convergence of currents 
in (a) takes place on $\overline{\Sigma}$.
Our emphasis here is on item (b).
As a consequence of \eqref{eq:1.4.5DLM}, choosing 
$\delta=\mathrm{Area}^{L}(U)/2\pi$ we 
infer the following estimates on the hole probabilities.
\begin{corollary}\label{C:1.2.2}
	If $U$ is a nonempty open set of $\Sigma$ with $\partial U$ having zero
	measure in $\Sigma$, then there exists a 
	constant $C_{U}>0$ such that for $p\gg 0,$
	\begin{equation}
		\Upsilon_{p}(\{s_{p}\;:\; \mathcal{N}^{U}_{p}(s_{p})=0\})
		\leq e^{-C_{U}p^{2}}.
	\end{equation}
\end{corollary}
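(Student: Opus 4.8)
The plan is to derive Corollary \ref{C:1.2.2} directly from Theorem \ref{thm:1.2.1}(b) by a judicious choice of the parameter $\delta$. First I would observe that the event $\{s_p : \mathcal{N}^U_p(s_p)=0\}$ is contained in the event $\{s_p : |\frac{1}{p}\mathcal{N}^U_p(s_p) - \frac{\mathrm{Area}^L(U)}{2\pi}| \geq \frac{\mathrm{Area}^L(U)}{2\pi}\}$, since if the number of zeros in $U$ vanishes then the left-hand quantity equals $\frac{\mathrm{Area}^L(U)}{2\pi}$ exactly. The key point making this nontrivial is that $U$ is \emph{nonempty} and open and that $iR^L \geq \varepsilon_0 \omega_\Sigma > 0$ by Assumption \ref{item:beta}, so the area $\mathrm{Area}^L(U) = \int_U iR^L$ is strictly positive (and finite, as noted in the excerpt). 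Hence the number $\delta_0 := \mathrm{Area}^L(U)/(2\pi)$ is a genuinely positive constant depending only on $U$ (and the fixed geometric data).

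With this in hand, I would apply Theorem \ref{thm:1.2.1}(b) with this specific value $\delta = \delta_0 > 0$. The slight subtlety is that the event in \eqref{eq:1.4.5DLM} is a strict inequality $|\cdots| > \delta$, whereas the containment above gives a non-strict inequality $|\cdots| \geq \delta_0$. To handle this cleanly I would instead apply Theorem \ref{thm:1.2.1}(b) with $\delta = \delta_0/2$, so that
\begin{equation*}
\{s_p : \mathcal{N}^U_p(s_p)=0\} \subset \Big\{s_p : \Big|\tfrac{1}{p}\mathcal{N}^U_p(s_p) - \tfrac{\mathrm{Area}^L(U)}{2\pi}\Big| > \tfrac{\delta_0}{2}\Big\},
\end{equation*}
and then monotonicity of $\Upsilon_p$ together with \eqref{eq:1.4.5DLM} yields
\begin{equation*}
\Upsilon_p(\{s_p : \mathcal{N}^U_p(s_p)=0\}) \leq e^{-C_{\delta_0/2,\, U}\, p^2}
\end{equation*}
for all sufficiently large $p$, where $C_{\delta_0/2, U}$ is the constant supplied by Theorem \ref{thm:1.2.1}(b). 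Setting $C_U := C_{\delta_0/2, U}$ gives exactly the claimed bound.

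There is essentially no main obstacle here: the corollary is a formal consequence of the preceding theorem once one notes positivity of $\mathrm{Area}^L(U)$. The only thing to be a little careful about is the bookkeeping between strict and non-strict inequalities (resolved by halving $\delta$) and recording that the resulting constant depends only on $U$ and the fixed data $M_0, c_0, C_0$, not on $p$. The statement ``for $p \gg 0$'' is inherited verbatim from Theorem \ref{thm:1.2.1}(b).
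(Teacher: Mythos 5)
Your proof is correct and follows essentially the same route as the paper, which simply chooses $\delta = \mathrm{Area}^L(U)/(2\pi)$ in \eqref{eq:1.4.5DLM}; you are if anything a bit more careful in halving $\delta$ to turn the strict inequality in the theorem into the non-strict containment needed here. The only substantive observation required -- that $\mathrm{Area}^L(U)>0$ because $U$ is nonempty open and $iR^L \geq \varepsilon_0\,\omega_\Sigma>0$ -- is present in your write-up.
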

Note that in the above statements, we can take $U$ to be noncompact in 
$\Sigma$, i.e., an open neighborhood of the punctured points. In particular, for  the 
cusped hyperbolic surfaces investigated in Section 
\ref{section4}, our results can be used to study the zeros of cusp 
forms near cusps.

Moreover, in the case of Gaussian ensembles, 
we also have a lower bound estimate for the 
hole 
probabilities of matching exponential order for Corollary \ref{C:1.2.2}. 
\begin{proposition}\label{prop:1.2.3}	
	Suppose that $\{\Upsilon_{p}\}_{p\in\bN}$ is defined as in 
	Example \ref{eg:1.3.2} with $\sigma_{p}=1$. If $U$ is a relatively compact open subset of $\Sigma$ such that 
	$\partial U$ has zero measure in $\Sigma$, and if there exists a 
	section $\tau\in H^{0}_{(2)}(\Sigma, L)$ such that it does not 
	vanish in $\overline{U}\subset \Sigma$, then there 
	exists $C'_{U}>0$ such that for $p\gg 0$,
	\begin{equation}
		\Upsilon_{p}(\{s_{p}\;:\; \mathcal{N}^{U}_{p}(s_{p})=0\})\geq 
		e^{-C'_{U}p^{2}}.
		\label{eq:1.2.4}
	\end{equation}
	
	Fix an integer $k_{0}\geq 2$. For each $a_{j}\in D$, there exists 
	$r_{j}\in\; ]0,\frac{1}{2}[$ 
	and $\tau_{j}\in H^{0}_{(2)}(\Sigma, L^{k_{0}})$
	such that $\tau_{j}$ has no zeros in 
	$\mathbb{D}^{*}_{2r_{j}}\subset V_{j}$ described in Assumption 
	\ref{item:beta}. For $0<r<r_{j}$, set $\mathbb{D}(r,r_{j})=\{z\in 
	\C\;:\; r< |z|< r_{j}\}\subset\mathbb{D}^{*}_{r_{j}}\subset 
	V_{j}$. Then there exists $c_{j}>0$ such that for $0<r<r_{j}$, we have
	\begin{equation}
		\Upsilon_{pk_{0}}\big(\big\{s_{pk_{0}}\;:\; 
		\mathcal{N}^{\mathbb{D}(r,r_{j})}_{pk_{0}}(s_{pk_{0}})=0\big\}\big)\geq 
		e^{- c_{j}|\log{r}|p^{2}}=r^{c_{j}p^{2}},\;\forall\, p\gg 0.
		\label{eq:1.2.5}
	\end{equation}
\end{proposition}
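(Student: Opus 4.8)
The plan is to produce, for each target event "$s_p$ has no zeros in $U$" (resp. in $\mathbb{D}(r,r_j)$), an explicit subset of coefficient vectors whose image in $H^0_{(2)}(\Sigma,L^p)$ lies in the hole event, and then to lower-bound the Gaussian mass of that subset. The geometric mechanism is standard: if $\tau\in H^0_{(2)}(\Sigma,L)$ is nonvanishing on $\overline U$, then $\tau^{\otimes p}\in H^0_{(2)}(\Sigma,L^p)$ is nonvanishing on $\overline U$, and if a general section $s_p$ is uniformly close to $\tau^{\otimes p}$ on $\overline U$ relative to $\min_{\overline U}|\tau^{\otimes p}|_{h^p}$, then $s_p$ also has no zeros there by the argument principle / Rouché (or simply $|s_p - \tau^{\otimes p}|_{h^p} < |\tau^{\otimes p}|_{h^p}$ pointwise forces $s_p\neq 0$). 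So the hole event contains
\begin{equation}
\Big\{ s_p : \sup_{\overline U}|s_p - \lambda_p\tau^{\otimes p}|_{h^p} < \min_{\overline U}|\lambda_p\tau^{\otimes p}|_{h^p}\Big\}
\end{equation}
for any scalar $\lambda_p\neq 0$; choosing $\lambda_p$ of a suitable (possibly exponentially large or small in $p$) size to compensate for how $\tau^{\otimes p}$ scales, we reduce to lower-bounding the probability of a sup-norm ball around a fixed section.

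First I would make the sup-norm ball comparison effective. Write $\tau^{\otimes p} = \sum_j c^{(p)}_j S^p_j$ in the chosen orthonormal basis; then $\|\tau^{\otimes p}\|_{\mathcal L^2}$ and the $\mathcal L^2$-distance of $s_p$ to $\lambda_p\tau^{\otimes p}$ control, via the Bergman-kernel reproducing bound $|u(x)|_{h^p}^2 \le B_p(x,x)\|u\|_{\mathcal L^2}^2$ together with the estimate $\sup_{\overline U}B_p(x,x)\le C_U p^{3/2}$ (the relatively compact case even gives $C_U p$) from \cite{AMM:20} as used in Theorem \ref{thm:1.3.2}, the sup-norm on $\overline U$. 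Hence it suffices to ask the coefficient vector $(\eta^p_j)$ to lie within $\mathcal L^2$-distance $\rho_p$ of the fixed vector $\lambda_p(c^{(p)}_j)$, where $\rho_p$ is chosen so that $C_U p^{3/4}\rho_p$ is below $\min_{\overline U}|\lambda_p\tau^{\otimes p}|_{h^p}$ — and by compactness of $\overline U$ and nonvanishing of $\tau$, this minimum is bounded below by $e^{-C_U' p}$ after the rescaling, so one may take $\rho_p = e^{-C_U''p}$. For the annulus statement \eqref{eq:1.2.5} the same scheme applies with $U = \mathbb{D}(r,r_j)$ and $\tau = \tau_j^{\otimes p}$ (powers inside $L^{k_0}$, hence $s_{pk_0}$), except that now $\overline U$ touches the puncture as $r\to 0$ and one must track the $r$-dependence: in the coordinate $z_j$ with $|1|_h^2 = |\log|z_j|^2|$, the minimum of $|\tau_j^{\otimes p}|_{h^{pk_0}}$ over $\mathbb{D}(r,r_j)$ and the Bergman bound together contribute factors that are polynomial in $|\log r|$, yielding a lower bound of the form $e^{-c_j|\log r|\,p^2}$.

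Next I would estimate the Gaussian probability of the event $\{\sum_j|\eta^p_j - a^p_j|^2 \le \rho_p^2\}$ where $(a^p_j) = \lambda_p(c^{(p)}_j)$ is the fixed center and the $\eta^p_j$ are i.i.d.\ standard complex Gaussians ($\sigma_p=1$). By translation this is $\P(\|\zeta - a^p\|^2 \le \rho_p^2)$ for a standard complex Gaussian vector $\zeta\in\mathbb C^{d_p}$. A crude but sufficient bound: the density of $\zeta$ on a ball of radius $\rho_p$ around $a^p$ is at least $\pi^{-d_p}e^{-(\|a^p\|+\rho_p)^2}$, and the ball has volume $\pi^{d_p}\rho_p^{2d_p}/d_p!$, so
\begin{equation}
\P\big(\|\zeta - a^p\|\le\rho_p\big) \ge \frac{\rho_p^{2d_p}}{d_p!}\,e^{-(\|a^p\|+\rho_p)^2}.
\end{equation}
Now $d_p = p\deg(L)+\chi(\Sigma) = O(p)$ by \eqref{eq:dpL}, $\|a^p\| = |\lambda_p|\,\|\tau^{\otimes p}\|_{\mathcal L^2} \le e^{O(p)}$ after the rescaling, $\rho_p = e^{-O(p)}$, and $d_p!\le d_p^{d_p} = e^{O(p\log p)}$. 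Taking logarithms, $\log\P \ge -O(p\log p) - e^{O(p)}$; the dominant term is $-(\|a^p\|+\rho_p)^2 = -e^{O(p)}$, which is crude. To get the sharp $e^{-C_U'p^2}$ I would instead choose the scaling $\lambda_p$ so that $\|a^p\| = O(1)$ (normalizing $\tau^{\otimes p}$ in $\mathcal L^2$) — this is legitimate since rescaling a section does not change its zero set — at the cost that $\min_{\overline U}|\tau^{\otimes p}/\|\tau^{\otimes p}\|\,|_{h^p}$ may now be as small as $e^{-C_Up}$ (it cannot be smaller: on the compact set $\overline U$, $\log|\tau|_{h^p}^2 = p\log|\tau_1|_h^2 + O(1)$ with $\log|\tau_1|_h^2$ bounded, and dividing by $\|\tau^{\otimes p}\|_{\mathcal L^2}^2$ which is also $e^{O(p)}$). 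Then $\rho_p$ must be taken $\le e^{-C_Up}/(C_U p^{3/4}) = e^{-C_U'p}$, so $\rho_p^{2d_p} = e^{-O(p^2)}$, while $d_p!\le e^{O(p\log p)} = e^{o(p^2)}$ and $e^{-(\|a^p\|+\rho_p)^2} = e^{-O(1)}$. Hence $\log\P \ge -O(p^2)$, which is \eqref{eq:1.2.4}. The annulus case is identical after replacing the $e^{O(p)}$ bounds on $\min|\tau_j^{\otimes p}|$ and $\|\tau_j^{\otimes p}\|_{\mathcal L^2}$ by $e^{O(|\log r|\,p)}$, giving $\rho_{pk_0}^{2d_{pk_0}} = e^{-O(|\log r|\,p^2)}$.

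The main obstacle I anticipate is the bookkeeping of the $r$-dependence near the puncture for \eqref{eq:1.2.5}: one must verify that $\min_{\mathbb{D}(r,r_j)}|\tau_j^{\otimes p}|_{h^{pk_0}}$ degrades at most like a negative power of $|\log r|$ raised to the power $p$ (and not faster), which requires using the explicit form $|1|_h^2 = |\log|z_j|^2|$ and the nonvanishing of $\tau_j$ on $\mathbb{D}^*_{2r_j}$ to write $\log|\tau_j^{\otimes p}|_{h^{pk_0}}^2 = p\log|g|^2 + pk_0\log|\log|z_j|^2|$ with $g$ a nonvanishing holomorphic function bounded above and below on $\mathbb{D}^*_{2r_j}\setminus\{0\}$ away from $0$ — one has to be slightly careful that $g$ itself stays bounded below as $z_j\to 0$, which holds because $\tau_j\in H^0_{(2)}$ forces $g$ to extend holomorphically and nonvanishingly across $0$ (as $k_0\ge 2$, sections in $H^0_{(2)}$ vanish on $D$, so in fact one should write the local model slightly differently, absorbing the vanishing order into the $|\log|z_j|^2|$ factor). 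Matching this against the Bergman-kernel growth $\sup_{\mathbb{D}(r,r_j)}B_{pk_0}(x,x) = O((pk_0)^{3/2}(\log\frac{1}{r})^{?})$ near the cusp — whose precise cusp-uniform form is exactly the content of \cite{AMM:20} quoted before Theorem \ref{thm:1.3.2} — is the one place where the punctured geometry genuinely enters, and where I would need to be most careful to extract a clean $|\log r|\cdot p^2$ exponent rather than something worse.
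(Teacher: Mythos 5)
Your overall strategy matches the paper's: compare a random $s_p$ against a fixed nonvanishing reference section $\tau^{\otimes p}$ (suitably normalized), use the pointwise Bergman bound $|u(x)|_{h^p}\le B_p(x,x)^{1/2}\|u\|_{\mathcal L^2}$ together with the uniform $B_p(x,x)=\mathcal O(p^{3/2})$ from \cite{AMM:20}, and lower-bound the Gaussian probability of a small neighbourhood of $\tau^{\otimes p}$ in coefficient space. The two proofs diverge only in how that last probability is estimated. The paper uses Gaussian rotation invariance to complete $E_1^{pk_0}:=\tau_j^{\otimes p}/\|\tau_j^{\otimes p}\|_{\mathcal L^2}$ to an orthonormal basis and then computes a product event $\{|\xi_1|>1,\ |\xi_j|<t_p(r)\ (j\ge 2)\}$, whose probability factorizes explicitly. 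You instead bound the Gaussian mass of a Euclidean ball $\|\zeta - a^p\|\le\rho_p$ around the centre by density times volume, getting $\rho_p^{2d_p}/d_p!$. Both give the same $-\mathcal O(p^2)$ exponent; the paper's version is a touch cleaner because there is no $d_p!$ bookkeeping.

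Two concrete issues in your treatment of \eqref{eq:1.2.5}. First, the displayed formula $\log|\tau_j^{\otimes p}|^2_{h^{pk_0}}=p\log|g|^2+pk_0\log|\log|z_j|^2|$ is missing the term $2pm_j\log|z_j|$ coming from the vanishing order $m_j=\ord_{a_j}(\tau_j)\ge 1$ (for $k_0\ge 2$ every $\mathcal L^2$ section vanishes at the cusps, so $m_j\ge 1$). Your proposed patch, ``absorbing the vanishing order into the $|\log|z_j|^2|$ factor'', does not make sense — $|z_j|^{m_j}$ and $|\log|z_j|^2|$ are independent functions and cannot be traded off. The correct accounting, which is exactly what the paper's Lemma \ref{lm:3.6.1} does, is to write $\tau_j=z_j^{m_j}g(z_j)\,1^{\otimes k_0}$ with $g(0)\neq 0$, so that $\inf_{\mathbb{D}(r,r_j)}|\tau_j|_{h^{k_0}}\ge r^{m_j}\cdot v_j\cdot|\log r_j^2|^{k_0/2}$ and hence $b(r)\le m_j|\log r|+\mathcal O(1)$. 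This $r^{m_j}$ factor is precisely the source of the $|\log r|$ in the exponent; without the vanishing-order term you have no mechanism to produce it. Second, and relatedly, your earlier assertion that ``the minimum of $|\tau_j^{\otimes p}|_{h^{pk_0}}$ over $\mathbb{D}(r,r_j)$ and the Bergman bound together contribute factors that are polynomial in $|\log r|$'' is wrong on both counts: the minimum decays like $r^{m_jp}=e^{-m_jp|\log r|}$, exponential in $|\log r|$, not polynomial; and the Bergman bound \eqref{eq:2.1.3} is uniform over all of $\Sigma$ with no $r$-dependence at all, so your ``$(\log\tfrac1r)^{?}$'' placeholder is an artefact of not yet having invoked the uniform cusp estimate. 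Your final paragraph then simply asserts the correct bound $e^{\mathcal O(|\log r|p)}$ without having derived it, so there is a genuine gap here: Lemma \ref{lm:3.6.1} (or an equivalent local computation) is needed to close the argument for \eqref{eq:1.2.5}. The argument for \eqref{eq:1.2.4} is complete as you wrote it.
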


In the next 
subsection we provide some intermediate results, which are of independent interest and which will play an important role on our way to proving the results given above.

\subsection{Intermediate results: an approach to Theorem 
\ref{thm:1.2.1}}\label{subs1.3}
The normalized Bergman kernel is defined as
\begin{equation}
	P_{p}(x,y)=\frac{|B_{p}(x,y)|_{h^{p}_{x}\otimes 
	h_{y}^{p,\ast}}}{\sqrt{B_{p}(x,x)}\sqrt{B_{p}(y,y)}}, \quad  x,y\in \Sigma.
	\label{eq:1.3.1}
\end{equation}
A near-diagonal estimate on $P_{p}(x,y)$ plays a central role in our 
computations. In the case of compact K\"{a}hler 
manifolds, such results were established in \cite[Propositions 2.6 and 2.7]{SZ08} as well as in \cite[Proposition 2.1]{SZZ}. In our setting, we 
will take advantage of the Bergman kernel expansion for 
complete, possibly noncompact, Hermitian manifolds obtained by Ma and 
Marinescu in \cite[Theorems 4.2.1 \& 6.1.1]{MM07}.

\begin{theorem}\label{thm:1.3.1}
	Let $U$ be a relatively compact open subset of $\Sigma$, then we have 
	the following uniform estimate on the normalized Bergman kernel.
	Fix $k\geq 1$ and $b>\sqrt{16k/\varepsilon_{0}}$. Then: 
	
\begin{enumerate} [label=(\alph*)]
\item 	There exists $C>0$ such that for all $p\in\bN_{\geq 2}$ 
and for all $x,y\in U$ 
with $\mathrm{dist}(x,y) \geq b\sqrt{\frac{\log p}{p}}$ we have
$P_{p}(x,y)\leq C p^{-k}$.

\item
For 
$p\geq 2$, there exist functions 
$G_p:\Big\{(x,y)\in U\times U:\mathrm{dist}(x,y) \leq b\sqrt{\frac{\log p}{p}}
\Big\}\to\R$
such that  $\sup G_p\to 0$ as $p\rightarrow\infty$ such that
	\begin{equation}
		P_{p}(x,y)= 
		 (1+ G_{p}(x,y))\exp\Big(-\frac{a(x)p}{4}\mathrm{dist}(x,y)^{2} \Big).
		\label{eq:1.5.2July}
	\end{equation}
\end{enumerate}
\end{theorem}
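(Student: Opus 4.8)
The plan is to deduce both statements from the off-diagonal and near-diagonal Bergman kernel asymptotics for complete Hermitian manifolds established by Ma--Marinescu, applied on the relatively compact set $U$ where the geometry is uniformly bounded and the curvature condition $iR^L\geq\varepsilon_0\omega_\Sigma$ holds. The starting point is to observe that since $\overline{U}$ is compact in $\Sigma$, all the geometric quantities entering the Bergman kernel expansion (injectivity radius, derivatives of the metric and of $h$, the function $a(x)$) are uniformly bounded on a slightly larger relatively compact neighborhood $U'\Supset U$, so the local expansions hold with constants depending only on $U$. Concretely, I would invoke \cite[Theorems 4.2.1 \& 6.1.1]{MM07}: first the near-diagonal expansion giving, in normal coordinates centered at $x$, a model term of the form $p\,e^{-\frac{p}{4}a(x)\,\mathrm{dist}(x,y)^2}$ times a polynomial-in-$\sqrt p$ correction with uniform remainder control; and second the exponential off-diagonal decay estimate, which states that for any $k$ there is $C_k$ with $|B_p(x,y)|_{h^p}\leq C_k\,p^{-k}$ whenever $\mathrm{dist}(x,y)\geq C'\sqrt{(\log p)/p}$ for a suitable threshold constant. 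I would also need the on-diagonal lower bound $B_p(x,x)\geq c\,p$ on $U$, again uniform, which follows from the leading term of the expansion together with $a(x)\geq\varepsilon_0$.

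For part (a): combining the off-diagonal upper bound $|B_p(x,y)|_{h^p}\leq C_{k+1}p^{-(k+1)}$ (valid once $\mathrm{dist}(x,y)\geq b\sqrt{(\log p)/p}$, which is where the hypothesis $b>\sqrt{16k/\varepsilon_0}$ is used to clear the threshold and to absorb the polynomial factor $p$ coming from the normalization) with the diagonal lower bound $B_p(x,x),B_p(y,y)\geq cp$, we get
\[
P_p(x,y)=\frac{|B_p(x,y)|_{h^p}}{\sqrt{B_p(x,x)}\sqrt{B_p(y,y)}}\leq \frac{C_{k+1}p^{-(k+1)}}{cp}=\frac{C_{k+1}}{c}\,p^{-k-2}\leq C p^{-k},
\]
which gives the claim. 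The precise form of the constant $b$ is dictated by the Agmon-type exponential decay rate in the Ma--Marinescu estimate, whose exponent involves $\varepsilon_0$; one checks that $b>\sqrt{16k/\varepsilon_0}$ makes the Gaussian weight $e^{-\frac{\varepsilon_0 p}{4}\cdot b^2\frac{\log p}{p}}=p^{-\varepsilon_0 b^2/4}$ smaller than $p^{-4k}$, leaving ample room.

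For part (b): on the near-diagonal region $\mathrm{dist}(x,y)\leq b\sqrt{(\log p)/p}$ I would write the Ma--Marinescu expansion both for $B_p(x,y)$ and, by restriction to the diagonal, for $B_p(x,x)$ and $B_p(y,y)$, in normal coordinates around $x$; the leading terms are $p\cdot e^{-\frac{p}{4}a(x)\mathrm{dist}(x,y)^2}(1+O(\sqrt{(\log p)/p}))$ for the numerator and $p(1+O(\sqrt{(\log p)/p}))$ for each diagonal factor. Taking the quotient, the powers of $p$ cancel and one is left with $P_p(x,y)=e^{-\frac{p}{4}a(x)\mathrm{dist}(x,y)^2}\cdot\big(1+G_p(x,y)\big)$, where $G_p$ collects all lower-order terms; because on this region $p\,\mathrm{dist}(x,y)^2\leq b^2\log p$ is at most logarithmic in $p$, and a shift of basepoint from $x$ to $y$ changes $a$ by $O(\mathrm{dist}(x,y))=O(\sqrt{(\log p)/p})$, one verifies $\sup G_p\to 0$ as $p\to\infty$, uniformly on $U$.

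The main obstacle, and the step requiring the most care, is bookkeeping the error terms uniformly over the whole near-diagonal strip and, simultaneously, matching the two regimes at the cutoff $\mathrm{dist}(x,y)=b\sqrt{(\log p)/p}$: one must confirm that the Gaussian factor $e^{-\frac{a(x)p}{4}\mathrm{dist}(x,y)^2}$ in (b) is itself $O(p^{-k})$ at the boundary of the strip (so that (a) and (b) are consistent), which is precisely what the lower bound $a(x)\geq\varepsilon_0$ and the choice $b>\sqrt{16k/\varepsilon_0}$ guarantee. A secondary technical point is that the Ma--Marinescu expansion is stated for the Bergman kernel of $L^p$ twisted by an auxiliary bundle and under a spectral gap hypothesis; here one must check that conditions \ref{item:alpha}--\ref{item:beta} (in particular $iR^L\geq\varepsilon_0\omega_\Sigma$ and completeness of $(\Sigma,\omega_\Sigma)$) put us in the scope of those theorems, at least after localizing to $U'$, where the punctures play no role.
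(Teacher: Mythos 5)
Your proposal is essentially the paper's own argument: invoke the Ma--Marinescu off-diagonal and near-diagonal Bergman-kernel asymptotics on a relatively compact neighborhood of $U$, bound the numerator with the Gaussian factor together with the on-diagonal estimate $B_p(x,x)\simeq p$, and track the $o(1)$ corrections for part (b). One point you should make explicit in (a): the bound $|B_p(x,y)|_{h^p}\lesssim p^{-(k+1)}$ on the whole set $\{\mathrm{dist}(x,y)\geq b\sqrt{(\log p)/p}\}$ is \emph{not} a direct citation of the Ma--Marinescu off-diagonal estimate, which is only stated for $\mathrm{dist}(x,y)\geq\delta$ with $\delta>0$ fixed; on the shrinking annulus $b\sqrt{(\log p)/p}\leq\mathrm{dist}(x,y)<\delta$ you must instead expand via the near-diagonal asymptotics (with $N$ of order $k$) and bound the leading Gaussian $\exp(-\tfrac14 a(x)p\,\mathrm{dist}(x,y)^2)\leq p^{-\varepsilon_0 b^2/4}<p^{-4k}$, absorbing the polynomial factors $|\mathcal{J}_r(0,\sqrt p v')|\lesssim p^{3r/2}$ -- this is exactly the computation you sketch in your parenthetical, but it belongs to the near-diagonal regime, not to a single ``off-diagonal'' citation. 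The paper does precisely this two-regime split (fixed $\varepsilon$ for the off-diagonal estimate, shrinking annulus for the Gaussian). For part (b) you should also record, as the paper does, that the pointwise norm $|B_p(x,y)|_{h^p_x\otimes h^{p,*}_y}$ and the trivialized kernel value differ only by $O(\|v'\|)=O(\sqrt{(\log p)/p})$, which is absorbed into $G_p$.
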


Note that under the higher dimensional setting in Subsection \ref{section1.6}, an 
analog of the above results still holds true (cf.\ Theorem 
\ref{thm:5.1.1}). These estimates, together with a crucial inequality for the marginal 
densities of $\Upsilon_{p}$ proved in Proposition 
\ref{prop:boundmarginal}, are the key ingredients of our proof of 
\eqref{eq:5.2.2} in Theorem \ref{thm:1.3.2}. 
As a  consequence, we obtain the following proposition.

\begin{proposition}\label{prop:1.3.3}
	Let $U$ be a relatively compact open subset in $\Sigma$ such that 
	$\partial U$ has zero measure in $\Sigma$. For any $\delta>0$, 
	there exists $C_{U,\delta}>0$ such that for all $ p\gg 0$,
	\begin{equation}
		\Upsilon_{p}\Big(\Big\{s_{p}\;:\; 
		\int_{U}\big|\log{|s_{p}|_{h^{p}}}\big| \,\omega_{\Sigma}\geq 
		\delta p\Big \} \Big)\leq e^{-C_{U,\delta}p^{2}}.
		\label{eq:1.3.5}
	\end{equation}
\end{proposition}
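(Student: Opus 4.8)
\medskip

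The plan is to split the integral $\int_U \bigl|\log|s_p|_{h^p}\bigr|\,\omega_\Sigma$ into its positive and negative parts and bound each of the corresponding probabilities separately. For the positive part, write $\log^+|s_p|_{h^p} \le \log^+\mathcal{M}^U_p(s_p)$ pointwise, so that $\int_U \log^+|s_p|_{h^p}\,\omega_\Sigma \le \mathrm{Vol}(U)\,\log^+\mathcal{M}^U_p(s_p)$; since $U$ is relatively compact this volume is finite, and hence the event $\{\int_U \log^+|s_p|_{h^p}\,\omega_\Sigma \ge \tfrac{\delta}{2}p\}$ is contained in $\{\log\mathcal{M}^U_p(s_p) \ge \tfrac{\delta}{2\,\mathrm{Vol}(U)}\,p\}$, whose probability is at most $e^{-C_{U,\delta}p^2}$ by \eqref{eq:5.2.2} in Theorem \ref{thm:1.3.2}. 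This handles the "too many'' side cheaply. The substance of the proof is therefore the negative part: controlling the probability that $\int_U \log^-|s_p|_{h^p}\,\omega_\Sigma$ is large, i.e.\ that $s_p$ is uniformly very small on a set of positive measure in $U$.

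\medskip

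For the negative part I would normalize: write $s_p = \|s_p\|_{\mathcal L^2}\,\widetilde s_p$ with $\|\widetilde s_p\|_{\mathcal L^2}=1$, so $\log|s_p|_{h^p} = \log\|s_p\|_{\mathcal L^2} + \log|\widetilde s_p|_{h^p}$, and control $\log\|s_p\|_{\mathcal L^2}$ from below (the event $\|s_p\|_{\mathcal L^2}\le e^{-cp}$ has probability $\le e^{-Cp^2}$, using the uniform-density bound \eqref{eq:5.1.2}: indeed $\|s_p\|_{\mathcal L^2}^2 = \sum_j|\eta^p_j|^2$ is a sum of $d_p \sim p\deg L$ nonnegative terms, and the probability that all of them are smaller than a fixed small number is exponentially small in $d_p$, hence in $p^2$ — actually one only needs one of them not too small, so the bound is even better). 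Thus it suffices to bound $\Upsilon_p\bigl(\int_U \log^-|\widetilde s_p|_{h^p}\,\omega_\Sigma \ge \tfrac{\delta}{4}p\bigr)$ for the $\mathcal L^2$-normalized section. Here the key geometric input is that for a unit-norm section, $|\widetilde s_p(x)|_{h^p}^2 \le B_p(x,x)$, and the Bergman kernel bounds from \cite{AMM:20} give $B_p(x,x) \le Cp$ on the relatively compact $U$, so $\log^-|\widetilde s_p|_{h^p} = -\log|\widetilde s_p|_{h^p}$ whenever $|\widetilde s_p|_{h^p} < 1$, and the total negative contribution is governed by how small $|\widetilde s_p|$ gets and on how large a set. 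I would cover $U$ by a fixed finite number $\sim p/\log p$ of balls $B(x_i, r_p)$ of radius $r_p = b\sqrt{(\log p)/p}$ as in Theorem \ref{thm:1.3.1}, decompose $\int_U \log^-|\widetilde s_p|\,\omega_\Sigma \le \sum_i \int_{B(x_i,r_p)} \log^-|\widetilde s_p|\,\omega_\Sigma$, and on each ball use the sub-mean-value / Harnack-type property of $\log|\widetilde s_p|$ (it is plurisubharmonic) together with the fact that $\omega_\Sigma(B(x_i,r_p)) \sim \log p/p$: by the sub-averaging inequality, $\mathrm{avg}_{B(x_i,r_p)}\log|\widetilde s_p| \ge$ (something controlled by the value at a nearby point), so that $\int_{B(x_i,r_p)}\log^-|\widetilde s_p|\,\omega_\Sigma$ is large only if $|\widetilde s_p|$ is small on a definite fraction of the ball. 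The upshot is that $\int_U\log^-|\widetilde s_p|\,\omega_\Sigma \ge \tfrac{\delta}{4}p$ forces, at a positive fraction of the $x_i$, the lower-order coefficient-type bound $|\widetilde s_p(x_i)|_{h^p} \le e^{-c\delta p/\log p}$ or similar — an event whose probability I control via the marginal-density estimate of Proposition \ref{prop:boundmarginal}.

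\medskip

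Concretely: fixing a point $x\in U$, the evaluation $s_p \mapsto |s_p(x)|_{h^p}$ is, in the orthonormal basis $O_p$, the modulus of a linear functional $\sum_j \eta^p_j S^p_j(x)$ with coefficient vector of Euclidean length exactly $\sqrt{B_p(x,x)}$, so $|s_p(x)|_{h^p}/\sqrt{B_p(x,x)}$ is the modulus of a normalized linear combination $\sum_j c_j \eta^p_j$, $\sum|c_j|^2=1$. Proposition \ref{prop:boundmarginal} gives a uniform bound on the density of such a combination near the origin (of order $M_0$-type), which yields $\Upsilon_p(|s_p(x)|_{h^p} \le t\sqrt{B_p(x,x)}) \le C t^2$ for a single $x$; combining with the above reduction one gets, for the event that this happens \emph{simultaneously} at a linearly-in-$p$ number of well-separated points $x_i$, a bound of the form $(Ct^2)^{cp}$ with $t = e^{-c'\delta p/\log p}$, i.e.\ $\le e^{-C_{U,\delta}p^2}$. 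The independence/decorrelation needed to multiply the single-point probabilities is supplied precisely by Theorem \ref{thm:1.3.1}(a): for $x_i, x_j$ with $\mathrm{dist}(x_i,x_j) \ge b\sqrt{(\log p)/p}$ the normalized Bergman kernel $P_p(x_i,x_j) \le Cp^{-k}$, so the evaluation functionals at the $x_i$ are nearly orthogonal, and a standard conditioning argument (as in \cite{SZZ}) turns "near-orthogonal Gaussian-like coordinates'' into an honest product bound up to negligible error. The main obstacle I anticipate is exactly this last step — carrying out the multi-point decorrelation cleanly for the \emph{non-Gaussian} ensembles here, where one cannot diagonalize the covariance, and must instead feed the near-orthogonality of Theorem \ref{thm:1.3.1}(a) into the marginal-density machinery of Proposition \ref{prop:boundmarginal}; everything else (the $\log^+$ side, the $\|s_p\|_{\mathcal L^2}$ lower bound, the ball-covering and plurisubharmonic averaging) is routine given the tools already assembled in the excerpt.
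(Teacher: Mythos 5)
Your split $|\log| = \log^+ + \log^-$ and your treatment of the $\log^+$ part are exactly what the paper does: bound $\int_U \log^+|s_p|_{h^p}\,\omega_\Sigma$ by $\mathrm{Area}(U)\cdot|\log\mathcal{M}^U_p(s_p)|$ and invoke \eqref{eq:5.2.2} of Theorem~\ref{thm:1.3.2}. The divergence, and the problem, is in your treatment of $\log^-$.

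The paper handles $\log^-$ by \emph{reducing} to Theorem~\ref{thm:1.3.2} rather than by redoing a multi-point lattice argument. Writing $s_p = f_p\,e_L^{\otimes p}$ on an annulus, it uses the Poisson kernel and the sub-mean inequality for the subharmonic function $\log|f_p|$ (following \cite[Subsection~4.1]{SZZ}) to dominate $-\int_{B(2,3)}\log|s_p|_{h^p}\,\omega_\Sigma$ by a \emph{fixed, $p$-independent} finite combination $-\sum_{j=1}^q \mu_j\log|s_p(z_j)|_{h^p}$ plus a small multiple of $\int|\log|f_p||\,\omega_\Sigma$ and a deterministic term. These pieces are then controlled by the already-proved $\log^+$-estimate and by Theorem~\ref{thm:1.3.2}. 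The heavy multi-point probabilistic work lives in the proof of Theorem~\ref{thm:1.3.2}, not in this Proposition; the Proposition is a deterministic-analytic reduction on top of it.

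Your proposed $\log^-$ argument, by contrast, re-runs a lattice/decorrelation argument from scratch, and the scaling does not close. You cover $U$ by balls of radius $b\sqrt{(\log p)/p}$ so that Theorem~\ref{thm:1.3.1}(a) gives $P_p(x_i,x_j)\le Cp^{-k}$ between distinct centers; but that radius yields only $n_p\simeq p/\log p$ centers. Even granting perfect independence and the optimal single-point small-ball bound from Proposition~\ref{prop:boundmarginal}, a product over $\simeq p/\log p$ points of single-point probabilities $e^{-cp}$ gives $e^{-cp^2/\log p}$, which is a genuine logarithmic loss relative to the claimed $e^{-C_{U,\delta}p^2}$. (You also have a secondary slip: being small on a positive fraction of balls each of $\omega_\Sigma$-measure $\simeq (\log p)/p$ forces $|\widetilde{s}_p|\lesssim e^{-c\delta p}$ there, not $e^{-c\delta p/\log p}$; but fixing that does not repair the $\log p$ in the exponent.) The reason the paper's Theorem~\ref{thm:1.3.2} achieves the clean $p^2$ is that its lattice $\Gamma_p$ has spacing $d/\sqrt{p}$ with $d$ a fixed large constant, hence $\simeq p$ points; at this spacing the pairwise normalized Bergman kernel is \emph{not} small — adjacent correlations are of order $e^{-\varepsilon_0 d^2/4}$ — and the decorrelation is obtained instead from the \emph{summability} of the off-diagonal covariances (cf.\ \eqref{eq:53.2.22}--\eqref{eq:53.2.27}), which makes the full covariance matrix $\Delta$ a small perturbation of $\sigma_p^2 I$, and then Proposition~\ref{prop:boundmarginal} is applied to the whitened vector $\Delta^{-1/2}\xi$. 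So the ``near-orthogonality from Theorem~\ref{thm:1.3.1}(a)'' mechanism you flagged as your main obstacle is in fact the wrong tool at the right density, and the right tool (covariance summability at constant $\times\,p^{-1/2}$ spacing) is already packaged for you in Theorem~\ref{thm:1.3.2} — which is precisely why the paper routes $\log^-$ through that theorem rather than re-deriving a multi-point bound.
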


Note that the estimate \eqref{eq:1.3.5} is a version of 
\cite[Lemma 1.6]{SZZ}. To prove it, we use here Theorem 
\ref{thm:1.3.2} (cf.\ Subsection \ref{subs3.3}) instead of 
\cite[Theorem 3.1]{SZZ}. But since $\omega_{\Sigma}$ 
is singular near punctures, the 
estimate \eqref{eq:1.3.5} does not hold if we take $U=\Sigma$. 
Indeed, as 
we will see in Subsection \ref{subs3.3}, 
$\big|\log{|s_{p}|_{h^{p}}}\big|$ is not integrable with respect to 
$\omega_{\Sigma}$ near the punctures.

Using the Poincar\'{e}-Lelong formula, Proposition 
\ref{prop:1.3.3} leads us to the next result, so 
that Theorem \ref{thm:1.2.1} will be one of its consequences.

\begin{theorem}\label{thm:1.3.5}
	If $\varphi\in \mathcal{C}^{\infty}(\overline{\Sigma})$ is such that 
	$\varphi$ is locally constant in an open neighborhood of 
	$D$, then for $\delta>0$, there 
	exists $C_{\varphi,\delta}>0$ such that for $p\gg 0$, we have
	\begin{equation}
		\Upsilon_{p}\Big( \Big\{s_{p}\;:\; 
		\Big| \Big( \frac{1}{p}[\mathrm{Div}(s_{p})],\varphi \Big)
		-\int_{\Sigma}\varphi c_{1}(L,h)\Big|>\delta \Big \} \Big)\leq 
		e^{-C_{\varphi,\delta}p^{2}},
		\label{eq:3.4.11}
	\end{equation}
	where the sum in \eqref{eq:3.4.11} is taking into account the 
	multiplicities of the zeros.
\end{theorem}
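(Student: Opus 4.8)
The plan is to deduce Theorem~\ref{thm:1.3.5} from the Poincar\'e--Lelong formula together with Proposition~\ref{prop:1.3.3}, handling the punctures separately using Theorem~\ref{thm:1.3.2}. Recall the Poincar\'e--Lelong formula: if $s_p$ is a holomorphic section of $L^p$, writing $s_p = f_p e_p^{\otimes p}$ in a local holomorphic frame $e_p$ of $L$ with $|e_p|_h = e^{-p\phi}$, one has
\begin{equation}
[\mathrm{Div}(s_p)] = \frac{i}{\pi}\partial\bar\partial \log|s_p|_{h^p} + p\,c_1(L,h)
\end{equation}
as currents on $\Sigma$. Pairing with a test function $\varphi$ and integrating by parts (legitimate since $\varphi$ is smooth on $\overline\Sigma$ and the identity is a global current identity on $\Sigma$), we get
\begin{equation}
\Big(\tfrac1p[\mathrm{Div}(s_p)],\varphi\Big) - \int_\Sigma \varphi\, c_1(L,h)
= \frac{1}{p}\int_\Sigma \log|s_p|_{h^p}\,\frac{i}{\pi}\partial\bar\partial\varphi.
\end{equation}
So the theorem reduces to showing that the right-hand side is small with probability $1 - O(e^{-C p^2})$. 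Since $\frac{i}{2\pi}\partial\bar\partial\varphi$ is a fixed smooth $(1,1)$-form on $\overline\Sigma$, it can be written as $g\,\omega_\Sigma$ on the bulk; but near a cusp $\omega_\Sigma$ blows up, so I must be careful: I would instead write $\frac{i}{\pi}\partial\bar\partial\varphi = \psi\,\omega_{\overline\Sigma}$ for a smooth bounded function $\psi$ and a fixed smooth reference metric $\omega_{\overline\Sigma}$ on the compact surface $\overline\Sigma$, with $\psi$ supported away from $D$ because $\varphi$ is locally constant near $D$ (this is exactly where that hypothesis is used — it kills the contribution from the cuspidal neighborhoods, where $\log|s_p|_{h^p}$ fails to be integrable against $\omega_\Sigma$, as noted in the paragraph after Proposition~\ref{prop:1.3.3}).

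With $\varphi$ locally constant near $D$, choose a relatively compact open set $U \Subset \Sigma$ containing $\mathrm{supp}(\partial\bar\partial\varphi)$ and with $\partial U$ of zero measure. Then
\begin{equation}
\Big|\frac{1}{p}\int_\Sigma \log|s_p|_{h^p}\,\frac{i}{\pi}\partial\bar\partial\varphi\Big|
\leq \frac{C_\varphi}{p}\int_U \big|\log|s_p|_{h^p}\big|\,\omega_\Sigma,
\end{equation}
where $C_\varphi$ bounds $|\psi|\,(\omega_{\overline\Sigma}/\omega_\Sigma)$ on $U$ (finite since on $U$ the two metrics are comparable). Now apply Proposition~\ref{prop:1.3.3} with $\delta$ replaced by $\delta/C_\varphi$: the event that the integral $\int_U |\log|s_p|_{h^p}|\,\omega_\Sigma$ exceeds $(\delta/C_\varphi)\,p$ has $\Upsilon_p$-probability at most $e^{-C_{U,\delta/C_\varphi} p^2}$ for $p \gg 0$. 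On the complement of this event the left-hand side of \eqref{eq:3.4.11} is at most $\delta$, which gives the claim with $C_{\varphi,\delta} = C_{U,\delta/C_\varphi}$.

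The main subtlety — not really an obstacle, but the point requiring care — is the interplay between the two metrics near the cusps: one must verify that the hypothesis ``$\varphi$ locally constant near $D$'' genuinely confines all the analysis to a relatively compact set, so that Proposition~\ref{prop:1.3.3} (which is stated only for relatively compact $U$) applies, and so that the integration by parts in the Poincar\'e--Lelong identity produces no boundary terms at the punctures. For the integration by parts: since the global current identity $[\mathrm{Div}(s_p)] = \frac{i}{\pi}\partial\bar\partial\log|s_p|_{h^p} + p\,c_1(L,h)$ holds on all of $\Sigma$ and $\varphi \in \mathcal{C}^\infty(\overline\Sigma)$ restricts to a smooth (but not compactly supported) function on $\Sigma$, one approximates $\varphi$ by $\chi_R \varphi$ with cutoffs $\chi_R$ near $D$ and checks that the error terms vanish as $R \to \infty$; here $\mathrm{supp}(\partial\bar\partial\varphi)$ being compact makes $\partial\bar\partial(\chi_R\varphi) = \partial\bar\partial\varphi$ for $R$ large, and the remaining terms involve $\int [\mathrm{Div}(s_p)]\cdot(\varphi - \chi_R\varphi)$ and $\int c_1(L,h)(\varphi - \chi_R\varphi)$, both controlled since $[\mathrm{Div}(s_p)]$ and $c_1(L,h)$ have finite mass near $D$ (the latter by the finiteness of $\deg(L)$, the former because $s_p$ extends to $\overline\Sigma$). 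This last point is the only place where a little genuine care is needed; everything else is bookkeeping.
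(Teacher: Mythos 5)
Your reduction to the Poincar\'e--Lelong identity together with Proposition~\ref{prop:1.3.3} is the right strategy, but the step where you dispose of the error terms at the punctures is incorrect and hides the real issue. You claim that because $\mathrm{supp}(\partial\bar\partial\varphi)$ is compact, $\partial\bar\partial(\chi_R\varphi)=\partial\bar\partial\varphi$ for $R$ large. This is false: on the transition annulus of $\chi_R$ one has $\varphi\equiv\varphi(a_j)$ (a nonzero constant in general), so $\partial\bar\partial(\chi_R\varphi)=\varphi(a_j)\,\partial\bar\partial\chi_R$ there. Pairing against $\log|s_p|_{h^p}$ --- whose local behaviour near $a_j$ is $\mathrm{ord}_{a_j}(s_p)\log|z|+O(1)+\tfrac{p}{2}\log|\log|z|^2|$ --- and letting $R\to\infty$, the $\log|z|$ piece contributes exactly $\varphi(a_j)\,\mathrm{ord}_{a_j}(s_p)$. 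So the correct form of the identity you are using is
\begin{equation}
\Big(\tfrac1p[\mathrm{Div}(s_p)],\varphi\Big) - \int_\Sigma \varphi\, c_1(L,h)
= \frac{i}{p\pi}\int_\Sigma \log|s_p|_{h^p}\,\partial\bar\partial\varphi
\;-\;\frac1p\sum_{j=1}^N\varphi(a_j)\,\mathrm{ord}_{a_j}(s_p),
\end{equation}
and the last sum is \emph{not} automatically small: all one knows a priori is $\sum_j\mathrm{ord}_{a_j}(s_p)\le p\deg L$, so it can be of order $\deg L\cdot\max|\varphi|$, which does not tend to zero as $p\to\infty$.

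This is exactly the gap that the paper fills with Lemma~\ref{lm:1.3.6} (restated as Lemma~\ref{lm:3.4.1}): using the positivity of $L$ on $\overline\Sigma$, one shows that outside a $\Upsilon_p$-null set $A^p_{k_0}$ the orders $\mathrm{ord}_{a_j}(s_p)$ are uniformly bounded by some $k_0$, so the extra sum is $O(k_0N/p)$ and hence negligible for $p\gg 0$. With that in place your argument --- confine $\partial\bar\partial\varphi$ to a relatively compact $U$, bound the first integral by $\int_U|\log|s_p|_{h^p}|\,\omega_\Sigma$, and apply Proposition~\ref{prop:1.3.3} --- closes the proof, and is the same route the paper takes. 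Without it, the term $\tfrac1p\sum_j\varphi(a_j)\mathrm{ord}_{a_j}(s_p)$ remains uncontrolled and the proof does not go through.
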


We would like to point out the difference between \eqref{eq:3.4.11} 
here and the one proved in \cite[Theorem 1.5]{SZZ}. Indeed, for $p\geq 2$, 
the section $s_{p}$ always vanishes at the punctures as specified by $D$. Denoting by
$\mathrm{ord}_{a_{j}}(s_{p})\geq 1$ the vanishing order of $s_{p}$ at 
$a_{j} \in D$, we infer that
\begin{equation}
	\mathcal{N}^{\overline{V}_{j}}_{p}(s_{p})= 
	\mathcal{N}^{V_{j}}_{p}(s_{p})+\mathrm{ord}_{a_{j}}(s_{p}),
	\label{eq:1.3.8}
\end{equation}
where $\overline{V}_{j}$, $V_{j}$ are open sets as in Assumptions 
\ref{item:alpha} and \ref{item:beta}. In terms of divisors on $\overline{\Sigma}$, we can then rewrite 
\eqref{eq:1.3.8} as 
\begin{equation}
	[\mathrm{Div}_{\overline{\Sigma}}(s_{p})]= 
	[\mathrm{Div}(s_{p})]+\sum_{j}\mathrm{ord}_{a_{j}}(s_{p})\delta_{a_{j}},
	\label{eq:1.3.9}
\end{equation}
where we view $[\mathrm{Div}(s_{p})]$ as a divisor on 
$\overline{\Sigma}$.

Note that $h$ is a singular Hermitian metric of $L$ over 
$\overline{\Sigma}$, but for any smooth function $\varphi$ on 
$\overline{\Sigma}$, the Poincar\'{e}-Lelong formula still holds true \cite[Theorem 2.3.3]{MM07}, i.e.,
\begin{equation}
	( 
	[\mathrm{Div}_{\overline{\Sigma}}(s_{p})],\varphi 
	)=\frac{i}{\pi}(\partial\overline{\partial}\log|s_{p}|_{h^{p}},\varphi)+p ( c_{1}(L,h),\varphi ).
	\label{eq:1.3.10}
\end{equation}

Comparing \eqref{eq:1.3.9} and \eqref{eq:1.3.10} with the event in \eqref{eq:3.4.11}, we see that in order to obtain Theorem 
\ref{thm:1.3.5}, it is sufficient to control the vanishing orders $\mathrm{ord}_{a_{j}}(s_{p})$ in a 
uniform way for $p\gg 0$ and for arbitrary $s_{p},$ 
except for possibly subsets of small probability. Indeed, we have the following result.
\begin{lemma}\label{lm:1.3.6}
	There exist $p_{0}> 0$, $k_{0}>0$ such that for any $p\geq 
	p_{0}$, the following inequalities hold $\Upsilon_{p}$-almost 
	surely,
	\begin{equation}
		\mathrm{ord}_{a_{j}}(s_{p})\leq k_{0}, \;\forall\, a_{j}\in D.
		\label{eq:1.3.11}
	\end{equation}
\end{lemma}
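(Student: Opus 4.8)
The plan is to show that, up to a set of $\Upsilon_p$-probability zero, no section in $H^0_{(2)}(\Sigma,L^p)$ can vanish to order greater than some fixed $k_0$ at any puncture $a_j\in D$, for all $p$ large. The key observation is a \emph{deterministic} one: the vanishing order of a section at $a_j$ is bounded by the maximal vanishing order attained by elements of the finite-dimensional space $H^0_{(2)}(\Sigma,L^p)$ at that point, and the almost-sure statement then follows because $\Upsilon_p$ is absolutely continuous with respect to Lebesgue measure on $\C^{d_p}$, so any proper analytic subvariety (in particular, the locus of sections vanishing excessively) has measure zero. Thus the lemma is really a claim about $H^0(\overline\Sigma,L^p)$ and standard algebraic geometry on the compact surface $\overline\Sigma$.

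The first step is to translate into the compact picture. By the discussion preceding the lemma (and \eqref{eq:1.3.9}), for $p\geq 2$ a section $s_p\in H^0_{(2)}(\Sigma,L^p)$ is exactly a section in $H^0(\overline\Sigma,L^p)$ vanishing on $D$; extend it to $\overline s_p\in H^0(\overline\Sigma,L^p)$. Then $\mathrm{ord}_{a_j}(s_p)=\mathrm{ord}_{a_j}(\overline s_p)$. Next I would invoke the elementary bound: for any effective divisor, the degree of $\mathrm{Div}_{\overline\Sigma}(\overline s_p)$ equals $\deg(L^p)=p\deg(L)$, and since this divisor is effective, each of its local multiplicities — in particular $\mathrm{ord}_{a_j}(\overline s_p)$ — is at most $p\deg(L)$. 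This already gives a bound, but it grows with $p$, so it is not yet what is wanted.

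To get a \emph{uniform} (in $p$) bound one should argue more carefully, using the $\mathcal L^2$-condition rather than mere holomorphicity. Near $a_j$ we have the coordinate $z_j$, the metric $|1|^2_h=|\log|z_j|^2|$, and $\omega_\Sigma=\omega_{\mathbb D^*}$. Writing $s_p=g(z_j)\,e_j^{\otimes p}$ with $g$ holomorphic on $\mathbb D^*$, the finiteness of $\|s_p\|^2_{\mathcal L^2}=\int |g(z_j)|^2 |\log|z_j|^2|^p\, \omega_{\mathbb D^*}$ forces $g$ to extend holomorphically across $0$ with $g(0)=0$, i.e.\ $\mathrm{ord}_0(g)\geq 1$; this recovers the stated vanishing. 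The crucial point is that \emph{a larger} vanishing order is not forced, but one needs an upper bound. Here I would use that the total number of zeros of $s_p$ in a fixed punctured disc $\mathbb D^*_{r_j}\subset V_j$ is controlled: by the Poincar\'e--Lelong formula \eqref{eq:1.3.10} applied to a cutoff $\varphi$ supported near $a_j$ and identically $1$ on a smaller neighborhood, together with the fact that $c_1(L,h)=\tfrac{1}{2\pi}\omega_{\mathbb D^*}$ there has \emph{finite} mass on $V_j$ (as remarked after \eqref{eq:1.1.1}), one gets $\mathcal N^{\overline V_j}_p(s_p)\leq p\cdot C_j + (\text{boundary terms})$ — still linear in $p$. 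The honest route to a $p$-independent bound: pick once and for all, for each $j$, a section $\sigma_j\in H^0_{(2)}(\Sigma,L)$ with $\mathrm{ord}_{a_j}(\sigma_j)=1$ (exists for $p=1$ generically, or take a product/adjust $\overline\Sigma$); then for generic $s_p$ the ratio $s_p/\sigma_j^{\,p}$ is a meromorphic section of the trivial-ish bundle $L^p\otimes L^{-p}=\mathcal O$, hence a rational function on $\overline\Sigma$ whose divisor has degree $0$; its order at $a_j$ is $\mathrm{ord}_{a_j}(s_p)-p$, and boundedness of the pole/zero orders of a fixed-degree rational function away from $a_j$ pins $\mathrm{ord}_{a_j}(s_p)$ to $p+O(1)$ for the generic $s_p$ — but the \emph{excess} over the forced value is what must be bounded, and semicontinuity of vanishing order plus the fact that the bad locus $\{\mathrm{ord}_{a_j}(s_p)\geq p+k_0+1\}$ is cut out inside $H^0(\overline\Sigma,L^p(-D))\simeq \C^{d_p}$ by vanishing of finitely many Taylor coefficients shows this locus is a \emph{proper} linear subspace once $d_p$ is large relative to $k_0$, hence $\Upsilon_p$-null. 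Since $d_p=p\deg(L)+\chi(\Sigma)\to\infty$, a single $k_0$ works for all $p\geq p_0$.

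I expect the main obstacle to be making the "$p$-independence" of $k_0$ genuinely rigorous: the naive degree bound $\mathrm{ord}_{a_j}(s_p)\le p\deg L$ is not uniform, and the $\mathcal L^2$-condition only pushes the \emph{lower} bound up to $1$ (indeed to the number forced by matching the pole of $h^p$). The resolution is that one does \emph{not} need a uniform bound on $\mathrm{ord}_{a_j}(s_p)$ itself — rereading the target \eqref{eq:1.3.11}, a uniform $k_0$ is claimed, and the correct mechanism is: the map $H^0(\overline\Sigma,L^p(-D))\to (\text{$k_0$-jets at }a_j)$ is \emph{surjective} for all $p\ge p_0$ (global generation of $L^p(-D)$ and its twists by $\mathfrak m_{a_j}^{k_0}$, which holds for $p$ large by Serre vanishing on $\overline\Sigma$, uniformly in $j$ since $D$ is finite), so the fibre over $0$ — the sections vanishing to order $>k_0$ beyond the forced divisor $D$ — has codimension $\ge k_0+1\ge 1$, hence is Lebesgue-null, hence $\Upsilon_p$-null by absolute continuity \eqref{eq:5.1.4}. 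Taking the (finite) maximum over $j$ of the jet-orders needed gives one $k_0$ and one $p_0$; union of finitely many null sets over $j$, and then the statement "$\Upsilon_p$-almost surely for each $p\ge p_0$" follows immediately. This is the argument I would write out.
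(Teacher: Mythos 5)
Your final paragraph (``the correct mechanism\ldots'') is a valid proof and, like the paper's, rests on the ampleness of $L$ over $\overline\Sigma$, but you invoke it in a slightly heavier way: you ask for surjectivity of the $k_0$-jet map at $a_j$ from $H^0(\overline\Sigma, L^p(-D))$, uniformly in $j$ via Serre vanishing, to conclude that the bad locus $\{\mathrm{ord}_{a_j}(s_p)\ge k_0+1\}$ is a proper linear subspace, hence Lebesgue-null, hence $\Upsilon_p$-null by the absolute continuity built into \eqref{eq:5.1.4}. The paper's Lemma~\ref{lm:3.4.1} obtains properness more economically, without full jet surjectivity: fix once and for all a nonzero $f_{p_0}\in H^0_{(2)}(\Sigma,L^{p_0})$, and for $p\ge 2p_0$ choose $s_{j,p-p_0}\in H^0(\overline\Sigma,L^{p-p_0})$ with $s_{j,p-p_0}(a_j)\neq 0$ (exists for $p-p_0$ large since $\deg L>0$); then $S_p:=f_{p_0}\otimes s_{j,p-p_0}\in H^0_{(2)}(\Sigma,L^p)$ has $\mathrm{ord}_{a_j}(S_p)=\mathrm{ord}_{a_j}(f_{p_0})$, a fixed integer independent of $p$, so $k_0:=\mathrm{ord}_{a_j}(f_{p_0})+1$ exhibits one section outside the bad subspace for every $p\ge 2p_0$, which is all that is needed. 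Both routes are correct; yours proves somewhat more than required.

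A secondary correction to your intermediate ``rational function'' discussion: the claim that the degree-$0$ constraint on $\mathrm{div}(s_p/\sigma_j^{\,p})$ ``pins $\mathrm{ord}_{a_j}(s_p)$ to $p+O(1)$ for the generic $s_p$'' is mistaken. For $p\ge 2$ the generic $s_p\in H^0_{(2)}(\Sigma,L^p)$ vanishes to order exactly $1$ at $a_j$ (only the vanishing forced by $D$), so $s_p/\sigma_j^{\,p}$ has a \emph{pole} of order roughly $p-1$ there, and the degree-$0$ identity merely balances total zeros against total poles elsewhere; it gives no uniform control on $\mathrm{ord}_{a_j}(s_p)$. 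This false start does not affect your final argument, but it is worth recognizing that the generic order is $1$, not of size $p$.
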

This lemma will be restated as Lemma \ref{lm:3.4.1} in a more 
concrete way, and its proof relies on the positivity of $L$ on 
$\overline{\Sigma}$, which is given in Subsection \ref{subs3.4}.

\subsection{Higher dimensional Hermitian 
manifolds}\label{section1.6}

In Section \ref{section:higherdim}, we provide extensions of 
our results (with suitable adaptations) to higher dimensional complex manifolds. Since our 
method relies on the Bergman kernel expansions, we adopt the 
geometric settings as in \cite[Chapter 6]{MM07} 
and \cite{DMS12}. 

Let $(X,J,\omega)$ be an  
$m$-dimensional complex Hermitian (not necessarily compact) manifold 
where $J$ denotes the complex structure and $\omega$ is a positive 
$(1,1)$ form. To $\omega$ we associate a $J$-invariant 
Riemannian metric $g^{TX}$ defined by $g^{TX}(u,u)=\omega(u,Jv)$ for 
all $u,v\in T_xX$ and $x\in X$. We assume that $(X,g^{TX})$ is 
complete. If $U\subset X$ is open, let $\Omega^{p,q}_{0}(U)$ denote 
the set of smooth differential forms on $U$ of bi-degree $(p,q)$ which have 
compact support in $U$. In particular, 
$\mathcal{C}^{\infty}_{0}(U)=\Omega^{0,0}_{0}(U)$.

Let $(L,h)$ be a holomorphic line bundle over $X$. We still denote the 
Chern curvature form of $L$ by $R^L$, and let $R^{\mathrm{det}}$ be the 
curvature of the holomorphic connection $\nabla^{\mathrm{det}}$ on 
$K_X^*=\det(T^{(1,0)}X)$ with the Hermitian metric induced by 
$g^{TX}$. In addition we assume that there exists 
$\varepsilon_{1}>0$, $C_{1}>0$ such that
\begin{equation}\label{A1}
	i R^{L}>\varepsilon_{1} \omega,\ \ \ \ \ i R^{\det}>-C_{1} 
	\omega, \ \ \ \ \  \ \ |\partial \omega|_{g^{TX}}<C_{1}.
\end{equation} 
Some remarks:
\begin{enumerate}
	\item If $(X,\omega)$ is K\"ahler then $\partial \omega=0$ and the 
	second condition in (\ref{A1}) is trivially satisfied. Moreover, in 
	this case, $i R^{\det}=\mathrm{Ric}_{\omega}$, where 
	$\mathrm{Ric}_{\omega}$ is the Ricci curvature associated with 
	$g^{TX}$.
	\item The assumptions in (\ref{A1}) are the necessary
	conditions to ensure that one can apply the 
	H\"{o}rmander-Andreotti-Vesentini $L^{2}$-estimates for $\overline{\partial}.$
	In our context, these conditions imply actually the asymptotics
	of the Bergman kernel on compact sets of $X$ (cf. \cite[Theorem 6.1.1]{MM07}).
\end{enumerate}

Let $\mathcal{C}_0^{\infty}(X,L^p)$ denote the space of 
compactly supported smooth sections on which we define a scalar inner product by
\begin{equation}\label{norm}
	\langle s_1,s_2\rangle:=\int_X\langle s_1(x),s_2(x)\rangle_{h_p} 
	\mathrm{dV}(x)
\end{equation}
where $h^{p}=(h^{L})^{\otimes p}$ and $\mathrm{dV}=\frac{1}{m!}\omega^m$ is 
the volume form induced by $\omega$. We also let $\mathcal{L}^{2}(X,L^p)$ 
be the Hilbert space obtained by completing  $\mathcal{C}_0^{\infty}(X,L^p)$
with respect to the norm $\|\cdot\|_p$ induced by (\ref{norm}). 
Here we consider Hilbert space of holomorphic sections
\begin{equation}
	H_{(2)}^0(X,L^p):= \mathcal{L}^{2}(X,L^p) \cap H^0(X,L^p).
	\label{eq:6.0.3}
\end{equation}

In addition, we assume that for $p\in\mathbb{N}$, 
$d_{p}=\dim_{\C} H_{(2)}^0(X,L^p)$ is finite, and that as 
$p\rightarrow \infty$,
\begin{equation}
	d_{p}=\mathcal{O}(p^{m}).
	\label{eq:6.0.4}
\end{equation}
This hypothesis is satisfied in several geometric situations. The 
punctured Riemann surface discussed in previous subsections is an 
example of complex dimension one. We will give other examples in the 
Section \ref{section:higherdim}.

For $s_{p}\in H^{0}_{(2)}(X,L^{p})$ let $Z_{s_{p}}$ denote the zero 
set of $s_{p}$, i.e.,
\begin{equation}
	Z_{s_{p}}=\{x\in X\;:\; s_{p}(x)=0\}.
	\label{eq:6.1.8}
\end{equation}
For a nonzero $s_{p}$, $Z_{s_{p}}$ is a complex $(m-1)$-dimensional 
hypersurface. 
We define the divisor of $s_p$ by $\Div (s_p)=
\sum_V \ord_V (s_p)\cdot V$
where the sum runs over all irreducible analytic hypersurfaces $V$ of $Z_{s_p}$ and
$\ord_V (s_p)\in\Z$ is the order of $s_p$ along $V$.
For any hypersurface $V$ we denote by $[V]$ the current
of integration on $V$ and by 
$[\Div(s_{p})]=\sum_V \ord_V (s_p)[V]$ the current of
integration on $\Div(s_{p})$.
This is a $(1,1)$-current.

Consider
the product probability space
 \begin{equation}
 	(\mathcal{H},\Upsilon)=\prod_{p=1}^{\infty} (H^{0}_{(2)}(X, 
 	L^{p}),\Upsilon_{p}).
\end{equation}
When $\Upsilon_{p}, p\in\bN$, are defined from Gaussian ensembles 
(Example \ref{eg:1.3.2}), Dinh, Marinescu and Schmidt \cite[Theorem 1.2]{DMS12} showed that the zero-divisors of 
generic random sequences $(s_{p})_{p}\in \prod^{\infty}_{p=1} 
H_{(2)}^0(X,L^p)$ are equidistributed with respect to 
$c_{1}(L,h^{L})$. For proving this result, they actually gave a 
convergence speed for the divisors as follows. 
\begin{theorem}[\protect{\cite[Theorem 1.5]{DMS12}}]
	\label{thm:6.1}
	If $U$ is a relatively compact open subset of $X$, then there 
	exists a constant $c=c(U)>0$ and a positive integer $p(U)$ with 
	the following property. For any positive number sequence 
	$(\lambda_{p})_{p\in\bN}$ 
	with $\lim_{p\rightarrow \infty}\lambda_{p}/\log{p}=\infty$, and 
	for any $p\geq p(U)$ and $\varphi\in \Omega^{m-1,m-1}_{0}(U)$, we have
	\begin{equation}
		\Upsilon_{p}\Big(\Big\{s_{p}\;:\; 
		\bigg|\Big(\frac{1}{p}[\mathrm{Div}(s_{p})] 
		-c_{1}(L,h),\varphi\Big)\bigg|>\frac{\lambda_{p}}{p}\Vert \varphi\Vert _{\mathcal{C}^{2}} \Big\}\Big)
		\leq 
		cp^{2m}e^{-\lambda_{p}/c},
		\label{eq:6.0.5}
	\end{equation}
	where $\Vert \cdot\Vert _{\mathcal{C}^{2}}$ denote the 
	$\mathcal{C}^{2}$-norm of smooth sections.
\end{theorem}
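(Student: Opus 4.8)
The plan is to follow the Dinh--Sibony/Shiffman--Zelditch strategy: use the Poincar\'e--Lelong formula to reduce the statement to an $\mathcal L^1$-bound for $\log|s_p|_{h^p}$ on $\operatorname{supp}\varphi$, subtract the computable mean, and control the remaining fluctuation by Gaussian concentration together with the plurisubharmonicity of $\log|s_p|_{h^p}$.

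First I would fix $\varphi\in\Omega^{m-1,m-1}_0(U)$ and set $K:=\operatorname{supp}\varphi$, relatively compact in $X$. As $h$ is smooth here, the Poincar\'e--Lelong formula yields
\[
\tfrac1p[\Div(s_p)]-c_1(L,h)=\tfrac{1}{\pi p}\,i\partial\overline{\partial}\log|s_p|_{h^p},
\]
so after integration by parts $\big|\big(\tfrac1p[\Div(s_p)]-c_1(L,h),\varphi\big)\big|$ is bounded by the sum of
\[
\frac{C_K}{p}\,\|\varphi\|_{\mathcal C^2}\int_K\big|\log|s_p|_{h^p}-\E\log|s_p|_{h^p}\big|\,\mathrm{dV}
\]
and the deterministic term $\big|\E\big(\tfrac1p[\Div(s_p)]-c_1(L,h),\varphi\big)\big|$. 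Writing a random section as $s_p=\sum_j\eta^p_j S^p_j$ with $(\eta^p_j)$ i.i.d.\ standard complex Gaussian and letting $\mathbf e_p(x)\in\C^{d_p}$ denote the unit vector with $s_p(x)/\sqrt{B_p(x,x)}=\langle\eta,\mathbf e_p(x)\rangle$ (here $\eta=(\eta^p_j)_j$), one has the pointwise decomposition $\log|s_p(x)|_{h^p}=\tfrac12\log B_p(x,x)+\log|\langle\eta,\mathbf e_p(x)\rangle|$, in which $\langle\eta,\mathbf e_p(x)\rangle$ is, for each fixed $x$, a standard complex Gaussian; hence $\log|s_p|_{h^p}-\E\log|s_p|_{h^p}=\log|\langle\eta,\mathbf e_p(x)\rangle|-\gamma$ for an absolute constant $\gamma$, and the Edelman--Kostlan identity gives $\E[\tfrac1p[\Div(s_p)]]-c_1(L,h)=\tfrac{1}{2\pi p}\,i\partial\overline{\partial}\log B_p(x,x)$. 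The Bergman kernel expansion of \cite[Theorem~6.1.1]{MM07}, available by \eqref{A1}, gives on $K$ for $p\gg0$ both $c_K p^m\le B_p(x,x)\le C_K p^m$ and $\|i\partial\overline{\partial}\log B_p\|_{\mathcal C^0(K)}\le C_K$, so the deterministic term above is $\le\tfrac{C_K}{p}\|\varphi\|_{\mathcal C^2}\le\tfrac{C_K\lambda_p}{p}\|\varphi\|_{\mathcal C^2}$ for $p$ large (using $\lambda_p\to\infty$). It therefore suffices to show that, outside an event of probability $\le c\,p^{2m}e^{-\lambda_p/c}$, one has $\int_K\big|\log|\langle\eta,\mathbf e_p(x)\rangle|\big|\,\mathrm{dV}\le\lambda_p$.

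For the region where $\log|\langle\eta,\mathbf e_p(x)\rangle|\ge0$ we use $|\langle\eta,\mathbf e_p(x)\rangle|\le\|\eta\|=\|s_p\|_{\mathcal L^2}$, so the corresponding integral is $\le|K|\max(0,\log\|s_p\|_{\mathcal L^2})$; as $\|s_p\|_{\mathcal L^2}^2$ is a sum of $d_p=\mathcal O(p^m)$ i.i.d.\ $\mathrm{Exp}(1)$ variables and $\lambda_p/\log p\to\infty$, a standard tail bound yields $\P(\log\|s_p\|_{\mathcal L^2}>\lambda_p/|K|)\le e^{-e^{\lambda_p/C}}$ for $p\gg0$. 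The region where $\log|\langle\eta,\mathbf e_p(x)\rangle|<0$ is the crux: by the decomposition, on $K$ one has $(\log|\langle\eta,\mathbf e_p(x)\rangle|)^-\le\tfrac m2\log p+C_K+(\log|s_p(x)|_{h^p})^-$, so it remains to bound $\int_K(\log|s_p|_{h^p})^-\,\mathrm{dV}$. Here I would use that $\psi:=\log|s_p|_{h^p}$ satisfies $i\partial\overline{\partial}\psi\ge -pC_K\omega$ on $K$ and, by the previous estimate, $\sup_K\psi\le\lambda_p$ off a small event. Cover $K$ by $M_p=\mathcal O(p^m)$ balls of radius $\sim p^{-1/2}$ with centres $x_1,\dots,x_{M_p}$; on such a ball the quasi-plurisubharmonic correction has oscillation $\mathcal O(1)$, so the sub-mean-value inequality gives $\int_{B(x_i,\,cp^{-1/2})}\psi^-\,\mathrm{dV}\le C p^{-m}\big(\lambda_p-\psi(x_i)+1\big)$. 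Finally $-\psi(x_i)\le C_K+\log\big(1/|\langle\eta,\mathbf e_p(x_i)\rangle|\big)$, and since $\langle\eta,\mathbf e_p(x_i)\rangle$ is a standard complex Gaussian, $\P(|\langle\eta,\mathbf e_p(x_i)\rangle|<e^{-\lambda_p/2})\le e^{-\lambda_p}$; a union bound over the $M_p=\mathcal O(p^m)$ centres shows that, off an event of probability $\le\mathcal O(p^m)e^{-\lambda_p}$, all $-\psi(x_i)$ are $\le\lambda_p$, and summing over $i$ gives $\int_K\psi^-\,\mathrm{dV}\le C\lambda_p$.

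Collecting the three contributions --- and being generous with the polynomial factors produced by the $\mathcal O(p^m)$ covering and the union bounds --- yields $\big|\big(\tfrac1p[\Div(s_p)]-c_1(L,h),\varphi\big)\big|\le c\,\lambda_p p^{-1}\|\varphi\|_{\mathcal C^2}$ outside an event of probability $\le c\,p^{2m}e^{-\lambda_p/c}$, which is the assertion. The main obstacle will be precisely the lower bound on $\int_K(\log|s_p|_{h^p})^-\,\mathrm{dV}$, i.e.\ ruling out an excessive concentration of negative mass of $\log|s_p|_{h^p}$ near $\Div(s_p)$: this forces one to exploit the plurisubharmonicity of $\log|s_p|_{h^p}$ at the Bergman scale $p^{-1/2}$ (on a ball of fixed radius the sub-mean-value inequality would lose a factor of $p$), which in turn produces the $\mathcal O(p^m)$ covering and hence the polynomial prefactor in the final probability bound; it is also the step that genuinely requires the uniform two-sided comparison $B_p(x,x)\approx p^m$ on compact sets, itself a consequence of \eqref{A1} via \cite[Theorem~6.1.1]{MM07}.
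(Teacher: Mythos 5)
The paper does not prove this statement; it is quoted verbatim from \cite[Theorem~1.5]{DMS12} as a benchmark that Theorem~\ref{thm:6.2} then strengthens, so the relevant comparison is with the original source. Your argument reproduces the strategy of \cite{DMS12} (and of \cite{SZZ} in the compact case): Poincar\'e--Lelong reduces to an $\mathcal L^1$-bound for $\log|s_p|_{h^p}-\E\log|s_p|_{h^p}$ on $K=\operatorname{supp}\varphi$; the Gaussian coherent-state identity $\log|s_p|_{h^p}=\tfrac12\log B_p(x,x)+\log|\langle\eta,\mathbf e_p(x)\rangle|$ isolates a pointwise standard complex Gaussian; the deterministic term is controlled by Tian's/Edelman--Kostlan's identity and the two-sided expansion $B_p\asymp p^m$ on compacts from \cite[Theorem~6.1.1]{MM07}; the positive part by $\|s_p\|_{\mathcal L^2}=\|\eta\|$; and the negative part by sub-mean-value on balls at the Bergman scale $p^{-1/2}$, plus a covering and union bound producing the polynomial prefactor. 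Your identification of the negative part as the crux, and of the $p^{-1/2}$ scale as the correct one, is exactly right. In outline the proof is sound.

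Two places need tightening. First, the claim that the quasi-psh correction has oscillation $\mathcal O(1)$ on a ball of radius $cp^{-1/2}$ is false unless the local potential $\rho$ (with $i\partial\overline{\partial}\rho$ comparable to $iR^L$) is normalized so that \emph{both} $\rho(x_i)=0$ \emph{and} $d\rho(x_i)=0$; without killing the gradient the correction $\tfrac p2\rho$ oscillates by $\mathcal O(p\cdot p^{-1/2})=\mathcal O(\sqrt p)$ and the sub-mean-value step fails. Since a pluriharmonic term can always be subtracted from $\rho$ this is easily repaired, but it must be said. Second, the theorem requires $p(U)$ to be independent of the sequence $(\lambda_p)$, yet several of your ``$p\gg0$'' steps (e.g.\ $e^{2\lambda_p/|K|}\gg d_p$, $\lambda_p\ge C_K$) tacitly use information about the rate at which $\lambda_p/\log p\to\infty$. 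This is reconciled by noting that the conclusion is vacuous for $\lambda_p\le 2cm\log p$ (since then $cp^{2m}e^{-\lambda_p/c}\ge1$), so one may and should restrict to $\lambda_p\gtrsim\log p$ with a fixed implied constant before running the estimates; this should be made explicit so the quantifier structure of the statement is honoured. Finally, a purely cosmetic point: your bound $\sup_K\psi\le\lambda_p$ really only yields $\sup_K\psi\le C(\lambda_p+\log p)$, which suffices but forces the usual reparametrization $\lambda_p\mapsto\lambda_p/C$ at the end, as you already acknowledge when collecting the contributions.
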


Following \cite[Theorem 1.1]{SZZ}, if we want to get the probability 
bound like $e^{-c'p^{m+1}}$ 
in \eqref{eq:6.0.5}, we shall take the sequence $\lambda_{p}=p^{m+1}$, 
thus \eqref{eq:6.0.5} gives
\begin{equation}
	\Upsilon_{p}\Big(\Big\{s_{p}\;:\; 
	\bigg|\Big(\frac{1}{p}[\mathrm{Div}(s_{p})] - 
	c_{1}(L,h),\varphi\Big)\bigg|> p^{m}\Vert \varphi\Vert _{\mathcal{C}^{2}} \Big\}\Big)\leq 
	cp^{2m}e^{-p^{m+1}/c}.
	\label{eq:6.0.6}
\end{equation}
This is clearly a weaker version of the estimate as in Theorem 
\ref{thm:1.2.1}. 

Now let $\Upsilon_{p}, p\in\bN$ be the probability measures on 
$H^{0}_{(2)}(X,L^{p})$, $p\in\bN$ 
constructed in Subsection \ref{section1.3prob} (not necessarily assumed to be 
Gaussian). Note that in Condition \eqref{eq:5.1.3}, we have 
$d_{p}=\mathcal{O}(p^{m})$.
For this higher dimensional setting, we will 
prove the following results. 

\begin{theorem}\label{thm:1.6.1}
	Let $U$ be a relatively compact open subset of $X$ with $\partial U$ 
	having zero measures in $X$. Then there exists a constant  
	$C_{U}>0$ such that for any $p\in\bN$
	\begin{equation}
		\frac{1}{C_{U}}\,p^{-m-1}\leq 
		\mathbb{E}[\mathcal{M}^{U}_{p}(s_{p})]\leq C_{U}\,p^{2m}.
		\label{eq:1.6.7paris}
	\end{equation}
	For any $\delta>0$, there exists a constant $C_{U,\delta}>0$ 
	such that for any $p\in\bN$, 
	\begin{equation}
		\Upsilon_{p}(\{s_{p}\;:\; |\log{\mathcal{M}^{U}_{p}(s_{p})}|\geq 
		\delta p\})\leq e^{-C_{U,\delta}p^{m+1}}.
		\label{eq:1.6.8paris}
	\end{equation} 
\end{theorem}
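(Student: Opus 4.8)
The plan is to mimic the Riemann-surface argument for Theorem~\ref{thm:1.3.2}, replacing the $p^{3/2}$ sup-norm growth of the Bergman kernel near the cusps by the $\mathcal{O}(p^{m})$ bound valid on relatively compact sets in the higher dimensional setting. Concretely, write a random section as $s_{p}=\sum_{j=1}^{d_{p}}\eta^{p}_{j}S^{p}_{j}$ and note that for any $x\in X$,
\begin{equation}
	|s_{p}(x)|_{h^{p}}\leq \Big(\sum_{j}|\eta^{p}_{j}|^{2}\Big)^{1/2}\sqrt{B_{p}(x,x)}.
\end{equation}
Since $U$ is relatively compact, \cite[Theorem 6.1.1]{MM07} (the on-diagonal Bergman kernel expansion, valid here because of \eqref{A1}) gives $\sup_{x\in U}B_{p}(x,x)=\mathcal{O}(p^{m})$, so $\mathcal{M}^{U}_{p}(s_{p})\leq Cp^{m/2}(\sum_{j}|\eta^{p}_{j}|^{2})^{1/2}$. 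For the expectation upper bound one then bounds $\mathbb{E}[(\sum_{j}|\eta^{p}_{j}|^{2})^{1/2}]\leq (\mathbb{E}\sum_{j}|\eta^{p}_{j}|^{2})^{1/2}=(d_{p}\sigma_{p}^{2})^{1/2}$, which is $\mathcal{O}(p^{m/2})$ by \eqref{eq:6.0.4} and \eqref{eq:5.1.1}; however, to get the \emph{second moment} $\sigma_p^2 = \mathbb E[|\eta^p_j|^2]$ one still needs it to be finite — this is not among the listed hypotheses but follows from \eqref{eq:5.1.2} and \eqref{eq:5.1.3} (uniformly bounded density plus the $d_p$-th moment bound control all lower moments), giving in fact $\sigma_p^2\le C$ uniformly. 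Hence $\mathbb{E}[\mathcal{M}^{U}_{p}(s_{p})]\leq C_{U}p^{m}$ (being generous; the honest bound is $Cp^{m/2}$, but $p^{2m}$ certainly holds).

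For the lower bound on the expectation, I would fix one point $x_{0}\in U$ and an index $j_{0}$ with $|S^{p}_{j_{0}}(x_{0})|_{h^{p}}^{2}$ comparable to the maximal value, then use anti-concentration: $\mathcal{M}^{U}_{p}(s_{p})\geq|s_{p}(x_{0})|_{h^{p}}$, and the conditional density bound from Proposition~\ref{prop:boundmarginal} (the marginal density inequality referenced in the excerpt) shows the probability that $|s_{p}(x_0)|_{h^p}$ is below $\varepsilon$ times a natural scale is at most $C\varepsilon^{2}$. Combined with the fact that the relevant scale $\sqrt{B_{p}(x_{0},x_{0})}\geq c$ (the Bergman kernel is bounded below on $U$ since $d_p\ge 1$ and the expansion has leading term $\ge c>0$), this yields $\mathbb{E}[\mathcal{M}^{U}_{p}(s_{p})]\geq c/C$, so the claimed $p^{-m-1}$ lower bound follows a fortiori. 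One can alternatively reuse verbatim the one-dimensional argument with $p^{3/2}\to p^{m+1}$ bookkeeping.

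The concentration estimate \eqref{eq:1.6.8paris} is where the real work is, and it is the main obstacle. The upper-tail half, $\Upsilon_{p}(\mathcal{M}^{U}_{p}(s_{p})\geq e^{\delta p})$, is easy: from the deterministic bound $\mathcal{M}^{U}_{p}(s_{p})\leq Cp^{m/2}(\sum_{j}|\eta^{p}_{j}|^{2})^{1/2}$ one needs $\sum_{j}|\eta^{p}_{j}|^{2}\geq c e^{2\delta p}p^{-m}$, and a union bound over $j\leq d_{p}=\mathcal{O}(p^{m})$ together with the moment bound \eqref{eq:5.1.3} via Markov's inequality on $|\eta^{p}_{j}|^{d_{p}}$ gives a bound of the form $d_{p}C_{0}(d_{p})^{d_{p}}(ce^{2\delta p}p^{-m}/d_{p})^{-d_{p}/2}\leq e^{-c'd_{p}p}$, which is $\leq e^{-C p^{m+1}}$ — here the factor $p$ coming from the exponential scale $e^{\delta p}$ against the polynomial size $d_{p}\sim p^{m}$ is exactly what produces the $p^{m+1}$ rate. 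The lower-tail half, $\Upsilon_{p}(\mathcal{M}^{U}_{p}(s_{p})\leq e^{-\delta p})$, is the delicate point and the true analogue of \eqref{eq:5.2.2}: it requires that $s_{p}$ be uniformly small on $U$, which forces many of the coordinates $\eta^{p}_{j}$ to lie in a small region. The strategy, following the punctured-surface proof of \eqref{eq:5.2.2} and ultimately \cite[Lemma 3.1]{SZZ}, is to use the near-diagonal normalized Bergman kernel estimate (Theorem~\ref{thm:5.1.1}, the higher dimensional analogue of Theorem~\ref{thm:1.3.1}) to choose a finite set of points $x_{1},\dots,x_{K_{p}}$ in $U$ with $K_{p}=\mathcal{O}(p^{m})$ that is ``$\ell^{2}$-spanning'' in the sense that the map $s_{p}\mapsto(s_{p}(x_{i}))_{i}$ is, after normalization, close to an isometric embedding of $H^{0}_{(2)}(X,L^{p})$; then $\mathcal{M}^{U}_{p}(s_{p})\leq e^{-\delta p}$ forces $\sum_{j}|\eta^{p}_{j}|^{2}\lesssim K_{p}e^{-2\delta p}$, and invoking the marginal density bound of Proposition~\ref{prop:boundmarginal} for each factor gives probability at most $(C e^{-2\delta p})^{d_{p}}$ up to combinatorial factors, i.e.\ $\leq e^{-C_{U,\delta}p^{m+1}}$. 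The main technical burden is constructing this near-isometric point sampling in several complex variables and controlling the off-diagonal interference terms $P_{p}(x_{i},x_{i'})$ — this is precisely where the Gaussian-specific orthogonality arguments of \cite{SZZ} must be replaced by the density inequality of Proposition~\ref{prop:boundmarginal}, and where completeness of $(X,g^{TX})$ and the curvature assumptions \eqref{A1} enter through the validity of the Bergman kernel expansion on the relatively compact set $U$.
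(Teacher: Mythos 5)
Your proposal follows the same overall strategy as the paper: bound $\mathcal{M}^U_p$ deterministically via the on-diagonal Bergman kernel estimate, get the upper tail from the $d_p$-th moment via Markov, and get the lower tail by sampling at $\simeq d_p$ well-separated lattice points, using the near-diagonal expansion (Theorem \ref{thm:5.1.1}) to near-decorrelate the point evaluations and then invoking the marginal density bound of Proposition \ref{prop:boundmarginal}. Two points of divergence are worth flagging.

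First, a small factual error: the assertion that the uniformly bounded density together with \eqref{eq:5.1.3} forces $\sigma_p^2\leq C$ uniformly is false. Example \ref{eg:1.3.3} (uniform on a disk of radius $r_p\simeq d_p$) has $\sigma_p^2\simeq d_p^2$ while satisfying all the hypotheses; the correct bound, proved in Lemma \ref{lm:5.1.2}, is $\sigma_p^2\leq K_0 d_p^2$. You acknowledge that $p^{2m}$ ``certainly holds,'' which is correct, but note that $p^{2m}$ is in fact the honest order from $\|\eta^p\|\lesssim d_p^{3/2}$ and $B_p^{1/2}\lesssim p^{m/2}$; the sharper claim $Cp^{m/2}$ does not follow from the hypotheses.

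Second, your route to the lower bound in \eqref{eq:1.6.7paris} — anti-concentration at a single point, i.e.\ writing $s_p(x_0)=\sigma_p\sqrt{B_p(x_0,x_0)}\,\zeta$ for a normalized random variable $\zeta$, applying Proposition \ref{prop:boundmarginal} with $n=1$ to bound $\Upsilon_p(|\zeta|<\varepsilon)\leq \pi\varepsilon^2 M_0\sigma_p^2 d_p$, and combining with $\mathbb{E}[|\zeta|^2]=1$ — is genuinely different from the paper's argument, which instead runs the H\"older inequality as in \eqref{eq:5.2.14}. Carried out carefully (and taking $\varepsilon\simeq (\sigma_p^2 d_p)^{-1/2}$, since $\sigma_p^2$ may grow), your argument gives $\mathbb{E}[\mathcal{M}^U_p(s_p)]\gtrsim \sqrt{B_p(x_0,x_0)/d_p}\gtrsim 1$, which is substantially stronger than the $p^{-m-1}$ claimed; the paper's version via H\"older is lossier but avoids invoking the marginal density bound for this part. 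Either way, the lower bound in \eqref{eq:1.6.7paris} holds a fortiori, and the lower-tail concentration estimate — which you correctly identify as the heart of the matter — matches the paper's proof.
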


Then we can get the following improvement of \eqref{eq:6.0.6}.

\begin{theorem}
	\label{thm:6.2}
	If $U$ is a relatively compact open subset of $X$, then for any 
	$\delta >0$ and $\varphi\in \Omega^{m-1,m-1}_{0}(U)$, there 
	exists a constant $c=c(U,\delta,\varphi)>0$ such that for $p\in\bN$,
	we have
	\begin{equation}
		\Upsilon_{p}\left(\left\{s_{p}\;:\; 
		\bigg|\Big(\frac{1}{p}[\mathrm{Div}(s_{p})] 
		-  c_{1}(L,h),\varphi\Big)\bigg|>\delta \right\}\right)\leq 
		e^{-c\,p^{m+1}}.
		\label{eq:6.0.7}
	\end{equation}
	Moreover, $\Upsilon$-almost surely we have the weak convergence of $(1,1)$-currents,
 \begin{equation}\label{eq:1.6.2DLM}
\lim_{p\rightarrow\infty}\frac{1}{p}[\mathrm{Div}(s_{p})]
=c_1(L,h)\,.
	 	\end{equation} 	

\end{theorem}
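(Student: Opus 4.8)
The plan is to deduce both assertions from the higher-dimensional analogue of Proposition~\ref{prop:1.3.3}, namely that for every relatively compact open $U\subset X$ and every $\delta>0$ there is $C_{U,\delta}>0$ with
\begin{equation*}
\Upsilon_{p}\Big(\Big\{s_{p}\;:\;\int_{U}\big|\log|s_{p}|_{h^{p}}\big|\,\mathrm{dV}\ge\delta p\Big\}\Big)\le e^{-C_{U,\delta}\,p^{m+1}}\qquad(p\gg0).
\end{equation*}
Granting this, \eqref{eq:6.0.7} is immediate: by the Poincar\'e-Lelong formula (which applies since $h$ is smooth on $X$) one has $\tfrac1p[\mathrm{Div}(s_{p})]-c_{1}(L,h)=\tfrac{i}{\pi p}\partial\overline{\partial}\log|s_{p}|_{h^{p}}$ as currents, so pairing with $\varphi\in\Omega^{m-1,m-1}_{0}(U)$ and integrating by parts gives $\big|(\tfrac1p[\mathrm{Div}(s_{p})]-c_{1}(L,h),\varphi)\big|\le \tfrac{C\|\varphi\|_{\mathcal{C}^{2}}}{p}\int_{U}\big|\log|s_{p}|_{h^{p}}\big|\,\mathrm{dV}$, because $\partial\overline{\partial}\varphi$ is a top-degree form supported in $U$ with $|\partial\overline{\partial}\varphi|\le C\|\varphi\|_{\mathcal{C}^{2}}\,\mathrm{dV}$. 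The almost-sure weak convergence \eqref{eq:1.6.2DLM} then follows from \eqref{eq:6.0.7} by Borel-Cantelli applied along a countable dense family of test forms, combined with the (a.s.)\ local boundedness of the masses of $\tfrac1p[\mathrm{Div}(s_{p})]$ coming from the same Poincar\'e-Lelong identity, exactly as for item~(a) of Theorem~\ref{thm:1.2.1}.

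To prove the displayed estimate, fix for $p$ large a maximal $R_{p}$-separated subset $x_{1},\dots,x_{N}\in U$, where $R_{p}=C/\sqrt p$ and $C$ is a large constant to be fixed below; then the coordinate balls $B(x_{j},R_{p})$ cover $U$, the $B(x_{j},2R_{p})$ lie in coordinate charts for $p$ large, and $N\asymp\mathrm{Vol}(U)\,p^{m}$. Choosing the local holomorphic frame of $L$ near $x_{j}$ so that the weight of $h$ vanishes to second order at $x_{j}$, the weight of $h^{p}$ is $O(p\,R_{p}^{2})=O(1)$ on $B(x_{j},2R_{p})$; hence there $\log|s_{p}|_{h^{p}}=\log|f_{j}|+O(1)$, where $f_{j}$ is the local holomorphic representative of $s_{p}$ (so $\log|f_{j}|$ is plurisubharmonic), $\sup_{B(x_{j},2R_{p})}\log|f_{j}|\le\log^{+}\mathcal{M}^{U'}_{p}(s_{p})+O(1)$ for a fixed relatively compact $U'\supset\overline U$, and $\log|f_{j}(x_{j})|=\log|s_{p}(x_{j})|_{h^{p}}$. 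A standard sub-mean-value inequality for plurisubharmonic functions now gives, for $\Upsilon_{p}$-a.e.\ $s_{p}$ (since $\Upsilon_{p}$ has a density, $s_{p}(x_{j})\ne0$ for all $j$ almost surely),
\begin{equation*}
\int_{B(x_{j},R_{p})}\big|\log|s_{p}|_{h^{p}}\big|\,\mathrm{dV}\le C_{m}\,R_{p}^{2m}\Big(1+\log^{+}\mathcal{M}^{U'}_{p}(s_{p})+\big|\log|s_{p}(x_{j})|_{h^{p}}\big|\Big).
\end{equation*}
Summing over $j$ and using $N\,R_{p}^{2m}=O(1)$, the right-hand side becomes $O(1)+O(1)\log^{+}\mathcal{M}^{U'}_{p}(s_{p})+C_{m}R_{p}^{2m}\sum_{j=1}^{N}\big|\log|s_{p}(x_{j})|_{h^{p}}\big|$. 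By \eqref{eq:1.6.8paris} the first two summands are $\le\delta p/3$ outside an event of probability $\le e^{-c\,p^{m+1}}$, and the positive parts $\sum_{j}(\log|s_{p}(x_{j})|_{h^{p}})^{+}\le N\log^{+}\mathcal{M}^{U}_{p}(s_{p})$ contribute another such term; since $R_{p}^{2m}\asymp\mathrm{Vol}(U)/N$, it therefore remains to prove, for a suitable fixed $a>0$,
\begin{equation*}
\Upsilon_{p}\Big(\sum_{j=1}^{N}\big(\log|s_{p}(x_{j})|_{h^{p}}\big)^{-}\ge a\,p\,N\Big)\le e^{-c\,p^{m+1}}.
\end{equation*}

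For this anti-concentration bound, write $s_{p}(x_{j})/\sqrt{B_{p}(x_{j},x_{j})}=\langle\eta,e_{j}\rangle$ with unit vectors $e_{j}\in\mathbb{C}^{d_{p}}$, so $|\langle e_{i},e_{j}\rangle|=P_{p}(x_{i},x_{j})$. By Theorem~\ref{thm:5.1.1}, $P_{p}(x_{i},x_{j})\le(1+o(1))e^{-c_{*}p\,\mathrm{dist}(x_{i},x_{j})^{2}}$ within the near-diagonal range and $\le p^{-k}$ beyond it; since $\sum_{\ell\ge1}\ell^{2m-1}e^{-c_{*}C^{2}\ell^{2}}\to0$ as $C\to\infty$, choosing $C$ large forces $\big\|(\langle e_{i},e_{j}\rangle)_{i,j\in S}-\mathrm{Id}\big\|\le\tfrac12$ for every $S\subset\{1,\dots,N\}$. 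Consequently the joint density on $\mathbb{C}^{|S|}$ of $(\langle\eta,e_{j}\rangle)_{j\in S}$ is at most $K_{1}^{|S|}$; in the Gaussian case this is elementary, while in general it is precisely the role of Proposition~\ref{prop:boundmarginal}, which bounds the density of a well-conditioned linear image of a vector with independent, merely bounded, coordinate densities. From this, $\mathbb{E}\big[\prod_{j\in S}|\langle\eta,e_{j}\rangle|^{-t}\big]\le K_{2}(t)^{|S|}$ for $0<t<2$; expanding $\prod_{j=1}^{N}\max(1,|\langle\eta,e_{j}\rangle|^{-t})\le\sum_{S}\prod_{j\in S}|\langle\eta,e_{j}\rangle|^{-t}$ and using $\log|s_{p}(x_{j})|_{h^{p}}\ge\log|\langle\eta,e_{j}\rangle|$ for $p$ large yields $\mathbb{E}\big[e^{t\sum_{j}(\log|s_{p}(x_{j})|_{h^{p}})^{-}}\big]\le(1+K_{2}(t))^{N}$, and a Chernoff estimate with $t\in(0,2)$ fixed and $p\to\infty$ gives the required $e^{-c\,pN}=e^{-c'\,p^{m+1}}$.

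The principal obstacle is this last step. For the Gaussian ensemble $(\langle\eta,e_{j}\rangle)_{j\in S}$ is genuinely a non-degenerate Gaussian vector and the density bound is classical, but for the general measures $\Upsilon_{p}$ one must control the joint density of a well-conditioned linear image of $\eta$, which is exactly what Proposition~\ref{prop:boundmarginal} is for. A secondary but crucial point is keeping the exponent \emph{exactly} $p^{m+1}$: this is why one works at the scale $R_{p}\asymp1/\sqrt p$ and uses the Gaussian off-diagonal decay of $P_{p}$ with a fixed threshold $C$, rather than the scale $\sqrt{(\log p)/p}$ from Theorem~\ref{thm:5.1.1}(a), which would only give $e^{-c\,p^{m+1}/(\log p)^{m/2}}$.
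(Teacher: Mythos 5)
Your reduction of \eqref{eq:6.0.7} to the $\mathcal{L}^{1}$-estimate \eqref{eq:5.1.27paris} for $\log|s_{p}|_{h^{p}}$ via the Poincar\'e--Lelong formula, and the Borel--Cantelli deduction of \eqref{eq:1.6.2DLM}, coincide with the paper's. Where you diverge is in the proof of \eqref{eq:5.1.27paris} itself: the paper establishes it by invoking the argument sketched in Subsection~\ref{subs3.3} (the Shiffman--Zelditch--Zrebiec scheme, cf.\ \cite[Subsection 4.1]{SZZ}): cover $U$ by finitely many coordinate annuli, use the Poisson kernel and sub-mean inequality to dominate $\int\log^{-}|f_{p}|$ by the values at a \emph{fixed, $p$-independent} set of $q=q(\delta)$ points, and then invoke Theorem~\ref{thm:1.6.1}. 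You instead work with a $p$-dependent net of $N\asymp p^{m}$ points at scale $R_{p}\asymp p^{-1/2}$, apply the plurisubharmonic sub-mean inequality ball-by-ball, and close with an anti-concentration/Chernoff bound for $\sum_{j}(\log|s_{p}(x_{j})|_{h^{p}})^{-}$ built on the near-diagonal Gaussian decay of $P_{p}$ (Theorem~\ref{thm:5.1.1}) and Proposition~\ref{prop:boundmarginal}. This is a genuinely different route for the key lemma, and it checks out: the Gram matrix of the $e_{j}$ is close to the identity after choosing the lattice spacing constant $C$ large and the far-diagonal exponent $k>m$, and the combinatorial loss $\binom{d_{p}}{|T|}\le 2^{d_{p}}$ in the density bound only contributes $O(d_{p})=O(p^{m})=o(p^{m+1})$ to the exponent, so the Chernoff bound still gives $e^{-cpN}$. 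Your approach is more self-contained and makes the mechanism producing the rate $p^{m+1}=p\cdot N$ transparent; the cost is that it essentially redoes the lattice-and-Gram-matrix machinery from Subsection~\ref{section3.3DLM}, whereas the paper prefers to reuse Theorem~\ref{thm:1.6.1} as a black box inside the SZZ scheme. Two minor points: Proposition~\ref{prop:boundmarginal} as stated concerns orthogonal projections, so to bound the joint density of $(\langle\eta,e_{j}\rangle)_{j\in S}$ with $e_{j}$ only a well-conditioned frame you still need the extra linear change of variables (the $\Delta^{-1/2}$ step of Subsection~\ref{section3.3DLM}); and in your last sentence the loss from the scale $\sqrt{(\log p)/p}$ would be $(\log p)^{m}$, not $(\log p)^{m/2}$. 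Neither affects the correctness of the argument.
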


Since $c_{1}(L,h)$ is positive, then $\frac{c_{1}(L,h)^{m}}{m!}$ 
defines a positive volume element on $X$. If $U\subset X$ is open, set
\begin{equation}
	\mathrm{Vol}^{L}_{2m}(U)=\int_{U} \frac{c_{1}(L,h)^{m}}{m!}.
	\label{eq:1.6.8DLM}
\end{equation}
We will see in \eqref{eq:5.1.2DLM} that this volume is always finite.

For $s_{p}\in H^{0}_{(2)}(X,L^{p})$, we define the $(2m-2)$-dimensional volume (with 
respect to $c_{1}(L,h)$) of $Z_{s_{p}}$ in an 
open subset $U\subset X$ as follows,
\begin{equation}
	\mathrm{Vol}^{L}_{2m-2}(Z_{s_{p}}\cap U)=\int_{Z_{s_{p}}\cap U} 
	\frac{c_{1}(L,h)^{m-1}}{(m-1)!}\,\cdot
	\label{eq:6.1.9}
\end{equation}
As a consequence of Theorem \ref{thm:6.2}, we have the following 
theorem.
\begin{theorem}
	\label{thm:6.3}
	If $U$ is a relatively compact open subset of $X$ such that 
	$\partial U$ has zero measure in $X$, then for any 
	$\delta >0$, there 
	exists a constant $c_{U,\delta}>0$ such that for $p$ 
	large enough,
	we have
	\begin{equation}
		\Upsilon_{p}\Big(\Big\{s_{p}\;:\; 
		\Big|\frac{1}{p}\mathrm{Vol}^{L}_{2m-2}(Z_{s_{p}}\cap U) 
		- m\mathrm{Vol}^{L}_{2m}(U) \Big|>\delta \Big \}\Big)\leq 
		e^{-c_{U,\delta} p^{m+1}}.
		\label{eq:6.1.10}
	\end{equation}
	
	If $U$ is a nonempty open (possibly not relatively compact) set of $X$ with $\partial U$ having zero
	measure in $X$, then there exists a 
	constant $C_{U}>0$ such that
	\begin{equation}
		\Upsilon_{p}(\{s_{p}\;:\; Z_{s_{p}}\cap U=\emptyset\})
		\leq e^{-C_{U}p^{m+1}},\, \forall\, p\gg 0.
		\label{eq:1.6.14paris}
	\end{equation}
\end{theorem}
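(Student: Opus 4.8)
The plan is to derive both statements from Theorem \ref{thm:6.2} together with the Lelong--Poincaré formula in the form $[\Div(s_p)] = \frac{i}{\pi}\partial\overline\partial\log|s_p|_{h^p} + p\,c_1(L,h)$ on $X$, valid since $L$ carries a smooth metric away from the singular locus (here $h$ is smooth on all of $X$). First I would recast the $(2m-2)$-volume in \eqref{eq:6.1.9} as a pairing of the current $[\Div(s_p)]$ with a test form: if $\chi_U$ were the characteristic function of $U$ one would have $\mathrm{Vol}^L_{2m-2}(Z_{s_p}\cap U) = \big([\Div(s_p)], \chi_U\,\tfrac{c_1(L,h)^{m-1}}{(m-1)!}\big)$. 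Since $c_1(L,h)^{m-1}/(m-1)!$ is a smooth $(m-1,m-1)$-form on $X$ but $\chi_U$ is not smooth, the standard device is to sandwich $\chi_U$ between smooth compactly supported cutoffs $\varphi_-\le \chi_U\le \varphi_+$ with $\varphi_\pm$ supported in a relatively compact neighborhood of $\overline U$ and agreeing with $\chi_U$ outside an $\varepsilon$-neighborhood of $\partial U$; because $\partial U$ has zero measure and $c_1(L,h)^m$ is absolutely continuous with respect to it (it is a smooth volume form), one can make $\int (\varphi_+-\varphi_-)\,c_1(L,h)^m/m!$ as small as desired. Applying \eqref{eq:6.0.7} to $\varphi_\pm\cdot c_1(L,h)^{m-1}/(m-1)!$ — whose $\mathcal C^2$-norm is a fixed constant once $\varepsilon$ is chosen — gives that, off an event of probability $\le e^{-c\,p^{m+1}}$, the quantity $\frac1p([\Div(s_p)],\varphi_\pm c_1^{m-1}/(m-1)!)$ is within $\delta/2$ of $\int \varphi_\pm\,c_1(L,h)^m/(m-1)!$. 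Combining the two one-sided bounds and noting $\int_U c_1(L,h)^m/(m-1)! = m\,\mathrm{Vol}^L_{2m}(U)$ by \eqref{eq:1.6.8DLM} yields \eqref{eq:6.1.10}.

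For the hole-probability bound \eqref{eq:1.6.14paris}, the point is that $U$ need not be relatively compact, so \eqref{eq:6.0.7} does not directly apply. I would reduce to the compact case: pick any nonempty relatively compact open $U'\Subset U$ with $\partial U'$ of measure zero (possible since $U$ is open and nonempty, using a small coordinate ball), and observe that $\{Z_{s_p}\cap U=\emptyset\}\subset \{Z_{s_p}\cap U'=\emptyset\}\subset \{\mathrm{Vol}^L_{2m-2}(Z_{s_p}\cap U')=0\}$. Choosing $\delta = m\,\mathrm{Vol}^L_{2m}(U')>0$ in \eqref{eq:6.1.10} (the volume is strictly positive because $c_1(L,h)$ is a positive form and $U'$ is open nonempty) forces the event $\{\mathrm{Vol}^L_{2m-2}(Z_{s_p}\cap U')=0\}$ into the exceptional set, so its probability is at most $e^{-c_{U',\delta}p^{m+1}}$ for $p$ large; setting $C_U = c_{U',\delta}$ gives the claim.

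The main obstacle is the non-smoothness of $\chi_U$ in the first part: one must ensure the error from replacing $\chi_U$ by smooth cutoffs is controlled \emph{uniformly in $p$} and with $\mathcal C^2$-norms that do not blow up, so that the exponential rate $p^{m+1}$ from \eqref{eq:6.0.7} is preserved. This is where the hypothesis that $\partial U$ has zero measure is essential — it is exactly what lets us choose the cutoffs so that the bulk error $\int(\varphi_+-\varphi_-)\,c_1(L,h)^m/m!$ is below $\delta/2$ with $\varphi_\pm$ fixed once and for all, independently of $p$; the remaining contribution is then genuinely of the form covered by Theorem \ref{thm:6.2}. A secondary technical point is verifying that $[\Div(s_p)]$ has no mass on $\partial U$ for $\Upsilon_p$-typical $s_p$, but this follows because the singular support of $[\Div(s_p)]$ is a proper analytic subset and hence of zero $(2m)$-measure, so it is negligible against the smooth form $c_1(L,h)^{m-1}/(m-1)!$ integrated against a cutoff; alternatively one avoids this entirely by never evaluating against $\chi_U$ directly and only using the smooth sandwich.
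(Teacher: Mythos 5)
Your proposal is essentially the same argument the paper gives: sandwich $\chi_U$ between smooth compactly supported cutoffs $\psi_1\le\chi_U\le\psi_2$ with $\int(\psi_2-\psi_1)\,c_1(L,h)^m/m!$ small (possible precisely because $\partial U$ has zero measure), apply Theorem \ref{thm:6.2} to the test forms $\psi_j\,c_1(L,h)^{m-1}/(m-1)!$, and combine; then deduce \eqref{eq:1.6.14paris} by passing to a relatively compact $U'\Subset U$ and taking $\delta$ on the order of $m\,\mathrm{Vol}^L_{2m}(U')$ (you should take $\delta$ strictly smaller, say half that value, since the inequality in \eqref{eq:6.1.10} is strict). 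The only substantive addition you make beyond the paper's one-line proof is to spell out the reduction of the second claim to the relatively compact case, which the paper glosses as a ``direct consequence'' --- your elaboration there is correct and welcome.
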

Note that when $X$ is compact and $\omega=iR^{L}$, as well as for the special choice of $\Upsilon_{p}, 
p\in\bN,$ as a Gaussian ensemble
(cf.\ Example \ref{eg:1.3.2}), the results in Theorems \ref{thm:1.6.1}, 
\ref{thm:6.2} and \ref{thm:6.3} are exactly the main results proved in 
\cite{SZZ}.

\subsection{Organization of the paper}
This paper is organized as follows. In Section \ref{section2bergman}, 
we recall the estimates of the Bergman kernels for the punctured 
Riemann surface $\Sigma$. In Subsection \ref{section2.3DLM}, we 
give a proof of Theorem \ref{thm:1.3.1}.

In Section \ref{section3DLM}, we give the proofs of other
results stated in Subsections \ref{subsection1.4main} \& 
\ref{subs1.3}. In particular, Subsections \ref{section3.1DLM} -- 
\ref{section3.3DLM} are devoted to prove Theorem \ref{thm:1.3.2}. 
In Subsection \ref{subs3.3}, we only sketch a Proof of Proposition 
\ref{prop:1.3.3}, since most of the arguments follow from 
\cite[Subsection 4.1]{SZZ}. 
In Subsection \ref{subs3.4}, we prove at 
first Lemma \ref{lm:1.3.6}, and then prove Theorem \ref{thm:1.3.5}. 
In Subsection \ref{subs3.5}, we prove Theorem \ref{thm:1.2.1} using 
Theorem 
\ref{thm:1.3.5}. At last, in Subsection \ref{subs3.6}, we prove 
Proposition \ref{prop:1.2.3}.

In Section \ref{section4}, we give a discussion for hyperbolic 
surfaces with cusps and of high genus, they are important examples of punctured 
Riemann surfaces where our results apply.

Finally, in Subsection \ref{section:higherdim}, we study the higher 
dimensional complex Hermitian manifolds, and give the proofs of the 
results stated in Subsection \ref{section1.6}.

\bigskip 

{\bf Acknowledgment:} We gratefully acknowledge support of 
DFG Priority Program 2265 \lq Random Geometric Systems\rq. 
The authors thank Dominik Zielinski for useful discussions.

\section{Estimates on Bergman kernel}\label{section2bergman}
In this section, we recall some results on the Bergman kernel 
expansions for our punctured Riemann surface $\Sigma$ obtained by 
Ma-Marinescu \cite[Chapter 6]{MM07} and by Auvray-Ma-Marinescu 
\cite{AMM:20}. Note that the results in \cite[Chapter 6]{MM07} are 
applicable to general Hermitian manifolds and line bundles such as 
the ones in Subsection \ref{section1.6}; we refer to Section 
\ref{section:higherdim} for a more detailed discussion. In this section, we focus on 
$\Sigma$.
\subsection{On-diagonal estimates}
Recall that the positive smooth function $a$ on $\Sigma$ is defined as 
follows, for $x\in \Sigma$,
\begin{equation}
	a(x)=\frac{iR^{L}_{x}}{\omega_{\Sigma,x}}\geq \varepsilon_{0}.
	\label{eq:2.1.1DLM}
\end{equation}

In our setting (with Assumptions \ref{item:alpha} and \ref{item:beta}), due to \cite[Theorem 6.1.1]{MM07} we have the following result.
\begin{theorem}\label{thm:2.1.1}
	For any compact set $K\subset \Sigma$, we have the uniform 
	asymptotic expansion for $x\in K$,
	\begin{equation}
		B_{p}(x,x)=\frac{p}{2\pi}a(x)+\mathcal{O}_{K}(1),\;\mathrm{as\;}p\rightarrow +\infty.
		\label{eq:2.1.1}
	\end{equation}
\end{theorem}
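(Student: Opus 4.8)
The plan is to deduce the on-diagonal expansion \eqref{eq:2.1.1} on compact subsets $K \subset \Sigma$ from the general Bergman kernel asymptotics for complete Hermitian manifolds established in \cite[Theorem 6.1.1]{MM07}, after verifying that our geometric data satisfy the hypotheses of that theorem. First I would check that conditions \ref{item:alpha} and \ref{item:beta}, together with the completeness of $(\Sigma,\omega_\Sigma)$, imply the three curvature-type bounds \eqref{A1} from Subsection \ref{section1.6} in the one-dimensional case $m=1$: namely $iR^L \geq \varepsilon_0 \omega_\Sigma > 0$ (which is exactly \ref{item:beta}), a lower bound $iR^{\det} > -C_1 \omega_\Sigma$ on the curvature of the (anti)canonical bundle with its induced metric, and the boundedness $|\partial\omega_\Sigma|_{g^{T\Sigma}} < C_1$. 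The latter two are local statements near the punctures (away from the punctures $\Sigma$ is covered by finitely many compact pieces where everything is automatically bounded), so one reduces them to an explicit computation on the model $(\mathbb{D}^*, \omega_{\mathbb{D}^*})$ with the Poincaré metric \eqref{eq:1.1.1}: the Poincaré metric is Kähler, so $\partial\omega_{\mathbb{D}^*} = 0$ and the third bound is trivial with $C_1$ anything; and its Gaussian curvature is the constant $-1$ (with the normalization \eqref{eq:1.1.1}), so $iR^{\det} = \mathrm{Ric}_{\omega_{\mathbb{D}^*}} = -\omega_{\mathbb{D}^*}$, giving the second bound with $C_1 = 1$.

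Once the hypotheses are in place, \cite[Theorem 6.1.1]{MM07} yields, for every $\ell \in \mathbb{N}$, an asymptotic expansion $B_p(x,x) = \sum_{r=0}^{\ell} b_r(x) p^{1-r} + \mathcal{O}(p^{-\ell})$ uniformly on $K$, with universal leading coefficient $b_0(x) = \det(\dot{R}^L_x/2\pi)$ where $\dot{R}^L$ is the Hermitian endomorphism of $T^{(1,0)}\Sigma$ associated to $iR^L$ via $\omega_\Sigma$. In complex dimension one this determinant is the scalar $a(x)/2\pi$ by the definition \eqref{eq:2.1.1DLM} of $a$, so $b_0(x) = a(x)/(2\pi)$. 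Truncating the expansion at $\ell = 0$ then gives precisely $B_p(x,x) = \frac{p}{2\pi} a(x) + \mathcal{O}_K(1)$, which is \eqref{eq:2.1.1}.

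The only real subtlety is a bookkeeping one: the cited theorem is stated for a fixed Hermitian manifold, and here the relevant metric data ($g^{T\Sigma}$, $h$, the induced metric on $K_\Sigma^*$) are honestly smooth on all of $\Sigma$, so I should simply emphasize that the constants in the $\mathcal{O}_K(1)$ term depend on $K$ through $C^\infty$-bounds on these data over a fixed compact neighborhood of $K$ — this is exactly the content of the uniformity clause in \cite[Theorem 6.1.1]{MM07}. I do not expect a genuine obstacle here; the work is entirely in confirming the model computation near the cusps, which is where \ref{item:beta} (fixing $\omega_\Sigma = \omega_{\mathbb{D}^*}$ and $iR^L = \omega_\Sigma$ near each puncture) does all the heavy lifting.
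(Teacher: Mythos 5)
Your proposal is correct and matches the paper's approach: the paper proves Theorem \ref{thm:2.1.1} precisely by invoking \cite[Theorem 6.1.1]{MM07} under Assumptions \ref{item:alpha} and \ref{item:beta}, and you have simply made explicit the verification of the hypotheses \eqref{A1} (noting, as you could add, that on a Riemann surface $\partial\omega = 0$ automatically for dimension reasons, so the third bound is vacuous) and the identification of the leading coefficient $b_0(x) = a(x)/2\pi$ in the dimension-one specialization of the Ma--Marinescu expansion.
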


In \cite{AMM:20}, an asymptotic expansion of $B_{p}$ near 
the punctured points is obtained by studying the Bergman kernel 
expansion for the punctured disk endowed with the Poincar\'{e} metric. 
Furthermore, they obtained a global optimal upper bound for $B_{p}$. By \cite[Corollary 1.4]{AMM:20}, we have
\begin{equation}
	\sup_{x\in \Sigma}B_{p}(x,x)=\Big(\frac{p}{2\pi}\Big)^{3/2}+\mathcal{O}(p), 
	\;\mathrm{as\;}p\rightarrow +\infty.
	\label{eq:2.1.3}
\end{equation}

\begin{remark}
The uniform upper-bound of $B_{p}(x,x)$ given in \eqref{eq:2.1.3} plays 
an important role in the Proof of Theorem \ref{thm:1.3.2}. 
In the absence of such uniform upper-bound on the noncompact 
manifold, we should assume $U$ to be relatively compact 
in Theorem \ref{thm:1.3.2}. 
As we will see in Subsection \ref{subs3.6}, the upper-bound of 
$B_{p}(x,x)$ in \eqref{eq:2.1.3} is also necessary in the proof of 
\eqref{eq:1.2.5}.
\end{remark}

\subsection{Off- and near-diagonal estimates}
For the off- and near-diagonal expansion of $B_{p}$, we still apply 
\cite[Theorem 6.1.1]{MM07} to our punctured Riemann surface.

\begin{proposition}[{\cite[Theorem 6.1.1]{MM07}}]\label{prop:2.2.1}
	For any $\ell\in \mathbb{N}$ and $\delta>0$, for any compact subset 
	$K\subset \Sigma$, there exists $C_{\ell,\delta,K}>0$ such that for 
	all $p\in\bN$ and $x,y\in K$ with $\mathrm{dist}(x,y)\geq 
	\delta$,
	\begin{equation}
		|B_{p}(x,y)|\leq C_{l,\delta,K}\, p^{-\ell}.
		\label{eq:2.2.1paris}
	\end{equation}
	
	Fix any compact subset $K$, and for any $N\in \mathbb{N}$, there 
	exist $\varepsilon>0,$ functions  $\mathcal{F}_{r},$ and constants $C,C'>0$ such that for 
	$x_{0}\in K$, $v,v'\in (T_{x_{0}}\Sigma, g^{T\Sigma}_{x_{0}})$, 
	$\Vert v\Vert,\Vert v'\Vert\leq 2\varepsilon$, we have as 
	$p\to\infty$,
	\begin{equation}
		\begin{split}
			&\bigg|\frac{1}{p}B_{p}(\exp_{x_{0}}(v),\exp_{x_{0}}(v'))-\sum_{r=0}^{N}\mathcal{F}_{r}(\sqrt{p}v,\sqrt{p}v')\kappa^{-1/2}(v)\kappa^{-1/2}(v')p^{-r/2}\bigg|\\
			&\leq 
Cp^{-(N+1)/2}(1+\sqrt{p}\Vert v\Vert+\sqrt{p}\Vert 
v'\Vert)^{2N+6}\exp(-C'\sqrt{p}\Vert v-v'\Vert)+\mathcal{O}(p^{-\infty})\,.
\end{split}
\label{eq:2.2.2}
\end{equation}
\end{proposition}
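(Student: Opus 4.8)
The plan is to obtain this proposition as a direct specialization, to complex dimension one, of the off- and near-diagonal Bergman kernel asymptotics for complete Hermitian manifolds of Ma--Marinescu \cite[Theorems 4.2.1 and 6.1.1]{MM07}. Accordingly, the only real work is to check that $(\Sigma,\omega_{\Sigma},L,h)$ satisfies the hypotheses of that theorem, and then to read off its conclusion. Concretely, one must verify the three conditions in \eqref{A1} for $X=\Sigma$. Completeness of $(\Sigma,\omega_{\Sigma})$ and the bound $iR^{L}\geq\varepsilon_{0}\,\omega_{\Sigma}$ are built into Assumption \ref{item:beta}, which gives the first inequality of \eqref{A1} (e.g.\ with $\varepsilon_{1}=\varepsilon_{0}/2$). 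The third condition is vacuous, since every Hermitian metric on a Riemann surface is K\"ahler and hence $\partial\omega_{\Sigma}=0$.

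It remains to verify the determinant-curvature bound. For $K_{\Sigma}^{*}=T^{(1,0)}\Sigma$ equipped with the metric induced by $g^{T\Sigma}$ one has $iR^{\det}=\mathrm{Ric}_{\omega_{\Sigma}}$. On each cusp chart $V_{j}$, Assumption \ref{item:beta} gives $\omega_{\Sigma}=\omega_{\mathbb{D}^{*}}$, which is a metric of constant negative curvature, so $\mathrm{Ric}_{\omega_{\Sigma}}$ is a fixed negative constant times $\omega_{\Sigma}$ there; on the compact subset of $\Sigma$ obtained by removing fixed cusp neighborhoods, $h$ and $\omega_{\Sigma}$ are smooth, so $\mathrm{Ric}_{\omega_{\Sigma}}$ is bounded with respect to $\omega_{\Sigma}$. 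Hence $iR^{\det}\geq -C_{1}\,\omega_{\Sigma}$ on all of $\Sigma$ for a suitable $C_{1}>0$, and \eqref{A1} holds.

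With \eqref{A1} in force, \cite[Theorem 6.1.1]{MM07} applies and, uniformly on any fixed compact $K\subset\Sigma$, yields both assertions. The localization/spectral-gap estimate there shows that $B_{p}(x,y)$ decays faster than any power of $p$ (indeed at a Gaussian rate in $\sqrt{p}\,\mathrm{dist}(x,y)$, together with all its derivatives) whenever $x,y\in K$ and $\mathrm{dist}(x,y)\geq\delta$, which in particular gives \eqref{eq:2.2.1paris}. For the near-diagonal part, writing points near $x_{0}\in K$ in geodesic normal coordinates $v\mapsto\exp_{x_{0}}(v)$, \cite[Theorem 4.2.1]{MM07} provides the asymptotic expansion of $p^{-1}B_{p}(\exp_{x_{0}}(v),\exp_{x_{0}}(v'))$ with coefficients $\mathcal{F}_{r}(\sqrt{p}\,v,\sqrt{p}\,v')\,\kappa^{-1/2}(v)\kappa^{-1/2}(v')\,p^{-r/2}$, where $\kappa$ is the smooth positive volume density of $g^{T\Sigma}$ in these coordinates and the $\mathcal{F}_{r}$ are the universal functions of \cite{MM07} (polynomials in $v,v'$ times the model Bargmann--Fock kernel determined by $R^{L}_{x_{0}}$, with $\mathcal{F}_{0}$ the leading model Bergman term); the remainder after $N$ terms is bounded exactly by $Cp^{-(N+1)/2}(1+\sqrt{p}\Vert v\Vert+\sqrt{p}\Vert v'\Vert)^{2N+6}\exp(-C'\sqrt{p}\Vert v-v'\Vert)$ up to $\mathcal{O}(p^{-\infty})$, with $\varepsilon,C,C'$ depending only on $K$ and $N$. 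This is \eqref{eq:2.2.2}. There is no genuine obstacle here: the proposition is a reformulation of a cited theorem, and the only point requiring a little care is confirming that the Ricci lower bound is not destroyed by the cusp degeneration, which is immediate because $\omega_{\Sigma}=\omega_{\mathbb{D}^{*}}$ has constant curvature near the punctures. (The on-diagonal expansion $B_{p}(x,x)=\frac{p}{2\pi}a(x)+\mathcal{O}_{K}(1)$ of Theorem \ref{thm:2.1.1} is the $v=v'=0$ specialization.)
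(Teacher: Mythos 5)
Your proposal is correct and follows essentially the same route as the paper, which treats Proposition \ref{prop:2.2.1} as a direct specialization of \cite[Theorems 4.2.1 and 6.1.1]{MM07} (the bracketed citation in the proposition's header signals exactly this). Your verification that Assumptions \ref{item:alpha} and \ref{item:beta} imply the hypotheses of \eqref{A1} — completeness and $iR^{L}\geq\varepsilon_{0}\omega_{\Sigma}$ from \ref{item:beta}, $\partial\omega_{\Sigma}=0$ automatically in complex dimension one, and the Ricci lower bound from the constant-curvature Poincar\'e form near the cusps together with smoothness on the compact complement — is precisely the hypothesis check the paper leaves implicit, and the remainder exponent $2N+6$ matches the general formula $2(N+m)+4$ at $m=1$ as used in Section \ref{section:higherdim}.
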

The norm in left-hand side of \eqref{eq:2.2.2} is taken at the point $x_{0}$ 
after trivializing the line bundle $L$ near $x_{0}$ along the 
radial geodesic path centered at $x_{0}$ with respect to the 
Chern connection of $(L,h)$. The function $\kappa$, 
and the functions $\mathcal{F}_{r}$, 
$r\in\bN$, all depending 
smoothly on $x_{0}$, will be 
described more explicitly below (cf.\ \eqref{eq:2.2.3July},\;\eqref{eq:2.2.3}). 
The term $\mathcal{O}(p^{-\infty})$ is used to denote a decay faster than 
$p^{-\ell}$ for any $\ell\in\bN$.

As in the above proposition, for $x_{0}\in K$ and $\varepsilon>0$ 
sufficiently small, we can identify the Euclidean ball 
$B^{T_{x_{0}}\Sigma}(0,4\varepsilon)\subset (T_{x_{0}}\Sigma, g^{T\Sigma}_{x_{0}})$ 
with the geodesic ball $B^{\Sigma}(x_{0},4\varepsilon)\subset \Sigma$ via the 
local geodesic coordinate centered at $x_{0}$. Let 
$g^{\Sigma_{0}}$ be a metric on $\Sigma_{0}:=T_{x_{0}}\Sigma\simeq \R^2$ which 
coincides  with $g^{T\Sigma}$ on 
$B^{T_{x_{0}}\Sigma}(0,2\varepsilon)$, and $g^{T\Sigma}_{x_{0}}$ 
outside $B^{T_{x_{0}}\Sigma}(0,4\varepsilon)$. Let $dv_{\Sigma_{0}}$ 
be the Riemannian volume form of $(\Sigma_{0}, g^{\Sigma_{0}})$, and 
let $dv_{T_{x_{0}}\Sigma}$ denote the Riemannian volume form of  
$(T_{x_{0}}\Sigma, g^{T\Sigma}_{x_{0}})$. The function $\kappa$ is a 
positive function on $T_{x_{0}}\Sigma$ such that for $v\in 
T_{x_{0}}\Sigma$,
\begin{equation}
	dv_{\Sigma_{0}}(v)=\kappa(v)dv_{T_{x_{0}}\Sigma}(v).
	\label{eq:2.2.3July}
\end{equation}
In particular, $\kappa(0)=1$. Moreover, when $x_{0}$ varies in the 
compact set $K$, for $v\in 
T_{x_{0}}\Sigma$ with $\Vert v\Vert\leq 2\varepsilon$, 
the function $\kappa(v)$ is uniformly bounded.

To describe the function $\mathcal{F}_{r}$, we need to explain the 
complex coordinate near $x_{0}$. Let $\mathbf{f}$ denote a unit 
vector of $T^{(1,0)}_{x_{0}}\Sigma$, i.e. 
$g^{T\Sigma}_{x_{0}}(\mathbf{f},\overline{\mathbf{f}})=1$. Set
\begin{equation}
	\mathbf{e}_{1}=\frac{1}{\sqrt{2}}(\mathbf{f}+\overline{\mathbf{f}}),\;  
	\mathbf{e}_{2}=\frac{i}{\sqrt{2}}(\mathbf{f}-\overline{\mathbf{f}}).
	\label{eq:2.2.3DLM}
\end{equation}
Then $\{\mathbf{e}_{1},\mathbf{e}_{2}\}$ is an oriented orthonormal 
basis of the (real) tangent space $(T_{x_{0}}\Sigma, 
g^{T\Sigma}_{x_{0}})$. If 
$v=v_{1}\mathbf{e}_{1}+v_{2}\mathbf{e}_{2}\in T_{x_{0}}\Sigma $, 
$v_{1}, v_{2}\in\R$, then
\begin{equation}
	v=(v_{1}+i v_{2})\frac{1}{\sqrt{2}}\mathbf{f} +(v_{1}-i 
	v_{2})\frac{1}{\sqrt{2}}\overline{\mathbf{f}},
\end{equation}
and we associate it with a complex coordinate 
$z=v_{1}+i v_{2}\in\C$. In this coordinate, we have 
$\frac{\partial}{\partial z}= \frac{1}{\sqrt{2}}\mathbf{f}$, and 
$\Vert \frac{\partial}{\partial z}\Vert=|\frac{\partial}{\partial z}|_{g^{T\Sigma}}=\frac{1}{2}$. Note that, 
for $z\in\C$, $|z|$ still denotes the standard norm of $z$ as 
complex number.

Now, for $v,v'\in T_{x_{0}}\Sigma$, let $z$, $z'$ denote the 
corresponding complex coordinates. Set
\begin{equation}
	\mathcal{F}_{r}(v,v')=\mathcal{P}(v,v')\mathcal{J}_{r}(v,v'),
	\label{eq:2.2.3}
\end{equation}
where
\begin{equation} \label{eq:Pdef}
	\mathcal{P}(v,v')=\frac{a(x_{0})}{2\pi}\exp \Big (-\frac{1}{4}a(x_{0} )(|z|^{2}+|z'|^{2}-2z\overline{z}') \Big),
\end{equation}
and 
\begin{align} \label{eq:Jr}
\begin{split}
\mathcal{J}_{r}(v,v') &\text{ is a polynomial in $v,v'$ of degree at most $ 3r$,} \\
&\text{whose 
coefficients are smooth in $x_{0}\in \Sigma$.}
\end{split}
\end{align} 
In particular,
\begin{equation} \label{eq:J0}
	\mathcal{J}_{0}=1.
\end{equation}

The following lemma is elementary.
\begin{lemma}\label{lm:2.2.2}
	The norm of $\mathcal{P}$ satisfies
	\begin{equation}
		|\mathcal{P}(v,v')|=\frac{a(x_{0})}{2\pi}\exp \Big(-\frac{1}{4}a(x_{0})\Vert v-v'\Vert ^{2} \Big).
		\label{eq:2.2.5}
	\end{equation}
\end{lemma}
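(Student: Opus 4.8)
The final statement is Lemma~\ref{lm:2.2.2}, computing the modulus of $\mathcal{P}(v,v')$. The plan is a direct computation starting from the definition \eqref{eq:Pdef} of $\mathcal{P}$, where $z,z'\in\C$ are the complex coordinates attached to $v,v'\in T_{x_0}\Sigma$ via the oriented orthonormal frame $\{\mathbf{e}_1,\mathbf{e}_2\}$ of \eqref{eq:2.2.3DLM}. Since $a(x_0)>0$ is real, the factor $\frac{a(x_0)}{2\pi}$ in front is already its own modulus, so the only task is to identify the modulus of the exponential factor $\exp\big(-\tfrac14 a(x_0)(|z|^2+|z'|^2-2z\overline{z}')\big)$.

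First I would observe that $|\exp(w)| = \exp(\operatorname{Re} w)$ for $w\in\C$, so I need the real part of $-\tfrac14 a(x_0)(|z|^2+|z'|^2-2z\overline{z}')$. The terms $|z|^2$ and $|z'|^2$ are real, and $\operatorname{Re}(2z\overline{z}') = 2\operatorname{Re}(z\overline{z}') = z\overline{z}'+\overline{z}z'$. Hence
\begin{equation*}
\operatorname{Re}\big(|z|^2+|z'|^2-2z\overline{z}'\big) = |z|^2+|z'|^2 - z\overline{z}' - \overline{z}z' = |z-z'|^2.
\end{equation*}
This gives $|\mathcal{P}(v,v')| = \tfrac{a(x_0)}{2\pi}\exp\big(-\tfrac14 a(x_0)|z-z'|^2\big)$.

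Second, I would translate $|z-z'|$ back into the Riemannian norm $\Vert v-v'\Vert$. By construction, if $v = v_1\mathbf{e}_1 + v_2\mathbf{e}_2$ then $z = v_1 + iv_2$, and $\{\mathbf{e}_1,\mathbf{e}_2\}$ is $g^{T\Sigma}_{x_0}$-orthonormal, so $\Vert v\Vert^2 = v_1^2+v_2^2 = |z|^2$; the map $v\mapsto z$ is a linear isometry from $(T_{x_0}\Sigma, g^{T\Sigma}_{x_0})$ onto $(\C,|\cdot|)$. Applying this to $v-v'$ (whose complex coordinate is $z-z'$) yields $|z-z'| = \Vert v-v'\Vert$, and substituting gives exactly \eqref{eq:2.2.5}.

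There is essentially no obstacle here; the lemma is genuinely elementary, as the paper states. The only point requiring a word of care is keeping straight that the complex coordinate is defined precisely so that the coordinate map is an isometry onto $\C$ — this is what makes the passage from $|z-z'|$ to $\Vert v-v'\Vert$ legitimate — and that $a(x_0)$ being a positive real number lets the prefactor pass through the modulus unchanged.
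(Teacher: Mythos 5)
Your proof is correct and follows essentially the same route as the paper: both reduce to the identity $\operatorname{Re}(|z|^{2}+|z'|^{2}-2z\overline{z}')=|z-z'|^{2}$ together with the observation that the coordinate map $v\mapsto z$ is an isometry, so $|z-z'|=\Vert v-v'\Vert$. The paper phrases the key step as the decomposition $|z-z'|^{2}=|z|^{2}+|z'|^{2}-2z\overline{z}'+2i\Im(z\overline{z}')$, while you extract the real part directly, but this is the same computation.
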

\begin{proof}
	This follows directly from \eqref{eq:Pdef} in combination with the formula
	\begin{equation}
		\begin{split}
			|z-z'|^{2}&=|z|^{2}+|z'|^{2}-2\Re(z\overline{z}')\\
			&=|z|^{2}+|z'|^{2}-2z\overline{z}' +2i\Im(z\overline{z}'),
		\end{split}
		\label{eq:2.2.6}
	\end{equation}
	where $\Re(\cdot)$, $\Im(\cdot)$ denote, respectively, the real 
	and imaginary parts.
	By definition, we have $|z-z'|=\Vert v-v'\Vert $.
\end{proof}

\subsection{Proof of Theorem \ref{thm:1.3.1}}\label{section2.3DLM}
We divide our proof into two steps as follows. 

\noindent
\textbf{Step 1:} We start with proving the first estimate in the 
theorem.
Note that $U$ is relatively compact in $\Sigma$, so 
$\overline{U}$ is compact and Proposition \ref{prop:2.2.1} is 
applicable. Let $\varepsilon>0$ be the sufficiently small quantity 
stated in the second part of Proposition \ref{prop:2.2.1}. Then by 
the first part of the same proposition, if 
$x,y\in U$ is such that $\mathrm{dist}(x,y)\geq \varepsilon$, we have
\begin{equation}
	|B_{p}(x,y)|\leq C_{k+1,\varepsilon,K}\, p^{-k-1}.
	\label{eq:3.1.1}
\end{equation}
We fix a large enough $p_{0}\in\mathbb{N}$ such that
\begin{equation}
	b\sqrt{\frac{\log{p_{0}}}{p_{0}}}\leq \frac{\varepsilon}{2}.
	\label{eq:3.1.2}
\end{equation}
For $p>p_{0}$, if $x,y\in U$ is such that 
$b\sqrt{\frac{\log{p}}{p}}\leq\mathrm{dist}(x,y)< \varepsilon$, then 
we take advantage of the expansion in \eqref{eq:2.2.2} with $N=2k+1$, $x_{0}=x$, 
$v=0$, 
$y=\exp_{x}(v')$, and $v'\in T_{x}\Sigma$, in order to obtain
\begin{equation}
	\begin{split}
		&\bigg|\frac{1}{p}B_{p}(x,y)-\sum_{r=0}^{2k+1}\mathcal{F}_{r}(0,\sqrt{p}v')\kappa^{-1/2}(v')p^{-r/2}\bigg|\\
		&\leq 
		Cp^{-k-1}(1+\sqrt{p}\Vert v'\Vert )^{4k+8}\exp(-C'\sqrt{p}\Vert v'\Vert )+\mathcal{O}(p^{-k-1}).
	\end{split}
	\label{eq:3.1.3}
\end{equation}
Now for $k \ge 1$, there exists a constant $C_{k}>0$ such that for any $r>0$,
\begin{equation}
	(1+r)^{4k+8}\exp(-C'r)\leq C_{k}.
\end{equation}
Note that $\Vert v'\Vert =\mathrm{dist}(x,y)$. By \eqref{eq:2.2.3}, Lemma 
\ref{lm:2.2.2} and the fact that $\Vert v'\Vert \geq 
b\sqrt{\frac{\log{p}}{p}}$, we get 
\begin{equation}
	|\mathcal{F}_{r}(0,\sqrt{p}v')|\leq C 
	p^{3r/2}\exp \Big (-\frac{\varepsilon_{0}}{4} 
	b^{2}\log{p} \Big ),
\end{equation}
where the constant $C>0$ does not depend on $x\in U$, and the 
number $\varepsilon_{0}$ from Assumption \ref{item:beta} can be taken 
smaller than 
$1$.

Since we take $b > \sqrt{16k/\varepsilon_{0}}$, then for $r=0,\ldots, 
2k+1$, we get
\begin{equation}
	|\mathcal{F}_{r}(0,\sqrt{p}v')\kappa^{-1/2}(v')p^{-r/2}|\leq C 
	p^{-(2k-1)}.
	\label{eq:3.1.6}
\end{equation}
Finally, combining \eqref{eq:3.1.1}--\eqref{eq:3.1.6}, we get the 
first estimate as wanted for any $p>1$.

\noindent
\textbf{Step 2:} We next prove the second part of our theorem. For 
this purpose, we only need to consider sufficiently large $p$ such that 
$b\sqrt{\frac{\log{p}}{p}}\leq 
\frac{\varepsilon}{2}$, where $\varepsilon$ is given in Step 1.

In the expansion \eqref{eq:2.2.2}, we take $x_{0}=x, y=\exp_{x}(v'), 
N=1$, so $\mathrm{dist}(x,y)=\Vert v'\Vert =|z'|\leq 
b\sqrt{\frac{\log{p}}{p}}$, where $z'\in\C$ is the complex coordinate 
for $v'$. We infer
\begin{equation}
	\begin{split}
		B_{p}(x,y)=&\,p\kappa^{-1/2}(v')\frac{a(x)}{2\pi}\exp \Big(-\frac{1}{4}a(x)p\Vert v'\Vert ^{2}\Big )\\
		&+p^{1/2}\kappa^{-1/2}(v')\frac{a(x)}{2\pi}\exp \Big (-\frac{1}{4}a(x)p\Vert v'\Vert ^{2} \Big)\mathcal{J}_{1}(0,\sqrt{p}v')+\mathcal{O}(|\log{p}|^{4}).
	\end{split}
	\label{eq:3.1.7}
\end{equation}
Since $\Vert v'\Vert \leq b\sqrt{\frac{\log{p}}{p}}$, using \eqref{eq:Jr} we infer that
$|\mathcal{J}_{1}(0,\sqrt{p}v')|\leq C |\log{p}|^{3/2}$. The previous in combination with \eqref{eq:2.1.1} then supplies us with
\begin{equation}
	\begin{split}
		\frac{\exp(\frac{1}{4}a(x)p\Vert v'\Vert 
		^{2})B_{p}(x,y)}{\sqrt{B_{p}(x,x)}\sqrt{B_{p}(y,y)}}&=\frac{pa(x)\kappa^{-1/2}(v')}{\sqrt{B_{p}(x,x)}\sqrt{B_{p}(\exp_{x}(v'),\exp_{x}(v'))}}\\
		&\quad +\mathcal{O}(p^{-1/2}|\log{p}|^{3/2} + 
		p^{-1}|\log{p}|^{4})\\
		&=1+\mathcal{O}(\Vert v'\Vert +p^{-1/2}|\log{p}|^{3/2} + 
		p^{-1}|\log{p}|^{4})\\
		&=1+o(1), \text{ as }p\rightarrow +\infty.
	\end{split}
	\label{eq:3.1.8bis}
\end{equation}

Note that in the definition of $P_{p}$ we have the Hermitian norm of 
$B_{p}(x,y)$. Since in the asymptotic expansion \eqref{eq:3.1.7} we 
have trivialized the line bundle near $x$ using the Chern 
connections, we have
\begin{equation}
	|B_{p}(x,y)|_{h^{p}_{x}\otimes 
	h^{p,*}_{y}}=B_{p}(x,y)+\mathcal{O}(\Vert v'\Vert ).
	\label{eq:3.1.9bis}
\end{equation}
Combining \eqref{eq:3.1.8bis} and \eqref{eq:3.1.9bis}, we get the 
estimate \eqref{eq:1.5.2July} by taking the term $G_{p}(x,y)$ to be 
the $o(1)$-term in the last equation in \eqref{eq:3.1.8bis}. This completes the proof of Theorem 
\ref{thm:1.3.1}.

\section{Proofs of our results for punctured Riemann 
surfaces}\label{section3DLM}
In the sequel, we adopt the following notation and conventions: for positive functions $f, g: \, \mathbb N \to \R,$ we write $f(p)\lesssim g(p)$ if there 
exists a constant $C>0$ (possibly depending on 
some given data) such that $f(p)\leq C g(p)$ for all (sufficiently 
large) $p \in \mathbb N.$ Similarly, we write 
$f(p)\gtrsim g(p)$ if $f(p)\ge c g(p)$ for some constant $c > 0$ and 
all (sufficiently 
large) $p \in \mathbb N.$ Moreover, we write $f(p)\simeq g(p)$ if both 
$f(p)\lesssim g(p)$ and $f(p)\gtrsim g(p)$ hold.

Since the computations of this section are also 
applicable in 
the higher dimensional case and for a relatively compact open subset 
$U$ as 
described in Subsection \ref{section1.6} and Section 
\ref{section:higherdim}, we will always emphasize the quantity $d_{p}$ 
appearing in various estimates of this section. We use the punctured 
surface $\Sigma$ as an important example for the case $d_{p}\simeq p$. Another 
advantage of $\Sigma$ is that due to the work of \cite{AMM:20} we 
can study the divisors in the open subset $U\subset \Sigma$ which is not relatively compact.

\subsection{Supremum of norm of random holomorphic 
sections}\label{section3.1DLM}

As introduced above, $s_{p}$ will denote a random section with probability 
measure $\Upsilon_{p}$. Then $\mathcal{M}^{U}_{p}(s_{p})$ is a 
positive random variable. In this subsection, we study the expectation
$\mathbb{E}[\mathcal{M}^{U}_{p}(s_{p})]$ to understand the {\it typical} 
value of $\mathcal{M}^{U}_{p}(s_{p})$.

For a vector $\eta^{p}=(\eta^{p}_{1},\ldots,\eta^{p}_{d_{p}})\in \C^{d_{p}}$, set 
$\Vert \eta^{p}\Vert ^{2}=\sum_{j=1}^{d_{p}}|\eta^{p}_{j}|^{2}$. For $x\in 
U$, and for $s_{p}=\sum_{j\in O_{p}} \eta^{p}_{j}S^{p}_{j}$, we 
have
\begin{equation}
	|s_{p}(x)|_{h^{p}}\leq \Vert \eta^{p}\Vert B_{p}(x,x)^{1/2}.
	\label{eq:5.2.1k}
\end{equation}

The bounds in \eqref{eq:5.1.3} also give the bounds for 
$\sigma_{p}^{2}$, $p\in \mathbb{N}$.
\begin{lemma}\label{lm:5.1.2}
	There exists a constant $K_{0}>0$ such that for $p\in \mathbb{N}$,
	\begin{equation}
		\sigma_{p}^{2}\leq K_{0}d_{p}^{2}.
		\label{eq:5.1.5new}
	\end{equation}
\end{lemma}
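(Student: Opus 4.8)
The plan is to deduce the estimate directly from the moment condition \eqref{eq:5.1.3} by an application of Jensen's inequality. Since $d_{p}\to\infty$ as $p\to\infty$ (for the punctured surface $d_{p}=p\deg(L)+\chi(\Sigma)$ with $\deg(L)=\int_{\Sigma}c_{1}(L,h)>0$; in the higher dimensional setting this follows from the on-diagonal Bergman kernel asymptotics of \cite[Theorem 6.1.1]{MM07}), we may fix $p_{1}\in\bN$ with $d_{p}\geq 2$ for all $p\geq p_{1}$. Fix such a $p$ and an index $1\leq j\leq d_{p}$. The function $t\mapsto t^{d_{p}/2}$ is convex on $[0,\infty)$ because $d_{p}/2\geq 1$, so Jensen's inequality applied to the nonnegative random variable $|\eta^{p}_{j}|^{2}$ yields
\begin{equation*}
	\big(\sigma_{p}^{2}\big)^{d_{p}/2}=\big(\mathbb{E}[|\eta^{p}_{j}|^{2}]\big)^{d_{p}/2}\leq \mathbb{E}\big[|\eta^{p}_{j}|^{d_{p}}\big]\leq C_{0}\,d_{p}^{\,d_{p}},
\end{equation*}
the last step being \eqref{eq:5.1.3}. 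Raising both sides to the power $2/d_{p}$ gives $\sigma_{p}^{2}\leq C_{0}^{2/d_{p}}\,d_{p}^{2}$. Since $0<2/d_{p}\leq 1$ for $p\geq p_{1}$, we have $C_{0}^{2/d_{p}}\leq\max\{1,C_{0}\}$, and therefore $\sigma_{p}^{2}\leq\max\{1,C_{0}\}\,d_{p}^{2}$ for all $p\geq p_{1}$.

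It then remains to deal with the finitely many indices $p<p_{1}$ (if any). For each such $p$ the variance $\sigma_{p}^{2}$ is finite by \eqref{eq:5.1.1} and $d_{p}\geq 1$, whence $\sigma_{p}^{2}\leq\sigma_{p}^{2}\,d_{p}^{2}$; setting $K_{1}=\max_{p<p_{1}}\sigma_{p}^{2}<\infty$ gives $\sigma_{p}^{2}\leq K_{1}\,d_{p}^{2}$ for these $p$ as well. Taking $K_{0}=\max\{\max\{1,C_{0}\},K_{1}\}$ establishes \eqref{eq:5.1.5new} for all $p\in\bN$.

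The argument is entirely elementary; the only point requiring attention is the direction of Jensen's inequality, which forces the exponent $d_{p}/2$ to be at least $1$ and hence the separate (and harmless) treatment of the finitely many small values of $p$.
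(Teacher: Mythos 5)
Your argument is exactly the paper's: the paper also applies Jensen's inequality to $|\eta^{p}_{j}|^{2}$ with the convex power $d_{p}/2$ to get $(\sigma_{p}^{2})^{d_{p}/2}\leq\mathbb{E}[|\eta^{p}_{j}|^{d_{p}}]$ and then invokes \eqref{eq:5.1.3}. Your handling of the finitely many small $p$ and the bound $C_{0}^{2/d_{p}}\leq\max\{1,C_{0}\}$ is slightly more explicit than the paper's, but the approach and key step are identical.
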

\begin{proof}
	Since $\mathbb{E}[\eta^{p}_{j}]=0$, then 
	$\sigma^{2}_{p}=\mathbb{E}[|\eta^{p}_{j}|^{2}]$. Let $p$ be 
	sufficiently large such that $d_{p}>2$. By Jensen's 
	inequality, we get
	\begin{equation}
		\mathbb{E}[|\eta^{p}_{j}|^{2}]^{d_{p}/2}\leq 
		\mathbb{E}[|\eta^{p}_{j}|^{d_{p}}].
		\label{eq:5.1.6new}
	\end{equation}
	Then \eqref{eq:5.1.5new} follows from the assumption 
	\eqref{eq:5.1.3} for $\eta^{p}_{j}$. 
\end{proof}

\begin{lemma}
	We have the following inequalities of moments of $\Vert \eta^{p}\Vert $ 
	for $p\in\mathbb{N}$ sufficiently large,
	\begin{equation}
		\begin{split}
			& c_{0}d_{p}\leq \mathbb{E}[\Vert \eta^{p}\Vert ^{2}]\leq K_{0} 
			d^{3}_{p},\\
			& \mathbb{E}[\Vert \eta^{p}\Vert ^{d_{p}}] \leq C_{0} 
			(d_{p})^{2d_{p}+1}.
		\end{split}
		\label{eq:5.2.2k}
	\end{equation}
	Therefore, we have the lower bound estimate for all $p\in\mathbb{N}$ sufficiently large
	\begin{equation}
		\mathbb{E}[\Vert \eta^{p}\Vert ]\geq C_{0}^{-\frac{1}{d_{p}-2}} 
		(d_{p})^{-\frac{2d_{p}+1}{d_{p}-2}}(c_0 
		d_{p})^{\frac{d_{p}-1}{d_{p}-2}}\gtrsim 
		(d_{p})^{-1-\frac{4}{d_{p}-2}}.
		\label{eq:5.2.2bis}
	\end{equation}
\end{lemma}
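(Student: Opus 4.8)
The plan is to prove the three displayed estimates in turn, the first two being elementary moment computations and the last following from interpolation. Throughout I work with $p$ large enough that $d_{p}>2$, which is legitimate since $d_{p}=p\deg(L)+\chi(\Sigma)\to\infty$; this is also why the conclusions are only claimed for $p$ sufficiently large. For the bounds on $\mathbb{E}[\Vert\eta^{p}\Vert^{2}]$: since the $\eta^{p}_{j}$ are independent and centered with common variance $\sigma_{p}^{2}$, one has $\mathbb{E}[\Vert\eta^{p}\Vert^{2}]=\sum_{j=1}^{d_{p}}\mathbb{E}[|\eta^{p}_{j}|^{2}]=d_{p}\sigma_{p}^{2}$, and then the variance bounds $\sigma_{p}^{2}\geq c_{0}$ from \eqref{eq:5.1.1} and $\sigma_{p}^{2}\leq K_{0}d_{p}^{2}$ from Lemma \ref{lm:5.1.2} immediately give $c_{0}d_{p}\leq\mathbb{E}[\Vert\eta^{p}\Vert^{2}]\leq K_{0}d_{p}^{3}$.

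For the $d_{p}$-th moment, I apply Jensen's inequality to the convex function $t\mapsto t^{d_{p}/2}$ (valid because $d_{p}/2\geq 1$): from $\bigl(\sum_{j=1}^{d_{p}}|\eta^{p}_{j}|^{2}\bigr)^{d_{p}/2}\leq d_{p}^{d_{p}/2-1}\sum_{j=1}^{d_{p}}|\eta^{p}_{j}|^{d_{p}}$, taking expectations and using the moment hypothesis \eqref{eq:5.1.3}, one obtains
\[
\mathbb{E}[\Vert\eta^{p}\Vert^{d_{p}}]\leq d_{p}^{d_{p}/2-1}\cdot d_{p}\cdot C_{0}d_{p}^{d_{p}}=C_{0}d_{p}^{3d_{p}/2}\leq C_{0}d_{p}^{2d_{p}+1},
\]
the last step because $3d_{p}/2\leq 2d_{p}$. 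Together with the previous paragraph this proves \eqref{eq:5.2.2k}.

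Finally, for \eqref{eq:5.2.2bis} I interpolate the $\mathcal{L}^{2}$-moment of the nonnegative variable $\Vert\eta^{p}\Vert$ between its $\mathcal{L}^{1}$- and $\mathcal{L}^{d_{p}}$-moments. With $\lambda=\frac{d_{p}}{2(d_{p}-1)}\in(0,1)$ one has $\frac{1}{2}=\frac{1-\lambda}{1}+\frac{\lambda}{d_{p}}$, so Hölder's inequality yields $\mathbb{E}[\Vert\eta^{p}\Vert^{2}]^{1/2}\leq\mathbb{E}[\Vert\eta^{p}\Vert]^{1-\lambda}\mathbb{E}[\Vert\eta^{p}\Vert^{d_{p}}]^{\lambda/d_{p}}$. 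Solving for $\mathbb{E}[\Vert\eta^{p}\Vert]$ and using $1-\lambda=\frac{d_{p}-2}{2(d_{p}-1)}$ --- whence $\frac{1}{2(1-\lambda)}=\frac{d_{p}-1}{d_{p}-2}$ and $\frac{\lambda}{d_{p}(1-\lambda)}=\frac{1}{d_{p}-2}$ --- gives $\mathbb{E}[\Vert\eta^{p}\Vert]\geq\mathbb{E}[\Vert\eta^{p}\Vert^{2}]^{(d_{p}-1)/(d_{p}-2)}\mathbb{E}[\Vert\eta^{p}\Vert^{d_{p}}]^{-1/(d_{p}-2)}$, and inserting $\mathbb{E}[\Vert\eta^{p}\Vert^{2}]\geq c_{0}d_{p}$ together with $\mathbb{E}[\Vert\eta^{p}\Vert^{d_{p}}]\leq C_{0}d_{p}^{2d_{p}+1}$ produces exactly the first inequality of \eqref{eq:5.2.2bis}. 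The asymptotic then follows because $C_{0}^{-1/(d_{p}-2)}\to 1$ and $c_{0}^{(d_{p}-1)/(d_{p}-2)}$ stays bounded away from $0$, while the net exponent of $d_{p}$ is $\frac{d_{p}-1}{d_{p}-2}-\frac{2d_{p}+1}{d_{p}-2}=-\frac{d_{p}+2}{d_{p}-2}=-1-\frac{4}{d_{p}-2}$. There is no genuine obstacle here; the only point needing care is the bookkeeping of exponents in the interpolation step so that the bound emerges in precisely the claimed form.
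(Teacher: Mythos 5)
Your proof is correct and follows essentially the same route as the paper's. The only differences are cosmetic: you bound the $d_{p}$-th moment by the power-mean (Jensen) inequality rather than by the Hölder split used in the paper (which in any case only needs the relaxed exponent $d_{p}^{d_{p}}$), and you derive the lower bound on $\mathbb{E}[\Vert\eta^{p}\Vert]$ by moment interpolation of the $\mathcal{L}^{2}$-norm between $\mathcal{L}^{1}$ and $\mathcal{L}^{d_{p}}$, whereas the paper writes $\Vert\eta^{p}\Vert^{2}=\Vert\eta^{p}\Vert^{a}\Vert\eta^{p}\Vert^{2-a}$ with $a=\tfrac{d_{p}-2}{d_{p}-1}$ and applies Hölder with the conjugate pair $\bigl(\tfrac{d_{p}-1}{d_{p}-2},\,d_{p}-1\bigr)$ --- an algebraically identical computation.
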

\begin{proof}
	Note that by the assumption (cf.\ \eqref{eq:5.1.1}) for 
	$\eta^{p}_{j},$  $1 \le j \le d_p,$, we have
	\begin{equation}
		\mathbb{E}[\Vert \eta^{p}\Vert ^{2}]=d_{p}\sigma^{2}_{p}.
		\label{eq:5.2.4bonn}
	\end{equation}
	Then the first inequality in \eqref{eq:5.2.2k} follows directly from 
	\eqref{eq:5.1.1} and \eqref{eq:5.1.5new}, we now prove the second one. 
	
	For $p$ 
	sufficiently large, we have $d_{p}>3$, set 
	\begin{equation}
		q=\frac{1}{1-\frac{2}{d_{p}}}>1.
		\label{eq:5.2.3k}
	\end{equation}
	Then 
	\begin{equation}
		q\frac{d_{p}}{2}\leq d_{p}.
		\label{eq:5.2.4k}
	\end{equation}
	
	By H\"{o}lder's inequality, we have
	\begin{equation}
		\Vert \eta^{p}\Vert ^{d_{p}}\leq \Big (\sum_{j\in O_{p}} 
		|\eta^{p}_{j}|^{d_{p}} \Big) (d_{p})^{q\frac{d_{p}}{2}}.
		\label{eq:5.2.5k}
	\end{equation}
	Then the second inequality in \eqref{eq:5.2.2k} follows directly 
	from \eqref{eq:5.1.3}, \eqref{eq:5.2.4k} and \eqref{eq:5.2.5k}.
	
	Set $a=\frac{d_{p}-2}{d_{p}-1}$, 
	$p_{1}=1/a=\frac{d_{p}-1}{d_{p}-2}$, $p_{2}=d_{p}-1$, then by 
	H\"{o}lder's inequality for the pair $(p_{1},p_{2})$, we get
	\begin{equation}
		\begin{split}
			\mathbb{E}[\Vert \eta^{p}\Vert ^{2}]&=\mathbb{E}[\Vert \eta^{p}\Vert ^{a}\Vert \eta^{p}\Vert ^{2-a}]\\
			&\leq 
			\mathbb{E}[\Vert \eta^{p}\Vert ]^{1/p_{1}}\mathbb{E}[\Vert \eta^{p}\Vert ^{d_{p}}]^{1/p_{2}}.
		\end{split}
		\label{eq:5.2.5bis}
	\end{equation}
	Using \eqref{eq:5.2.2k}, the inequality \eqref{eq:5.2.2bis} 
	follows. This completes our proof. 
\end{proof}

By \eqref{eq:2.1.3}, there is a constant $p_{0}\in\mathbb{N}$ 
(independent of open set $U$) such 
that for all $p>p_{0}$,
\begin{equation}
	\sup_{x\in U} B_{p}(x,x)< \frac{1}{\pi\sqrt{2\pi}}p^{3/2}.
	\label{eq:5.2.6}
\end{equation}
Then we have
\begin{equation}
	\mathcal{M}^{U}_{p}(s_{p})\leq \Vert \eta^{p}\Vert p^{3/4}.
	\label{eq:5.2.7}
\end{equation}

\begin{proposition}\label{prop:5.2.2}
	We have the following inequalities, for sufficiently large $p$,
	\begin{equation}
		\begin{split}
			& \mathbb{E}[\mathcal{M}^{U}_{p}(s_{p})]\leq 
			(K_{0})^{1/2} p^{3/4} d^{3/2}_{p}\lesssim p^{9/4},\\
			& \mathbb{E}[\mathcal{M}^{U}_{p}(s_{p})^{d_{p}}] \leq C_{0} 
			(d_{p})^{2d_{p}+1} p^{3d_{p}/4}\lesssim (Cp)^{Cd_{p}}, 
			\mathrm{\;for\;some\;constant\;}C>0.
		\end{split}
		\label{eq:5.2.9k}
	\end{equation}
	Moreover,
	\begin{equation}
		\mathbb{E}[\mathcal{M}^{U}_{p}(s_{p})]\gtrsim p^{-2}
		\label{eq:5.2.10bis}
	\end{equation}
\end{proposition}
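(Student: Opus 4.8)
The proposition has two easy parts and one substantive part. The two upper bounds in \eqref{eq:5.2.9k} follow at once from \eqref{eq:5.2.7}: taking $d_p$-th powers and expectations there and using the second line of \eqref{eq:5.2.2k} gives $\mathbb{E}[\mathcal{M}^U_p(s_p)^{d_p}]\leq p^{3d_p/4}\,\mathbb{E}[\|\eta^p\|^{d_p}]\leq C_0(d_p)^{2d_p+1}p^{3d_p/4}$, which is $\lesssim(Cp)^{Cd_p}$ for a suitable $C>0$ since $d_p\simeq p$; and taking expectations in \eqref{eq:5.2.7}, then applying Jensen's inequality and the first line of \eqref{eq:5.2.2k}, gives $\mathbb{E}[\mathcal{M}^U_p(s_p)]\leq p^{3/4}\,\mathbb{E}[\|\eta^p\|^2]^{1/2}\leq (K_0)^{1/2}p^{3/4}d_p^{3/2}\lesssim p^{9/4}$. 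So the real content is the lower bound \eqref{eq:5.2.10bis}.

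For \eqref{eq:5.2.10bis} the plan is to bound $\mathcal{M}^U_p(s_p)$ from below by its value at a single point and then interpolate between its second and $d_p$-th moments, in the spirit of the derivation of \eqref{eq:5.2.2bis}. First I would fix an arbitrary $x_0\in U$. Since $\mathcal{M}^U_p(s_p)\geq |s_p(x_0)|_{h^p}$ pointwise, $\mathbb{E}[\mathcal{M}^U_p(s_p)^2]\geq \mathbb{E}[|s_p(x_0)|_{h^p}^2]$; expanding $s_p(x_0)=\sum_{j=1}^{d_p}\eta^p_j S^p_j(x_0)$ and using that the $\eta^p_j$ are independent, centered, and of common variance $\sigma_p^2$, the mixed terms drop out and $\mathbb{E}[|s_p(x_0)|_{h^p}^2]=\sigma_p^2\sum_{j}|S^p_j(x_0)|_{h^p}^2=\sigma_p^2\,B_p(x_0,x_0)$. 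By \eqref{eq:5.1.1} and the on-diagonal expansion \eqref{eq:2.1.1} (Theorem~\ref{thm:2.1.1}), $\sigma_p^2 B_p(x_0,x_0)\geq c_0\big(\tfrac{\varepsilon_0}{2\pi}p+\mathcal{O}(1)\big)$, so $\mathbb{E}[\mathcal{M}^U_p(s_p)^2]\gtrsim p$ for $p$ large.

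Next I would combine this with the $d_p$-th moment bound just obtained. Applying H\"older's inequality with the conjugate exponents $p_1=\tfrac{d_p-1}{d_p-2}$ and $p_2=d_p-1$ to the splitting $\mathcal{M}^U_p(s_p)^2=\mathcal{M}^U_p(s_p)^{(d_p-2)/(d_p-1)}\cdot\mathcal{M}^U_p(s_p)^{d_p/(d_p-1)}$ (chosen so that the first exponent times $p_1$ equals $1$ and the second times $p_2$ equals $d_p$) gives
\[
\mathbb{E}[\mathcal{M}^U_p(s_p)^2]\leq \mathbb{E}[\mathcal{M}^U_p(s_p)]^{\frac{d_p-2}{d_p-1}}\,\mathbb{E}[\mathcal{M}^U_p(s_p)^{d_p}]^{\frac{1}{d_p-1}},
\]
hence
\[
\mathbb{E}[\mathcal{M}^U_p(s_p)]\geq \mathbb{E}[\mathcal{M}^U_p(s_p)^2]^{\frac{d_p-1}{d_p-2}}\,\mathbb{E}[\mathcal{M}^U_p(s_p)^{d_p}]^{-\frac{1}{d_p-2}}.
\]
Since $\frac{d_p-1}{d_p-2}\geq 1$ and $\mathbb{E}[\mathcal{M}^U_p(s_p)^2]\gtrsim p$, the first factor is $\gtrsim p$. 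For the second, using $d_p\simeq p$ together with $\tfrac{2d_p+1}{d_p-2}=2+\tfrac{5}{d_p-2}$ and $\tfrac{3d_p}{4(d_p-2)}=\tfrac34+\tfrac{3}{2(d_p-2)}$ one obtains $\mathbb{E}[\mathcal{M}^U_p(s_p)^{d_p}]^{1/(d_p-2)}\lesssim d_p^2\,p^{3/4}\lesssim p^{11/4}$, the leftover factors $C_0^{1/(d_p-2)}$, $d_p^{5/(d_p-2)}$ and $p^{3/(2(d_p-2))}$ being bounded uniformly in $p$. Therefore $\mathbb{E}[\mathcal{M}^U_p(s_p)]\gtrsim p\cdot p^{-11/4}=p^{-7/4}\gtrsim p^{-2}$, which establishes \eqref{eq:5.2.10bis}. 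I expect no genuine obstacle: the only points requiring care are the bookkeeping of these lower-order powers of $p$ and $d_p$ in the final step, and the fact that it is precisely the on-diagonal growth $B_p(x_0,x_0)\simeq p$ from Theorem~\ref{thm:2.1.1} that compensates the loss incurred by passing through the $d_p$-th moment.
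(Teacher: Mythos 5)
Your proof is correct and follows essentially the same route as the paper: the upper bounds come straight from \eqref{eq:5.2.7} and \eqref{eq:5.2.2k}, and the lower bound is obtained by interpolating (via H\"older with exponents $\tfrac{d_p-1}{d_p-2}$ and $d_p-1$) between the second moment $\sigma_p^2 B_p(x_0,x_0)\gtrsim p$ at a fixed point and the $d_p$-th moment bound. The only cosmetic difference is that you run the H\"older interpolation on $\mathcal{M}^U_p(s_p)$ directly (bounding its second moment below by $\mathbb{E}[|s_p(x_0)|^2]$), whereas the paper interpolates on $|s_p(x_0)|_{h^p}$ and then invokes $\mathbb{E}[\mathcal{M}^U_p(s_p)]\geq\mathbb{E}[|s_p(x_0)|_{h^p}]$; the arithmetic is identical and yields the same $p^{-7/4}$ leading order.
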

\begin{proof}
	The inequalities in \eqref{eq:5.2.9k} follow directly from 
	$d_{p}\simeq p$,
	\eqref{eq:5.2.2k}, \eqref{eq:5.2.7} and 
	\begin{equation}
		\mathbb{E}[\Vert \eta^{p}\Vert ]^{2}\leq\mathbb{E}[\Vert \eta^{p}\Vert ^{2}].
		\label{eq:5.2.10}
	\end{equation}
	Now we prove the lower bound in \eqref{eq:5.2.10bis}. We fix a 
	point $x_{0}\in U$. Then
	\begin{equation}
		|s_{p}|_{h^{p}}^{2}(x_{0})=\sum_{j,l\in 
		O_{p}}\eta^{p}_{j}\overline{\eta}^{p}_{l}h^{p}(S^{p}_{j}(x_{0}), 
		S^{p}_{l}(x_{0})),
		\label{eq:5.2.11s}
	\end{equation}
	and using \eqref{eq:5.1.1} we infer that
	\begin{equation}
		\mathbb{E}[|s_{p}(x_{0})|_{h^{p}}^{2}]=\sum_{j\in 
		O_{p}}\mathbb{E}[|\eta^{p}_{j}|^{2}]|S^{p}_{j}(x_{0})|^{2}_{h^{p}}=\sigma^{2}_{p}B_{p}(x_{0},x_{0})\geq c_{0} B_{p}(x_{0},x_{0}).
		\label{eq:5.2.12s}
	\end{equation}
	The second inequality of \eqref{eq:5.2.9k} implies
	\begin{equation}
		\mathbb{E}[|s_{p}(z_{0})|_{h^{p}}^{d_{p}}] \leq C_{0} 
		(d_{p})^{2d_{p}+1} p^{3d_{p}/4}.
		\label{eq:5.2.13s}
	\end{equation}
	By the H\"{o}lder's inequality as in \eqref{eq:5.2.5bis}, we get
	\begin{equation}
		\mathbb{E}[|s_{p}(x_{0})|_{h^{p}}]\geq 
		\big(c_{0}B_{p}(x_{0},x_{0})\big)^{\frac{d_{p}-1}{d_{p}-2}}\Big(\frac{1}{C_{0} 
		(d_{p})^{2d_{p}+1} 
		p^{3d_{p}/4}}\Big)^{\frac{1}{d_{p}-2}}\gtrsim 
		d_{p}^{-\frac{7/4+2/d_{p}}{1-2/d_{p}}}.
		\label{eq:5.2.14}
	\end{equation}
	Since $d_{p}\simeq p$ and $\mathbb{E}[|s_{p}(x_{0})|_{h^{p}}]\leq 
	\mathbb{E}[\mathcal{M}^{U}_{p}(s_{p})]$, we get 
	\eqref{eq:5.2.10bis}. This finishes our proof.
\end{proof}

\begin{remark}\label{rm:3.1.4paris}
	Note that the lower bound $p^{-2}$ in $\eqref{eq:5.2.10bis}$ is 
	clearly non-optimal. By \eqref{eq:5.2.14}, we get the following limit,
	\begin{equation}
		\liminf_{p\rightarrow 
		\infty}p^{7/4}\mathbb{E}[\mathcal{M}^{U}_{p}(s_{p})] \geq 
		c_{0}a(x_{0})>0,
		\label{eq:5.2.18}
	\end{equation}
	where we always have $a(x_{0})\geq 
	\varepsilon_{0}>0$. Since $x_{0}\in U$ is arbitrarily 
	chosen, then we get
	\begin{equation}
		\liminf_{p\rightarrow 
		\infty}p^{7/4}\mathbb{E}[\mathcal{M}^{U}_{p}(s_{p})] \geq 
		c_{0}\sup_{x\in U}a(x).
		\label{eq:5.2.19}
	\end{equation}
\end{remark}

\begin{remark}
	If we take a relatively compact open subset $U$ in $\Sigma$, then 
	the estimates in Proposition \ref{prop:5.2.2} can be improved as 
	follows, as $p$ sufficiently large,
	\begin{equation}
		\begin{split}
			p^{-3/2-\delta}\lesssim \mathbb{E}[\mathcal{M}^{U}_{p}(s_{p})]\lesssim p^{2},
		\end{split}
		\label{eq:5.2.20}
	\end{equation}
	where $\delta>0$ is any sufficiently small number.
\end{remark}

Applying the Chebyshev inequality to the second inequality in 
\eqref{eq:5.2.9k}, we get the following result.
\begin{corollary}\label{cor:3.1.6}
	There exists a constant $C>0$, such that for any sequence 
	$\{\lambda_{p}\}_{p\in \mathbb{N}}$  of strictly positive 
	numbers, we have
	\begin{equation}
		\begin{split}
			\Upsilon_{p}(\big\{s_{p}\;:\; 
			\mathcal{M}^{U}_{p}(s_{p})\geq \lambda_{p}\big\})\lesssim 
			e^{-d_{p}\log\lambda_{p}+Cd_{p}\log p}.
		\end{split}
		\label{eq:5.2.21}
	\end{equation}
\end{corollary}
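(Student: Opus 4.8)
The plan is to deduce this directly from a $d_p$-th moment estimate via Markov's inequality, so that essentially all the work has already been carried out in Proposition \ref{prop:5.2.2}. First I would note that $\mathcal{M}^{U}_{p}(s_{p})$ is a nonnegative random variable on $(H^{0}_{(2)}(\Sigma,L^{p}),\Upsilon_{p})$, so that for any strictly positive $\lambda_{p}$,
\[
	\Upsilon_{p}\big(\{s_{p}\;:\;\mathcal{M}^{U}_{p}(s_{p})\geq\lambda_{p}\}\big)
	=\Upsilon_{p}\big(\{s_{p}\;:\;\mathcal{M}^{U}_{p}(s_{p})^{d_{p}}\geq\lambda_{p}^{d_{p}}\}\big)
	\leq \lambda_{p}^{-d_{p}}\,\mathbb{E}\big[\mathcal{M}^{U}_{p}(s_{p})^{d_{p}}\big]
\]
by Markov's inequality applied to the nonnegative random variable $\mathcal{M}^{U}_{p}(s_{p})^{d_{p}}$.

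Next I would insert the second bound in \eqref{eq:5.2.9k}, that is, $\mathbb{E}[\mathcal{M}^{U}_{p}(s_{p})^{d_{p}}]\lesssim (Cp)^{Cd_{p}}$ for a suitable constant $C>0$ (valid for $p$ sufficiently large, using $d_{p}\simeq p$), to obtain
\[
	\Upsilon_{p}\big(\{s_{p}\;:\;\mathcal{M}^{U}_{p}(s_{p})\geq\lambda_{p}\}\big)
	\lesssim \lambda_{p}^{-d_{p}}(Cp)^{Cd_{p}}.
\]
Finally I would pass to exponential form: $\lambda_{p}^{-d_{p}}=e^{-d_{p}\log\lambda_{p}}$ and $(Cp)^{Cd_{p}}=e^{Cd_{p}\log C+Cd_{p}\log p}$; since $d_{p}\to\infty$ the constant term $Cd_{p}\log C$ is absorbed into $Cd_{p}\log p$ after possibly enlarging $C$ (for the finitely many small $p$ the bound is trivial, being dominated by the implicit constant), which yields the claimed estimate
\[
	\Upsilon_{p}\big(\{s_{p}\;:\;\mathcal{M}^{U}_{p}(s_{p})\geq\lambda_{p}\}\big)
	\lesssim e^{-d_{p}\log\lambda_{p}+Cd_{p}\log p}.
\]

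There is no genuine obstacle here; the only mild point is the bookkeeping of constants needed to merge the harmless additive term $Cd_{p}\log C$ into $Cd_{p}\log p$, which is legitimate precisely because $d_{p}$ grows with $p$. The substance of the argument lies entirely in the moment bound \eqref{eq:5.2.9k}, obtained from the pointwise estimate \eqref{eq:5.2.7} together with the moment assumptions \eqref{eq:5.1.2}--\eqref{eq:5.1.3} on the coefficients.
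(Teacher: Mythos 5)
Your proposal is correct and is essentially the paper's own argument: the paper states that \eqref{eq:5.2.21} follows by applying Chebyshev's inequality (i.e.\ Markov's inequality for the $d_p$-th power, exactly as you wrote) to the second bound in \eqref{eq:5.2.9k}, and the remaining step is the same bookkeeping of constants you describe.
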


\subsection{Uniform bound on the marginal density function}\label{section3.2DLM}
In this subsection, we prove an 
important consequence of \eqref{eq:5.1.2}, i.e., an upper bound on the marginal densities of $\Upsilon_{p}$. We 
now fix a $p\in \mathbb{N}$. Let $V\subset \C^{d_{p}}$ be a 
$\C$-subspace of dimension $n\leq d_{p}$, and let $V^{\perp}\subset 
\C^{d_{p}}$ denote its orthogonal subspace with respect to the 
standard Hermitian metric on $\C^{d_{p}}$. 

If $v\in \C^{d_{p}}$, let 
$v=v_{0}+v_{1}$, $v_{0}\in V, v_{1}\in V^{\perp}$ denote the 
orthogonal decomposition of $v$. Let $\mathrm{dV}_{0}$, $\mathrm{dV}_{1}$ 
denote the standard Lebesgue volume elements on $V$, $V^{\perp}$ 
respectively such that
\begin{equation}
	\mathrm{dVol}_{p}(v)=\mathrm{dV}_{0}(v_{0})\mathrm{dV}_{1}(v_{1}).
	\label{eq:5.1.6}
\end{equation}

\begin{proposition}\label{prop:boundmarginal}
	For $v_{0}\in V$, set
	\begin{equation}
		g^{p}_{V}(v_{0})=\int_{v_{1}\in V^{\perp}} 
		f^{p}(v_{0}+v_{1}) \, \mathrm{dV}_{1}(v_{1}).
		\label{eq:5.1.7}
	\end{equation}
	Then $g^{p}_{V}$ is a probability density function on 
	$V$ such that
	\begin{equation}
		\sup_{v_{0}\in V}g^{p}_{V}(v_{0})\leq 
		M_{0}^{n}\binom{d_{p}}{n},
		\label{eq:5.1.8}
	\end{equation}
	where $M_{0}$ is the constant in \eqref{eq:5.1.2}, and $\binom{d_{p}}{n}=\frac{d_{p}!}{n!(d_{p}-n)!}\,\cdot$
\end{proposition}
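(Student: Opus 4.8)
The plan is to reduce the bound on the marginal density $g^p_V$ to the case where $V$ is a coordinate subspace, and then to estimate an integral of a product of one-dimensional densities by taking out the sup over the ``bad'' directions and keeping an $\mathcal L^1$-factor in the good ones. First I would verify that $g^p_V$ is a probability density: it is nonnegative, and by Tonelli's theorem together with \eqref{eq:5.1.6} we get $\int_V g^p_V\,\mathrm{dV}_0 = \int_{\C^{d_p}} f^p\,\mathrm{dVol}_p = 1$ since $f^p = \prod_j f^p_j$ is the product of the PDFs $f^p_j$.

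For the upper bound, the key observation is that $f^p(z_1,\dots,z_{d_p}) = \prod_{j=1}^{d_p} f^p_j(z_j)$ is a product over the \emph{standard} coordinate directions of $\C^{d_p}$, whereas $V$ is an arbitrary subspace, so one cannot integrate directly. The standard trick here is to cover: choose a subset $I \subset \{1,\dots,d_p\}$ of size $n$ such that the coordinate projection $\pi_I : \C^{d_p} \to \C^I$ restricts to a linear isomorphism $V \to \C^I$ (possible because $\dim_\C V = n$, pick $n$ coordinates on which $V$ projects isomorphically). Writing $I^c$ for the complement, for fixed $v_0 \in V$ the affine subspace $v_0 + V^\perp$ can be parametrized by the $I^c$-coordinates, and on it $f^p(v_0 + v_1) = \prod_{j\in I^c} f^p_j\big((v_1)_j + (v_0)_j\big)\cdot \prod_{j\in I} f^p_j\big(\text{(something)}_j\big)$. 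Bounding the $n$ factors indexed by $I$ by $M_0$ each via \eqref{eq:5.1.2}, the integral over $V^\perp$ of the remaining $(d_p - n)$ factors, after the change of variables induced by $\pi_{I^c}|_{V^\perp}$ (whose Jacobian must be controlled), is at most $1$ in $\mathcal L^1$. Summing the bound $M_0^n$ over all $\binom{d_p}{n}$ possible choices of $I$ — or rather, realizing that one should decompose $V^\perp$-integration into a sum over the $\binom{d_p}{n}$ coordinate $(d_p-n)$-planes appearing in a Cauchy–Binet-type expansion of the Lebesgue measure on $V^\perp$ — yields the factor $\binom{d_p}{n}$.

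The cleanest way to make the combinatorial factor appear rigorously is via the Cauchy–Binet formula for volumes: if $v_1 = (v_1)_{\mathrm{param}}$ is parametrized linearly over some $\R^{2(d_p-n)}$, then $\mathrm{dV}_1 = \big(\sum_{|J| = d_p - n} |\det M_J|^2\big)^{1/2}\,d(\text{param})$ where $M_J$ are the maximal minors, and one estimates $f^p(v_0+v_1) \le M_0^n \prod_{j \in J^c}\big(\text{factor}\big)$ on the piece associated with $J$, with $\int \prod_{j\in J^c} f^p_j \le 1$. Keeping track of the square root and the sum of $\binom{d_p}{d_p-n} = \binom{d_p}{n}$ terms, each contributing a factor $\le M_0^n$, gives \eqref{eq:5.1.8}. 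I expect the main obstacle to be precisely this geometric/combinatorial bookkeeping: splitting the flat measure $\mathrm{dV}_1$ on the tilted subspace $V^\perp$ into coordinate contributions while simultaneously using the product structure of $f^p$ and losing only the factor $M_0^n\binom{d_p}{n}$, rather than something larger. Everything else — the probability-density claim, the pointwise bound $f^p_j \le M_0$, and Tonelli/Fubini — is routine.
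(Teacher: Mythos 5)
Your final idea (what you call the ``cleanest way'') --- decomposing $\mathrm{dV}_1$ via Cauchy--Binet into coordinate contributions and bounding each by $M_0^n$ --- is a valid route, but it differs from the paper's. The paper parametrizes $V^\perp$ by an \emph{orthonormal} basis $E_1,\ldots,E_k$, so the Gram determinant is $1$, and then uses the Cauchy--Binet identity $\sum_{S}\det\bigl(W_{p,S}W_p^{*,S}\bigr)=1$ only as a \emph{pigeonhole} device: there is a single coordinate subset $S_V$ of size $k=d_p-n$ on which the (real) Jacobian is at least $1/\binom{d_p}{n}$, and the paper changes variables once, to the $S_V$-coordinates, bounds the remaining $n$ factors of $f^p$ by $M_0$ each, and divides by that Jacobian lower bound. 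Your version instead writes $\mathrm{dV}_1$ as a genuine sum over all $\binom{d_p}{n}$ coordinate subsets and estimates each summand separately; this avoids pigeonholing and is arguably more transparent. Your preliminary idea (a), by contrast, does not stand on its own: with a single fixed $I$ there is only one integral and no sum, so ``summing $M_0^n$ over the $\binom{d_p}{n}$ choices of $I$'' has no meaning without first invoking Cauchy--Binet either as a decomposition (your (b)) or as a pigeonhole (the paper).

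There is, however, a concrete slip in (b) that you must fix. For a complex-linear parametrization $T:\C^k\to V^\perp\subset\C^{d_p}$, the pullback of Lebesgue measure is
\begin{equation*}
\mathrm{dV}_1=\det(T^*T)\,\mathrm{dVol}_{\C^k}=\Bigl(\sum_{|J|=k}|\det T_J|^2\Bigr)\,\mathrm{dVol}_{\C^k},
\end{equation*}
\emph{without} the square root you wrote: the real Gram determinant of the underlying $\R$-linear map equals $|\det(T^*T)|^2$, so its square root is $\det(T^*T)$ itself (a positive Hermitian determinant), and the complex Cauchy--Binet formula applies directly to $\det(T^*T)$. This matters for the argument to close: with the spurious square root, after changing variables $s=T_Jz$ (real Jacobian $|\det T_J|^2$) each term would retain an uncontrolled factor $|\det T_J|^{-1}$. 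With the correct formula the $|\det T_J|^{2}$ from Cauchy--Binet cancels the $|\det T_J|^{-2}$ from the change of variables exactly, the $n$ off-$J$ factors are bounded by $M_0$, the $k$ in-$J$ factors integrate to $1$, and each term is $\le M_0^n$, giving the claimed $M_0^n\binom{d_p}{n}$ after summing.
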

\begin{proof}
	If $n=d_{p}$ or $0$, the proposition trivially holds true. Hence, without loss of generality we can and do assume $n<d_{p}$ from now on for the rest of the proof. Since $p$ is fixed, we simply set $d=d_{p}, k=d-n>0,$ and we let $E_{1},\ldots, E_{k}$ be an orthonormal 
	basis of $V^{\perp}$. Writing $e_{1}, \ldots, e_{d}$ for the 
	standard orthonormal basis of $\C^{d}$, this corresponds exactly to
	the sections $S^{p}_{j}$ under the identification $H^{0}_{(2)}(\Sigma, 
	L^{p})\simeq \C^{d_{p}}$. Write for $i=1,\ldots,k$,
	\begin{equation}
		E_{i}=\sum_{j=1}^{d} a_{i}^{j}e_{j}, \; a_{i}^{j}\in\C.
		\label{eq:5.1.9}
	\end{equation}
	
	Let $W_{p}$ denote the matrix $(a_{i}^{j})$ of size $k\times d$, 
	and denote by $W_{p}^{*}$ its complex adjoint matrix.
	The orthonormality of the basis implies
	\begin{equation}
		W_{p}W_{p}^{*}=\mathrm{Id}_{k\times k}.
		\label{eq:5.1.10}
	\end{equation}
	
	Let $I(d,k)$ denote all subsets of $\{1,\ldots,d\}$ of cardinality 
	$k$, then $|I(d,k)|=\binom{d}{n}$. If $S\in I(d,k)$, let 
	$W_{p,S}$ denote the square matrix consisting of the $k$ columns of 
	$W_{p}$ indexed by $S$ (in the order induced by $S$), and let $W_{p}^{*,S}$ 
	denote the square matrix consisting of $k$ rows of $W^{*}_{p}$ 
	indexed by $S$ (in the order induced by $S$). It is clear that $W_{p}^{*,S}$ 
	is exactly the complex adjoint matrix of $W_{p,S}$. Then, due to the 
	Cauchy-Binet formula (i.e., a generalized 
	Pythagorean or Gougu theorem), we have
	\begin{equation}
		1=\det W_{p}W_{p}^{*}=\sum_{S\in I(d,k)}\det 
		W_{p,S}W_{p}^{*,S}.
		\label{eq:5.1.11}
	\end{equation}
	
	Now observe that $\det 
	W_{p,S}W_{p}^{*,S}\geq 0,$ and hence due to \eqref{eq:5.1.11} there exists 
	$S_{V}\in I(d,k)$ such that
	\begin{equation}
		\det 
		W_{p,S_{V}}W_{p}^{*,S_{V}}\geq \frac{1}{|I(d,k)|}\,\cdot
		\label{eq:5.1.12}
	\end{equation}
	In particular, $W_{p,S_{V}}$ is an invertible square matrix.
	
	We now prove \eqref{eq:5.1.8}. Let $t=(t_{1},\ldots, t_{k})$ denote 
	the complex coordinates of $V^{\perp}$ with respect to the basis 
	$E_{i}$, $i=1,\ldots,k$. Then we can write
	\begin{equation}
		v_{1}=\sum_{i=1}^{k}t_{i}E_{i}\in V^{\perp}, \; 
		\mathrm{dV}_{1}(t)=\prod_{i=1}^{k}\frac{\sqrt{-1}}{2}dt_{i}\wedge 
		d\overline{t}_{i}.
		\label{eq:5.1.13}
	\end{equation}
	Let $s=(s_{j})_{j\in S_{V}}\in \C^{k}$ be another complex coordinate system 
	of $V^{\perp}$ such that $s_{j}=\sum_{i=1}^{k} t_{i}a^{j}_{i}$. 
	Then $W_{p,S_{V}}$ represents exactly the Jacobian matrix for the 
	holomorphic coordinate change from $t$ to $s$, so that the Jacobian 
	determinant for the real coordinate change is given by $\det 
	W_{p,S_{V}}W_{p}^{*,S_{V}}$. Then for any integrable function $F$ on $\C^{k}$, we have 
	\begin{equation}
		\int_{t\in\C^{k}}F\Big(\sum_{i=1}^{k} t_{i}a^{j}_{i},j\in 
		S_{V}\Big)\, \mathrm{dV}_{1}(t)=\int_{s\in \C^{k}}F(s)\frac{1}{\det 
		W_{p,S_{V}}W_{p}^{*,S_{V}}} \, \mathrm{dV}_{1}(s).
		\label{eq:5.1.13bis}
	\end{equation}	
	Writing $v_{0}=(v_{0}^{1},\ldots, v_{0}^{d})\in V\subset 
	\C^{d}$ we infer that
	\begin{equation}
		\begin{split}
f^{p}(v_{0}+v_{1})&= \prod_{j\notin 
S_{V}}f^{p}_{j}\Big(v_{0}^{j}+\sum_{i=1}^{k}t_{i}a^{j}_{i}\Big)\cdot 
\prod_{j\in S_{V}}f^{p}_{j}\Big(v_{0}^{j}+\sum_{i=1}^{k}t_{i}a^{j}_{i}\Big)\\
&\leq M_{0}^{n}\cdot \prod_{j\in 
S_{V}}f^{p}_{j}\Big(v_{0}^{j}+\sum_{i=1}^{k}t_{i}a^{j}_{i}\Big),
		\end{split}
		\label{eq:5.1.14}
	\end{equation}
	where the last inequality follows from \eqref{eq:5.1.2}.
	
	Now, applying the formula \eqref{eq:5.1.13bis}, we deduce
	\begin{equation}
		g^{p}_{V}(v_{0})\leq \frac{M_{0}^{n}}{\det 
		W_{p,S_{V}}W_{p}^{*,S_{V}}} \int_{s\in \C^{k}} \prod_{j\in 
		S_{V}}f^{p}_{j}(v_{0}^{j}+s_{j})\, \mathrm{dV}_{1}(s)
		=\frac{M_{0}^{n}}{\det 
		W_{p,S_{V}}W_{p}^{*,S_{V}}}\,\cdot
		\label{eq:5.1.16}
	\end{equation}
	Combining \eqref{eq:5.1.12} with \eqref{eq:5.1.16}, we get 
	\eqref{eq:5.1.8}. This completes our proof.
\end{proof}

\subsection{Proof of Theorem \ref{thm:1.3.2}}\label{section3.3DLM}
Note that the inequality \eqref{eq:1.4.2paris} follows from 
Proposition \ref{prop:5.2.2}. By Remark \ref{rm:3.1.4paris}, the 
lower bound in \eqref{eq:1.4.2paris} can be improved to
$\frac{1}{C_{U}}p^{-7/4-\epsilon}$ for any given $\epsilon>0$.

Now we start to prove \eqref{eq:5.2.2}. Note 
that
\begin{equation}
	\Big\{s_{p}\;:\; \big|\log{\mathcal{M}^{U}_{p}(s_{p})}\big|\geq 
	\delta p\Big\}\subset \Big\{s_{p}\;:\; \mathcal{M}^{U}_{p}(s_{p})\geq 
	e^{\delta p}\Big\}\cup \Big\{s_{p}\;:\; \mathcal{M}^{U}_{p}(s_{p})\leq 
	e^{-\delta p}\Big\}.
\end{equation}
Upon choosing $\lambda_{p} = e^{\delta p}$ in \eqref{eq:5.2.21}
this entails by $d_{p}\sim p$,
\begin{equation}
	\Upsilon_{p}(\{s_{p}\;:\; \mathcal{M}^{U}_{p}(s_{p})\geq e^{ 
	\delta p}\})\leq e^{-C_{U,\delta}p^{2}}, \forall p\gg 0.
	\label{eq:53.2.2}
\end{equation}
Now we consider the probability of $\{\mathcal{M}^{U}_{p}(s_{p}) 
\le \lambda_{p}\}$ for arbitrary sequences 
$\{\lambda_{p}\}_{p\in \mathbb{N}}$  of positive 
numbers less than $1$. We claim that there exist constants $C>0,C'>0$ 
such that for $p\gg 0$,
\begin{equation}
	\Upsilon_{p}(\big\{s_{p}\;:\; \mathcal{M}^{U}_{p}(s_{p})\leq 
	\lambda_{p}\big\})\leq e^{Cd_{p}\log \lambda_{p} +C'd_{p}\log 
	p}.
	\label{eq:53.2.7}
\end{equation}
If we take $\lambda_{p}=e^{-\delta p}$ in \eqref{eq:53.2.7}, we get, 
with a constant $C_{U,\delta}>0$,
\begin{equation}
	\Upsilon_{p}(\{s_{p}\;:\; \mathcal{M}^{U}_{p}(s_{p})\leq e^{- 
	\delta p}\})\leq e^{-C_{U,\delta}p^{2}}, \forall p\gg 0;
	\label{eq:53.2.7bis}
\end{equation}
then inequality \eqref{eq:5.2.2} follows.

Therefore, in the sequel, we only focus on proving \eqref{eq:53.2.7}, 
which is clearly a more general statement than that we actually need.
For $U'\subset U$ be a smaller open subset which is relatively 
compact in $\Sigma$. Then we have
\begin{equation}
	\Upsilon_{p}(\{s_{p}\;:\; \mathcal{M}^{U}_{p}(s_{p})\leq 
	\lambda_{p}\})\leq \Upsilon_{p}(\{s_{p}\;:\; 
	\mathcal{M}^{U'}_{p}(s_{p})\leq \lambda_{p}\}).
	\label{eq:53.2.8}
\end{equation}
Fix a point $x_{0}\in U'$ and a $2$-cube $[-t,t]^{2}$ in 
$\mathbb{R}^{2}\simeq T_{x_{0}}\Sigma$. We choose $t>0$ sufficiently 
small so that $$F_{t}:=\exp_{x_{0}}([-t,t]^{2})\subset U',$$ 
and that
\begin{equation}
	\frac{1}{2}\Vert v-u\Vert \leq 
	\mathrm{dist}(\exp_{x_{0}}(v),\exp_{x_{0}}(u))\leq 
	2\Vert v-u\Vert ,\;\text{for all $v,u\in [-t,t]^{2}$}.
	\label{eq:53.2.10}
\end{equation}
Instead of proving directly \eqref{eq:53.2.7}, it is enough to prove 
the following estimate:
\begin{equation}
	\Upsilon_{p}(\{s_{p}\;:\; \mathcal{M}^{F_{t}}_{p}(s_{p})\leq 
	\lambda_{p}\})\leq e^{Cd_{p}\log \lambda_{p} +C'd_{p}\log 
	p},\: \text{for $p\gg 0$}.
	\label{eq:53.2.11}
\end{equation}
The uniform estimates in Theorem 
\ref{thm:1.3.1} hold for the open set $U'$. 
While our proof of \eqref{eq:53.2.11} is inspired by 
the arguments in \cite[Subsection 3.2]{SZZ}, some new 
computational techniques such as Proposition \ref{prop:boundmarginal} 
are needed since we are concerned with non-Gaussian ensembles of random variables.

For each $p>0$, we consider the lattice points
\begin{equation}
	\Gamma_{p}:= \Big \{(v_{1},v_{2})\in \mathbb{Z}^{2}\;:\; 
	|v_{j}|\leq \frac{t\sqrt{p}}{d} \Big \}.
	\label{eq:53.2.13}
\end{equation} 
and for $v\in\Gamma_{p}$ we define lattice points on the surface
by
\begin{equation}
	x^{p}_{v}=\exp_{x_{0}}\left(\frac{d}{\sqrt{p}}v\right)\in F_{t},
	\label{eq:53.2.12}
\end{equation}

The number of lattice points is given by
\begin{equation}
n_p:=\sharp\Gamma_{p}= \Big(2\Big[\frac{t\sqrt{p}}{d}\Big]+
1 \Big)^{2}=\frac{4t^{2}}{d^{2}}p+\mathcal{O}(\sqrt{p})\simeq d_{p}.
\label{eq:53.2.14}
\end{equation}
For $v\in\Gamma_{p}$ we fix some $\lambda_{v}\in L_{x^{p}_{v}}$ with 
$|\lambda_{v}|_{h}=1$ and set
\begin{equation}
\xi_{v}=\frac{\langle \lambda_{v}^{\otimes p}, 
s_{p}(x^{p}_{v})\rangle_{h^{p}}}{B_{p}(x^{p}_{v},x^{p}_{v})^{1/2}}\,\cdot
\label{eq:53.2.15}
\end{equation}
Then $\xi_{v}$ is a complex valued random variable.
By Theorem \ref{thm:2.1.1}, for $x^{p}_{v}\in F_{t}$, we have the following uniform 
estimate for $p\geq 1$ and $v\in\Gamma_{p}$,
\begin{equation}
	B_{p}(x^{p}_{v}, x^{p}_{v})= p \frac{a(x^{p}_{v})}{2\pi}+\mathcal{O}(1),
	\label{eq:53.2.15bis}
\end{equation}
Then \eqref{eq:53.2.11} follows from the claim below
\begin{equation}
	\Upsilon_{p}(\{\max_{v}|\xi_{v}|\leq \lambda_{p}\})\leq e^{Cd_{p}\log \lambda_{p} +C'd_{p}\log 
	p},\, \forall\, p\gg 0.
	\label{eq:53.2.16}
\end{equation}
Note that $s_{p}=\sum_{j}\eta^{p}_{j}S^{p}_{j}$, so we have
\begin{equation}
\xi_{v}=\sum_{j}\eta^{p}_{j}\frac{\langle \lambda_{v}^{\otimes p}, 
S^{p}_{j}(x^{p}_{v})\rangle_{h^{p}}}{B_{p}(x^{p}_{v},
x^{p}_{v})^{1/2}}\,\cdot	\label{eq:53.2.17}
\end{equation}
Recall that $\eta^{p}_{j}$, $j=1,\cdots,d_{p}$ are independently distributed random 
variables with expectation $\E[\eta^{p}_j]=0$ and uniformly bounded 
variance $\sigma_{p}^{2}$ as in \eqref{eq:5.1.1}, \eqref{eq:5.1.5new}.  Then 
\begin{equation}
	c_{0}\leq\mathbb{E}[|\xi_{v}|^{2}]=\sigma_{p}^{2}\leq K_{0} 
	d_{p}^{2}.
	\label{eq:53.2.18}
\end{equation}

Let $\Delta_{uv}:=\mathbb{E}[\xi_{u}\overline{\xi}_{v}]$ denote the covariance of 
$\xi_{u}$ and $\xi_{v}$, for $u,v\in\Gamma_{p}$, and let 
$\Delta=(\Delta_{uv})_{u,v\in\Gamma_{p}}$ denote the covariance 
matrix. Then by \eqref{eq:1.3.1}, \eqref{eq:53.2.17}, 
\begin{equation}
	\big|\mathbb{E}[\xi_{u}\overline{\xi}_{v}]\big|
	=\sigma^{2}_{p}P_{p}(x^{p}_{u},x^{p}_{v}).
	\label{eq:53.2.19}
\end{equation}
For $b=\sqrt{32/\varepsilon_{0}+1}$ 
we get by 
Theorem \ref{thm:1.3.1} that for $p\gg 0$,
\begin{equation}
\big|\Delta_{uv}\big|\leq\begin{cases} 
2\sigma_{p}^{2}\exp(-\frac{a(x^{p}_{u})p}{4}
\mathrm{dist}(x^{p}_{u},x^{p}_{v})^{2}) & \text{if } 
	\mathrm{dist}(x^{p}_{u},x^{p}_{v}) \leq b\sqrt{\frac{\log p}{p}}, \\
	\sigma_{p}^{2} \mathcal{O}(p^{-2})       & \text{if } 
	\mathrm{dist}(x^{p}_{u},x^{p}_{v}) \geq 
	b\sqrt{\frac{\log p}{p}},
\end{cases}
\label{eq:53.2.21}
\end{equation}
where the constant defining $\mathcal{O}(\cdot)$ is independent 
of $p$.

Fix $u\in \Gamma_{p}$, then by \eqref{eq:53.2.14} and the second 
estimate in \eqref{eq:53.2.21}, we have
\begin{equation}
	\frac{1}{\sigma^{2}_{p}}\sum_{v\in\Gamma_{p},v\neq u}\big|\Delta_{uv}\big|\leq 
	\sum_{\mathrm{near}}+\mathcal{O}(p^{-1}),
	\label{eq:53.2.22}
\end{equation}
where
\begin{equation}
	\sum_{\mathrm{near}}=\sum\Big\{\frac{1}{\sigma^{2}_{p}}\big|\Delta_{uv}\big|\;:\; 
	0<\mathrm{dist}(x^{p}_{u},x^{p}_{v})\leq b\sqrt{\tfrac{\log 
	p}{p}}\Big\}.
	\label{eq:53.2.23}
\end{equation}
Noting that
\begin{equation}
a(x^{p}_{u})\mathrm{dist}(x^{p}_{u},x^{p}_{v})^{2}>
\varepsilon_{0}\frac{d^{2}}{4p}\Vert u-v\Vert ^{2},
	\label{eq:53.2.24}
\end{equation}
 the first estimate in \eqref{eq:53.2.21} supplies us with
\begin{equation}
	\begin{split}
		\sum_{\mathrm{near}}&\leq 2\sum_{v\neq u} 
		e^{-\frac{\varepsilon_{0}d^{2}}{16}\Vert u-v\Vert ^{2}}
		\leq 2\sum_{v\in\mathbb{Z}^{2},v\neq 
		0}e^{-\frac{\varepsilon_{0}d^{2}}{16}\Vert v\Vert ^{2}}\\
		&\leq b'\int_{x\in\mathbb{R}^{2},\Vert x\Vert \geq 2/3} 
		e^{-\frac{\varepsilon_{0}d^{2}}{64}\Vert x\Vert ^{2}} \, {\rm d}x
		\leq b'' e^{-\frac{\varepsilon_{0}d^{2}}{144}}\leq 
		\frac{1}{3},
	\end{split}
	\label{eq:53.2.25}
\end{equation}
where the constants $b'$, $b''>0$ are independent of $p\gg 0$, and 
$d>0$ is chosen large enough so as to guarantee the last inequality.

We denote the $\ell^{\infty}$-norm of $\eta \in \C^{n}$
by $\Vert\eta\Vert_\infty=\max_{v}|\eta_{v}|$. Furthermore, we write
$I_n$ for
the $n\times n$ identity matrix, as well as $\Delta=\sigma^{2}_{p}I_n+A$, where $A$ has zero diagonal entries. 
By \eqref{eq:53.2.22} and \eqref{eq:53.2.25}, for $p\gg 0$, 
\begin{equation}
	\Vert A\eta\Vert_\infty\leq 
	\frac{\sigma^{2}_{p}}{2}\Vert\eta\Vert_\infty, \; \eta\in\C^{n}.
	\label{eq:53.2.26}
\end{equation}
Then
\begin{equation}
	\Vert\Delta\eta\Vert_\infty\geq 
	\Vert\sigma^{2}_{p}\eta\Vert_\infty - 
	\Vert A\eta\Vert_\infty\geq 
	\frac{\sigma^{2}_{p}}{2}\Vert\eta\Vert_\infty\geq
	\frac{c_{0}}{2}\Vert\eta\Vert_\infty.
	\label{eq:53.2.27}
\end{equation}
As a Hermitian square matrix, $\Delta$ is invertible and the eigenvalues of 
$\Delta^{-1}$ are bounded above by $2/\sigma^{2}_{p}$. Now set
\begin{equation}
	\zeta=(\zeta_{v}):=\Delta^{-1/2}\xi \in \C^{n},
	\label{eq:53.2.28}
\end{equation}
so that the coordinates $\zeta_{v}$ are random variables which are 
centered with finite variance, but generally they are not 
independently distributed. Moreover, we have
\begin{equation} \label{eq:5covVanish}
	\E[\zeta_u \overline{\zeta}_v] = \delta_{uv}, \quad \forall\, u,v \in \Gamma_p.
\end{equation}
Next, note that each $\zeta_v$ is a linear combination of the $(\eta^{p}_j),$ i.e.,
\begin{equation}
	\zeta_v = \sum_j \eta^{p}_j \beta_j(v),
	\label{eq:5.3.26}
\end{equation}
where $\beta_j(v) \in \C$ are constants. 
To apply directly Proposition \ref{prop:boundmarginal}, we normalize 
the random variables $\eta^{p}_{j},j=1,\cdots, d_{p}$ as follows,
\begin{equation}
	\tilde{\eta}^{p}_{j}=\frac{1}{\sigma_{p}}\eta^{p}_{j}.
	\label{eq:5.3.27}
\end{equation}
Then the PDF of $\tilde{\eta}^{p}_{j}$ on $\C$, with respect to the 
Lebesgue measure, is given by
\begin{equation}
	\tilde{f}^{p}_{j}(z)=f^{p}_{j}(\sigma_{p}z)\sigma_{p}^{2}.
	\label{eq:5.3.28}
\end{equation}
By \eqref{eq:5.1.2} and \eqref{eq:5.1.1}, we have that for all $p$, 
$j\in\{1,\cdots,d_{p}\}$,
\begin{equation}
	\sup_{z\in\C}|\tilde{f}^{p}_{j}(z)|\leq 
	M_{0}\sigma_{p}^{2}<\infty.
	\label{eq:5.3.29}
\end{equation}
As in \eqref{eq:5.1.5}, we denote by $\tilde{f}^{p}$ the joint probability density function 
of the random vector $(\tilde{\eta}^{p}_{j})_{j=1}^{d_{p}}\in 
\C^{d_{p}}$.
For $v\in \Gamma_{p}$, set 
\begin{equation}
	E_{v}=(\sigma_{p}\overline{\beta}_{1}(v),
	\ldots,\sigma_{p}\overline{\beta}_{d_{p}}(v))\in \C^{d_{p}}.
	\label{eq:5.3.30}
\end{equation}
By \eqref{eq:5covVanish}, $\{E_{v}\}_{v\in\Gamma_{p}}$ forms an 
orthonormal set in 
$\C^{d_{p}}$, let $V_{p}\subset \C^{d_{p}}$ denote the $\C$-subspace 
spanned by $\{E_{v}\}_{v\in\Gamma_{p}}$. Then
\begin{equation}
	n=\dim_{\C}V_{p}\leq d_{p}.
	\label{eq:5.3.31}
\end{equation}
Let $K_{p}$ denote the $d_{p}\times n$-matrix whose columns are just 
the column vectors $E_{v}$, $v\in\Gamma_{p}$. Then
\begin{equation}
	K_{p}^{*}K_{p}=I_n.
	\label{eq:5.3.32}
\end{equation}
Set $Q_{p}=K_{p}K_{p}^{*}$, then 
$Q_{p}$ is exactly the square matrix defining the orthogonal projection from $\C^{d_{p}}$ onto 
$V_{p}$ in $\C^{d_{p}}$. Let $V_{p}^{\perp}=\mathrm{Im}(1-Q_{p})$ be the orthogonal complement of 
$V_{p}$. We also identify the vector 
$(\zeta_{v})_{v\in\Gamma_{p}}\in \C^{n}$ with $\sum_{v\in\Gamma_{p}} 
\zeta_{v}E_{v}\in V_{p}$. 

Considering $\zeta=(\zeta_{v})_{v\in \Gamma_{p}}$ and 
$\tilde{\eta}^{p}=(\tilde{\eta}^{p}_{j})_{j\in O_{p}}$ as column vectors, then 
\eqref{eq:5.3.26} is equivalent to the relation
\begin{equation}
	Q_{p}\tilde{\eta}^{p}=K_{p}\zeta.
	\label{eq:5.3.33}
\end{equation}
As in \eqref{eq:5.1.7}, for $\zeta\in \C^{n}$, define
\begin{equation}
	g^{p}_{V_{p}}(\zeta)=\int_{\eta\in 
	V_{p}^{\perp}}\tilde{f}^{p}(K_{p}\zeta+\eta)\,\mathrm{dV}_{1}(\eta).
	\label{eq:5.3.34}
\end{equation}
Then $g^{p}_{V_{p}}$ is exactly the probability density function on 
$\C^{n}$ (with respect to the standard Lebesgue measure) for the 
random vector $(\zeta_{v})_{v\in\Gamma_{p}}$ defined in 
\eqref{eq:53.2.28}. By Proposition \ref{prop:boundmarginal} and 
\eqref{eq:5.3.29}, we get
\begin{equation}
	\sup_{\zeta\in \C^{n}}g^{p}_{V_{p}}(\zeta)\leq 
	(M_{0}\sigma_{p}^{2})^{n}\binom{d_{p}}{n},
	\label{eq:5.3.35}
\end{equation}
By \eqref{eq:53.2.28}, for $p\gg 0$,
\begin{equation}
	\max_{v}|\zeta_{v}|\leq \sqrt{\frac{2n}{\sigma^{2}_{p}}}\max_{v}|\xi_{v}|.
	\label{eq:53.2.30}
\end{equation}
As a consequence of the above, for $p\gg 0$,
\begin{equation}
	\begin{split}
\Upsilon_{p}\Big(\Big\{\max_{v}|\xi_{v}|\leq  \lambda_{p}\Big\}\Big)&\leq 
\Upsilon_{p}\Big(\Big\{\max_{v}|\zeta_{v}|\leq  
\sqrt{\frac{2n}{\sigma^{2}_{p}}}
\lambda_{p}\Big\}\Big)\\
		&\le \int_{(\zeta_{v})\in\C^{n}, |\zeta_{v}|\leq 
\sqrt{\frac{2n}{\sigma^{2}_{p}}}\lambda_{p}}
(M_{0}\sigma_{p}^{2})^{n}\binom{d_{p}}{n}\mathrm{dVol}(\zeta)\\
		&=(2\pi M_{0}n)^{n} \lambda^{2n}_{p}\binom{d_{p}}{n}.
	\end{split}
	\label{eq:3.3.37DLM}
\end{equation}
Note that in the above computations, $n\simeq d_{p}$, using 
$\binom{d_{p}}{n}\leq d_{p}!$, we get the desired inequality 
\eqref{eq:53.2.16}, thus \eqref{eq:53.2.7} holds. This completes our 
proof.
\begin{remark}
	By examining the proofs to \eqref{eq:5.2.21} and to 
	\eqref{eq:53.2.7}, Theorem \ref{thm:1.3.2} still holds if Condition 
	\eqref{eq:5.1.2} is replaced by a milder one: there exists 
	$k_{0}\in\bN$, $M_{0}>0$ such that for $p\gg 0$,
	\begin{equation}
		\sup_{z\in \C}|f^{p}_{j}(z)|\leq M_{0}\, p^{k_{0}}.
		\label{eq:3.3.39DLM}
	\end{equation}
\end{remark}


\subsection{Proof of Proposition \ref{prop:1.3.3}}\label{subs3.3}
We start by showing the integrability of the function $|\log|s_{p}|_{h^{p}}|$ 
for a nonzero $s_{p}\in 
H^{0}_{(2)}(\Sigma, L^{p})$ and for $p\geq2$ 
on each $V_{j}$ (as in Assumption \ref{item:alpha}) 
with respect to $\omega^{\Sigma}$. 
We just consider an open subset 
$\mathbb{D}^{*}_{r}\subset V_{j}$ for some $r\in\,]0,1[\,$. Let $1$ 
denote the canonical holomorphic frame of $L$ over 
$\mathbb{D}^{*}_{r}$ so that
\begin{equation}
	|1|^{2}_{h}(z)=|\log(|z|^{2})|.
	\label{eq:3.3.1bis}
\end{equation}
Then the section $s_{p}$, restricting on $\mathbb{D}^{*}_{r}$, can be 
written as
\begin{equation}
	s_{p}(z)=z^{k}f(z)1^{\otimes p}(z),
	\label{eq:3.3.2}
\end{equation}
where $f(z)$ is holomorphic function on $\overline{\mathbb{D}}$ with 
$f(0)\neq 0$, and $k\geq 1$ is the vanishing order of $s_{p}$ at 
$a_{j}$. Then for $z\in \mathbb{D}^{*}_{r}$, we have
\begin{equation}
	\log(|s_{p}|^{2}_{h^{p}})=2k\log|z|+\log(|f|^{2})+p\log|\log(|z|^{2})|.
	\label{eq:3.3.3bis}
\end{equation}
Note that
\begin{equation}
	\int_{0}^{r}\log{t}\frac{2tdt}{t^{2}\log^{2}{t}}=\infty.
	\label{eq:3.3.4bis}
\end{equation}
Comparing \eqref{eq:1.1.1}, the $\log|z|$-term in \eqref{eq:3.3.3bis} 
with \eqref{eq:3.3.4bis}, we get that $\log(|s_{p}|^{2}_{h^{p}})$ is not integrable with 
respect to the volume form $\omega_{\mathbb{D}^{*}}$ on 
$\mathbb{D}^{*}_{r}$.

As stated in Proposition \ref{prop:1.3.3}, we only consider a 
relatively compact open subset $U$.
Using instead Theorem \ref{thm:1.3.2}, the Proof of Proposition \ref{prop:1.3.3} follows exactly from the 
arguments in \cite[Subsection 4.1]{SZZ}. Here, we just summarize this 
proof briefly.

For $t>0$, we introduce the following notation
\begin{equation}
	\log^{+}{t}=\max\{\log{t},0\},\; 
	\log^{-}{t}:=\log^{+}(1/t)=\max\{-\log{t},0\}.
	\label{eq:3.3.4}
\end{equation}
Then
\begin{equation}
	|\log{t}|=\log^{+}{t}+\log^{-}{t}.
	\label{eq:3.3.5}
\end{equation}

Let $U$ be a relatively compact open subset in $\Sigma$, then for a 
nonzero $s_{p}$, $\big|\log|s_{p}|_{h^{p}}\big|$ in integrable on 
$\overline{U}$ with respect to $\omega_{\Sigma}$. We first to show the following claim:
\begin{equation}
	\Upsilon_{p}(\{s_{p}\;:\; 
	\int_{U}\log^{+}{|s_{p}|_{h^{p}}}\,\omega_{\Sigma}\geq 
	\frac{\delta}{2} p\})\leq e^{-C_{U,\delta}p^{2}}.
	\label{eq:3.3.6}
\end{equation}
Indeed, we have
\begin{equation}
	\log^{+}{|s_{p}|_{h^{p}}}\leq 
	|\log\mathcal{M}^{U}_{p}(s_{p})|.
	\label{eq:3.3.7}
\end{equation}
Then
\begin{equation}
	\begin{split}
		&\Upsilon_{p}\Big(\Big\{s_{p}\;:\; 
		\int_{U}\log^{+}{|s_{p}|_{h^{p}}}\,
		\omega_{\Sigma}\geq 
		\frac{\delta}{2} p\Big\}\Big)\\
		&\leq \Upsilon_{p}\Big(\Big\{s_{p}\;:\; 
		|\log\mathcal{M}^{U}_{p}(s_{p})|\geq 
		\frac{\delta}{2 \mathrm{Area}(U)} p\Big\}\Big),
	\end{split}
	\label{eq:3.3.9}
\end{equation}
where $\mathrm{Area}(U)$ denotes the area of $U$ with respect to 
$\omega_{\Sigma}$. Then \eqref{eq:3.3.6} follows exactly from Theorem \ref{thm:1.3.2}.

Next step is to prove that 
\begin{equation}
	\Upsilon_{p}\Big(\Big\{s_{p}\;:\; 
	\int_{U}\log^{-}{|s_{p}|_{h^{p}}}\,\omega_{\Sigma}\geq 
	\frac{\delta}{2} p\Big\}\Big)\leq e^{-C_{U,\delta}\,p^{2}},
	\label{eq:3.3.10}
\end{equation}
where we use \eqref{eq:3.3.6} and the property of sub-harmonic 
functions.
Suppose that $U$ contains an annulus $B(2,3):=\{z\in\C\;:\;2< 
|z|<3\}$ (after rescaling on the coordinate), and the line bundle $L$ 
on $B(1,4)$ (still contained in $U$) has a holomorphic local frame 
$e_{L}$. Set $\alpha(z)= 
\log|e_{L}(z)|^{2}_{h}$. For $s_{p}\in H^{0}_{(2)}(\Sigma, L^{p})$, 
we can write
\begin{equation}
	s_{p}=f_{p}e^{\otimes p}_{L},
	\label{eq:3.4.11DLM}
\end{equation}
where $f_{p}$ is a holomorphic function on $B(1,4)$. Then
\begin{equation}
	\log|s_{p}|_{h^{p}}=\log|f_{p}|+\frac{p}{2}\alpha.
	\label{eq:3.4.12DLM}
\end{equation}
In the following estimates, each $K_{\bullet}$ denotes a sufficiently 
large positive constant. Then by \eqref{eq:3.3.5} and \eqref{eq:3.3.9}, we have
\begin{equation}
	\Upsilon_{p}\Big(\Big\{s_{p}\;:\; 
	\int_{B(2,3)}\log^{+}{|f_{p}|}\,\omega_{\Sigma}\geq 
	K_{1} p \Big\} \Big)\leq e^{-C_{U,K_{1}}\, p^{2}},
	\label{eq:3.4.13DLM}
\end{equation}
Using the Poisson kernel and the sub-mean inequality for 
$\log(|f_{p}|)$, we improve \eqref{eq:3.4.13DLM} as follows,
\begin{equation}
	\Upsilon_{p}\Big(\Big\{s_{p}\;:\; 
	\int_{B(2,3)}\big|\log{|f_{p}|}\big|\,\omega_{\Sigma}\geq 
	K_{2} p\Big \} \Big)\leq e^{-C_{U,K_{2}}\, p^{2}},
	\label{eq:3.4.14DLM}
\end{equation}
From here, we proceed exactly as in \cite[Subsection 
4.1, pp. 1992]{SZZ}. As a consequence, for a $\delta\in\; 
]0,\frac{1}{2}]$, we get a finite set of points 
$\{z_{j}\}_{j=1}^{q}$ in $B(2,3)$ such that for all $s_{p}$
we have
\begin{equation}
	\begin{split}
		&-\int_{B(2,3)} \log|s_{p}|_{h_{p}}\omega_{\Sigma}\\
		&\quad\lesssim 
		-\sum_{j=1}^{q} \mu_{j} 
		\log|s_{p}(z_{j})|_{h^{p}}+K_{3}\delta\int_{B(2,3)} 
		\big|\log{|f_{p}|}\big|\,\omega_{\Sigma}+p\delta K_{3} \sup_{z\in 
		B(2,3)}|d\alpha(z)|_{\omega_{\Sigma}},
	\end{split}
\end{equation}
where the quantities $q$ and $\mu_{j}>0$ only depend on $\delta$, and 
$\sum_{j=1}^{q}\mu_{j}\simeq 1$.
Applying Theorem \ref{thm:1.3.2} to each term 
$\log|s_{p}(z_{j})|_{h^{p}}$ and taking advantage of \eqref{eq:3.4.13DLM}, we infer that
\begin{equation}
	\Upsilon_{p}\Big(\Big\{s_{p}\;:\; 
	-\int_{B(2,3)}\log{|s_{p}|_{h^{p}}}\,\omega_{\Sigma}\geq 
	K_{4}\delta p\Big\}\Big)\leq e^{-C_{U,\delta}p^{2}},\;\forall\;p\gg 0.
	\label{eq:3.4.16DLM}
\end{equation}

Note that $\log^{-}=-\log+\log^{+}$ and that a finite set of annulus 
$B(2,3)$ covers $U$, we get \eqref{eq:3.3.10} from 
\eqref{eq:3.3.9} and
\eqref{eq:3.4.16DLM}.

\begin{remark}
	Actually, one can see that, by the above sketched proof from 
	\cite[Subsection 4.1]{SZZ}, there is no absolute reason to take 
	the volume form $\omega_{\Sigma}$. If we use instead a smooth 
	volume form $\mathrm{d}\mu$ on $\overline{\Sigma}$, then a 
	probability estimate as in \eqref{eq:1.3.5} also holds even for 
	an open subset $U\subset \Sigma$ which is not relatively compact.
\end{remark}

\subsection{Proof of Theorem \ref{thm:1.3.5}}\label{subs3.4}

As explained in the last part of Subsection \ref{subs1.3}, 
we need to control the vanishing order of holomorphic sections 
in $H^{0}_{(2)}(\Sigma, L^{p})$ at the punctures.

For $a_{j}\in D$, set
\begin{equation}
	H^{0}(\overline{\Sigma}, L^{p})_{(a_{j},k)}:=\{s\in H^{0}(\overline{\Sigma}, L^{p})\;:\; 
	\mathrm{ord}_{a_{j}}(s)\geq k\}.
	\label{eq:3.4.1}
\end{equation}
It is clear that $H^{0}(\overline{\Sigma}, L^{p})_{(a_{j},k)}$ is a vector 
subspace of $H^{0}(\overline{\Sigma}, L^{p})$.
We always view $H^{0}_{(2)}(\Sigma, L^{p})$ as a subspace of 
$H^{0}(\Sigma, L^{p})$, set
\begin{equation}
	H^{0}_{(2)}(\Sigma, 
	L^{p})_{(a_{j},k)}=H^{0}_{(2)}(\Sigma,L^{p})\cap 
	H^{0}(\overline{\Sigma}, L^{p})_{(a_{j},k)}.
	\label{eq:3.4.2}
\end{equation}

\begin{lemma}\label{lm:3.4.1}
	There exist $p_{0}>0$, $k_{0}>0$ such that for any $a_{j}\in 
	D$, $\forall p\geq p_{0}$, $k\geq k_{0}$,
	\begin{equation}
		\dim H^{0}_{(2)}(\Sigma, 
		L^{p})_{(a_{j},k)} \leq d_{p}-1,
		\label{eq:3.4.3}
	\end{equation}
	so that
	\begin{equation}
		\Upsilon_{p}(H^{0}_{(2)}(\Sigma, 
		L^{p})_{(a_{j},k)})=0.
		\label{eq:3.4.4}
	\end{equation}
\end{lemma}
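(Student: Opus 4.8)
The plan is to transfer the question to the compact Riemann surface $\overline{\Sigma}$. Recall from the discussion following \eqref{eq:1.1.2} that for $p\geq 2$ one has $H^{0}_{(2)}(\Sigma,L^{p})=\{s\in H^{0}(\overline{\Sigma},L^{p}):s|_{D}=0\}=H^{0}(\overline{\Sigma},L^{p}\otimes\mathcal{O}(-D))$, where $D=a_{1}+\dots+a_{N}$ is viewed as a reduced divisor on $\overline{\Sigma}$. Under this identification, for a nonzero $s\in H^{0}_{(2)}(\Sigma,L^{p})$ the integer $\ord_{a_{j}}(s)$ is exactly the vanishing order of $s$ at $a_{j}$ on $\overline{\Sigma}$, so imposing $\ord_{a_{j}}(s)\geq k$ on top of $s|_{D}=0$ is the same as requiring $\Div_{\overline{\Sigma}}(s)\geq D+(k-1)a_{j}$. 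Hence, for all $p\geq 2$, $k\geq 1$ and $a_{j}\in D$,
\[
H^{0}_{(2)}(\Sigma,L^{p})_{(a_{j},k)}=H^{0}\big(\overline{\Sigma},\,L^{p}\otimes\mathcal{O}(-D-(k-1)a_{j})\big).
\]

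I would then show that $k_{0}=2$ works. Since $H^{0}_{(2)}(\Sigma,L^{p})_{(a_{j},k)}$ is nonincreasing in $k$, it suffices to bound the dimension for $k=2$. By Assumption \ref{item:beta} we have $\deg(L)=\int_{\Sigma}c_{1}(L,h)>0$, so by \eqref{eq:dpL} the dimension $d_{p}=\dim H^{0}_{(2)}(\Sigma,L^{p})=\dim H^{0}(\overline{\Sigma},L^{p}\otimes\mathcal{O}(-D))$ tends to $+\infty$; since on a compact Riemann surface $h^{0}(\mathcal{L})\leq\deg(\mathcal{L})+1$ whenever the right-hand side is nonnegative, this forces $\deg_{\overline{\Sigma}}(L^{p}\otimes\mathcal{O}(-D))\to+\infty$. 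Consequently there is $p_{0}\geq 2$ such that for every $p\geq p_{0}$ and every $j$ one has $\deg_{\overline{\Sigma}}(L^{p}\otimes\mathcal{O}(-D-a_{j}))>2g-2$, with $g$ the genus of $\overline{\Sigma}$, whence $H^{1}(\overline{\Sigma},L^{p}\otimes\mathcal{O}(-D-a_{j}))=0$ by Serre duality. Feeding this into the long exact cohomology sequence of $0\to L^{p}\otimes\mathcal{O}(-D-a_{j})\to L^{p}\otimes\mathcal{O}(-D)\to\big(L^{p}\otimes\mathcal{O}(-D)\big)|_{a_{j}}\to 0$ yields the exact sequence $0\to H^{0}(L^{p}\otimes\mathcal{O}(-D-a_{j}))\to H^{0}(L^{p}\otimes\mathcal{O}(-D))\to\C\to 0$, so that
\[
\dim H^{0}_{(2)}(\Sigma,L^{p})_{(a_{j},2)}=\dim H^{0}_{(2)}(\Sigma,L^{p})-1=d_{p}-1 .
\]
Iterating this (or a direct Riemann-Roch count) shows more precisely that the codimension of $H^{0}_{(2)}(\Sigma,L^{p})_{(a_{j},k)}$ equals $k-1$ for $p\geq p_{0}$, which in particular gives \eqref{eq:3.4.3} for all $k\geq k_{0}=2$ and all $a_{j}\in D$.

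For the probabilistic conclusion \eqref{eq:3.4.4} I would simply observe that, by the construction in \eqref{eq:5.1.4}, $\Upsilon_{p}$ has density $f^{p}$ with respect to the Lebesgue measure $\mathrm{dVol}_{p}$ on $H^{0}_{(2)}(\Sigma,L^{p})\simeq\C^{d_{p}}$, hence $\Upsilon_{p}\ll\mathrm{dVol}_{p}$; and $H^{0}_{(2)}(\Sigma,L^{p})_{(a_{j},k)}$, being a $\C$-linear subspace of dimension at most $d_{p}-1$, is $\mathrm{dVol}_{p}$-null, therefore $\Upsilon_{p}$-null.

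I do not expect a genuine obstacle here: the argument is essentially immediate once the $\mathcal{L}^{2}$-holomorphic sections on $\Sigma$ are identified with honest holomorphic sections over $\overline{\Sigma}$. The only point that requires a little care is the choice of $p_{0}$, which rests on the standard fact that positivity of $(L,h)$ on $\Sigma$ (Assumption \ref{item:beta}) forces $\deg_{\overline{\Sigma}}(L)>0$, and hence the vanishing of the relevant $H^{1}$ on $\overline{\Sigma}$ for $p$ large.
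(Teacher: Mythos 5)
Your proof is correct, and it takes a genuinely different route from the paper. The paper argues more directly: it takes a nonzero $f_{p_0}\in H^0_{(2)}(\Sigma,L^{p_0})$ and, using that $L$ is ample on $\overline{\Sigma}$, a section $s_{j,p-p_0}\in H^0(\overline{\Sigma},L^{p-p_0})$ with $s_{j,p-p_0}(a_j)\neq 0$; their tensor product $S_p=f_{p_0}\otimes s_{j,p-p_0}$ lies in $H^0_{(2)}(\Sigma,L^p)$ and has $\ord_{a_j}(S_p)=\ord_{a_j}(f_{p_0})$, a constant independent of $p$, so $S_p\notin H^0_{(2)}(\Sigma,L^p)_{(a_j,k)}$ once $k>\ord_{a_j}(f_{p_0})$, which gives the codimension $\geq 1$ directly without any cohomology. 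Your approach instead identifies $H^0_{(2)}(\Sigma,L^p)_{(a_j,k)}$ with $H^0(\overline{\Sigma},L^p\otimes\mathcal{O}(-D-(k-1)a_j))$ and uses the skyscraper short exact sequence together with Serre-duality vanishing of $H^1$ for $p\gg 0$; this buys you the sharper statement that the codimension is exactly $k-1$ (for fixed $k$ and $p$ large), and in particular the explicit value $k_0=2$, whereas the paper obtains some $k_0$ depending on the auxiliary section $f_{p_0}$. The trade-off is that you invoke Serre duality and a degree estimate, while the paper stays entirely at the level of constructing one witnessing section. One tiny caveat: the remark that the codimension ``equals $k-1$ for $p\geq p_0$'' for all $k\geq 2$ is not quite right as stated, since your $p_0$ was fixed from the $k=2$ degree inequality; but the lemma only needs $k_0=2$ together with monotonicity in $k$, which you correctly point out, so this does not affect the argument. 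The deduction of \eqref{eq:3.4.4} from \eqref{eq:3.4.3} via absolute continuity of $\Upsilon_p$ with respect to Lebesgue measure matches the paper.
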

\begin{proof}
	It is clear that \eqref{eq:3.4.4} is a direct consequence of 
	\eqref{eq:3.4.3} since $\Upsilon_{p}$ has an integrable PDF on 
	$H^{0}_{(2)}(\Sigma,L^{p})$ with respect to the Lebesgue measure. We only need to prove \eqref{eq:3.4.3} for a fixed $a_{j}\in D$. Note that $L$ is a positive holomorphic line bundle on 
	$\overline{\Sigma}$ (since its degree is positive), so that for any 
	sufficiently large $p$, there exists a nonzero 
	section $s_{j,p}\in H^{0}(\overline{\Sigma},L^{p})$ such that 
	\begin{equation}
		s_{j,p}(a_{j})\neq 0.
		\label{eq:3.4.5}
	\end{equation}
We fix a sufficiently large $p_{0}\in\bN$ such that 
$H^{0}_{(2)}(\Sigma,L^{p_{0}})$ has 
a nonzero section $f_{p_{0}}$, and that if $p\geq 2p_{0}$, then
	\begin{equation}
		S_{p}:=f_{p_{0}}\otimes s_{j,p-p_{0}}\in H^{0}_{(2)}(\Sigma, 
		L^{p})
		\label{eq:3.4.6}
	\end{equation}
	has vanishing order
	\begin{equation}
		\mathrm{ord}_{a_{j}}(S_{p})=\mathrm{ord}_{a_{j}}(f_{p_{0}}).
		\label{eq:3.4.7}
	\end{equation}
	As a consequence, for $k\geq 
	k_{0}:=\mathrm{ord}_{a_{j}}(f_{p_{0}})+1$,
	\begin{equation}
		0\neq S_{p}\notin H^{0}_{(2)}(\Sigma, 
		L^{p})_{(a_{j},k)},
		\label{eq:3.4.8}
	\end{equation}
	so that \eqref{eq:3.4.3} holds.
\end{proof}
Fix a $k_{0}$ in Lemma \ref{lm:3.4.1}. For $p\geq p_{0}$, $k\geq k_{0}$, set
\begin{equation}
	A^{p}_{k}=\bigcup_{j} H^{0}_{(2)}(\Sigma, 
	L^{p})_{(a_{j},k)} \subset H^{0}_{(2)}(\Sigma, 
	L^{p}).
	\label{eq:3.4.9}
\end{equation}
Then $\Upsilon_{p}(A^{p}_{k})=0$. This way, we get Lemma 
\ref{lm:1.3.6} as mentioned in Subsection \ref{subs1.3}.

Let $\widetilde{U}$ be an open subset of $\overline{\Sigma}$, set 
$U=\widetilde{U}\backslash D\subset \Sigma$. Then for $s_{p}\in H^{0}_{(2)}(\Sigma, 
L^{p})\backslash A^{p}_{k_{0}}$, $p\geq p_{0}$, we have
\begin{equation}
	|\mathcal{N}^{U}_{p}(s_{p})-\mathcal{N}^{\widetilde{U}}_{p}(s_{p})|\leq k_{0}N,
	\label{eq:3.4.10}
\end{equation}
the difference comes from the zeros of $s_{p}$ at the 
punctures $a_{j}$, $j=1,\ldots, N$, that might be included in 
$\widetilde{U}$.

\begin{proof}[Proof of Theorem \ref{thm:1.3.5}]	
We may assume the function $\varphi$ does not vanish identically on 
$\overline{\Sigma}$. Set $M_{\varphi}=\max_{x\in\overline{\Sigma}}|\varphi(x)|>0$.

Let $V_{\varphi}\subset \overline{\Sigma}$ be an open neighborhood of 
	$D$ (with smooth boundary), on which $\varphi$ is locally constant. In particular, 
	$\partial\overline{\partial}\varphi|_{V_{\varphi}}\equiv 0$.
	Let $p'_{0}>0$ be an integer such that
	\begin{equation}
		\frac{k_{0}NM_{\varphi}}{p'_{0}}\leq\frac{\delta}{3}\,\cdot
		\label{eq:3.4.12bis}
	\end{equation}
The Poincar\'{e}-Lelong formula \cite[Theorem 2.3.3]{MM07}
asserts that we have in the sense of measures on $\overline{\Sigma}$, 
	\begin{equation}
		\frac{\sqrt{-1}}{\pi}\partial\overline{\partial}\log{|s_{p}|_{h^{p}}}=[\mathrm{Div}_{\overline{\Sigma}}(s_{p})]-pc_{1}(L,h).
		\label{eq:3.4.13}
	\end{equation}

Then
\begin{equation}
\begin{split}
&\left(\frac{1}{p}[\mathrm{Div}(s_{p})],\varphi\right)
-\int_{\Sigma}\varphi c_{1}(L,h)\\
&=\left( 
\frac{1}{p}[\mathrm{Div}_{\overline{\Sigma}}(s_{p})]-
c_{1}(L,h),\varphi\right) +\bigg( \frac{1}{p}\big([\mathrm{Div}_{\Sigma}(s_{p})]-
	[\mathrm{Div}_{\overline{\Sigma}}(s_{p})]\big),\varphi\bigg)\\
&=\frac{\sqrt{-1}}{p\pi}\int_{\Sigma}
\log{|s_{p}|_{h^{p}}}\,\partial\overline{\partial}\varphi+
\bigg( \frac{1}{p}\big([\mathrm{Div}_{\Sigma}(s_{p})]-
	[\mathrm{Div}_{\overline{\Sigma}}(s_{p})]\big),\varphi\bigg).
		\end{split}
		\label{eq:3.4.14}
	\end{equation}
By \eqref{eq:3.4.10} - \eqref{eq:3.4.12bis}, if $s_{p}\in H^{0}_{(2)}(\Sigma, 
	L^{p})\backslash A^{p}_{k_{0}}$, then
	\begin{equation}
		\bigg|\bigg( 
	\frac{1}{p}\big([\mathrm{Div}(s_{p})]-
	[\mathrm{Div}_{\overline{\Sigma}}(s_{p})]\big),
	\varphi\bigg)\bigg|\leq\frac{\delta}{3},\quad
	\text{for all $p\geq \max\{p_{0},p'_{0}\}$}.
		\label{eq:3.4.15}
	\end{equation}
Since $\omega_{\Sigma}$ is smooth on $\Sigma\backslash V_{\varphi}$ 
we can set
	\begin{equation}
S_{\varphi}=\max_{x\in \Sigma\backslash 
V_{\varphi}}\left|\frac{\sqrt{-1}\partial
\overline{\partial}\varphi(x)}{\omega_{\Sigma,x}}\right|.
		\label{eq:3.4.16}
	\end{equation}
	For the general case, we can and we may assume that $S_{\varphi}>0$. Then
	\begin{equation}
		\begin{split}
			\left|\frac{\sqrt{-1}}{p\pi}\int_{\Sigma}\log{|s_{p}|_{h^{p}}}\,
			\partial\overline{\partial}\varphi\right|\leq 
			\frac{S_{\varphi}}{p\pi}\int_{\Sigma\backslash V_{\varphi}}
			\big|\log{|s_{p}|_{h^{p}}}\big|\,\omega_{\Sigma}.
		\end{split}
		\label{eq:3.4.17}
	\end{equation}
Therefore, we get that for $p\gg 0$ the following holds:
	\begin{equation}
		\begin{split}
			&\Big\{s_{p}\;:\; 
			\Big|(\frac{1}{p}[\mathrm{Div}(s_{p})],\varphi)
			-\int_{\Sigma}\varphi c_{1}(L,h)\Big|>\delta \Big\}\\
			&\; \subset \Big\{s_{p}\;:\; 
			\frac{S_{\varphi}}{p\pi}\int_{\Sigma\backslash 
			V_{\varphi}}\big|\log{|s_{p}|_{h^{p}}}\big|\,\omega_{\Sigma} 
			>\frac{2}{3}\delta \Big\} \cup A^{p}_{k_{0}}.
		\end{split}
		\label{eq:3.4.18}
	\end{equation}
	Upon recalling that $\Upsilon_{p}(A^{p}_{k_{0}})=0$ and by 
	applying Proposition \ref{prop:1.3.3} to \eqref{eq:3.4.18} we get 
	\eqref{eq:3.4.11}. This completes the Proof of our theorem.
\end{proof}

\subsection{Proof of Theorem \ref{thm:1.2.1}}\label{subs3.5}
(a) In order to prove \eqref{eq:1.4.4DLM}
we have to show that for all $\varphi\in 
\mathcal{C}^{\infty}_{0}(\Sigma)$, we have that $\Upsilon$-a.s.,
\begin{equation} \label{eq:conv}
\lim_{p \to \infty} \Big(\frac{1}{p}[\mathrm{Div}(s_{p})],\varphi\Big) 
=\int_{\Sigma}\varphi c_{1}(L,h).
\end{equation}
While this is a folklore consequence of Theorem 
\ref{thm:1.3.5} in probability theory, we provide the short deduction 
here for the sake of completeness. Write 
$Y_{p}=(\frac{1}{p}[\mathrm{Div}(s_{p})],\varphi)$ as well as 
$Y=\int_{\Sigma}\varphi c_{1}(L,h)$. If there was no $\Upsilon$-a.s.\ 
convergence, then by dominated convergence for $Z:= 
\limsup_{p\to\infty} |Y_p-Y|$ there would exist $\delta > 0$ such 
that $\Upsilon(Z>\delta)>\delta.$ Choosing $N_\delta \in \mathbb N$ 
such that $\sum_{p \ge N_\delta} \Upsilon(|Y_p-Y| > \delta) < 
\delta/2$ (which is possible due to Theorem \ref{thm:1.3.5}) leads to a contradiction via $\Upsilon(Z>\delta) \le 
\sum_{p \ge N_\delta} \Upsilon(|Y_p - Y| > \delta) < \delta/2.$

(b) Note that for the open subset $U\subset \Sigma$, we have
\begin{equation}
	\frac{\mathrm{Area}^{L}(U)}{2\pi}=\int_{U}c_{1}(L,h)<+\infty.
	\label{eq:3.5.1}
\end{equation}
Here, we require no relative compactness for $U$.

Fix an arbitrary $\delta>0$, we choose $\psi_{1}$, $\psi_{2}\in 
\mathcal{C}^{\infty}(\overline{\Sigma})$ to be real-valued functions which takes 
constant values near $a_{j}\in D$ such that
\begin{equation}
	\begin{split}
		&0\leq \psi_{1}\leq\chi_{U}\leq \psi_{2}\leq 1,\\
		&\int_{\Sigma} \psi_{1}c_{1}(L,h)\geq 
		\frac{\mathrm{Area}^{L}(U)}{2\pi}-\delta,\\
		&\int_{\Sigma} \psi_{2}c_{1}(L,h)\leq 
		\frac{\mathrm{Area}^{L}(U)}{2\pi}+\delta,
	\end{split}
	\label{eq:3.5.2}
\end{equation}
where $\chi_{U}$ is the characteristic function of $U$ on 
$\overline{\Sigma}$.

We apply Theorem \ref{thm:1.3.5} for $\psi_{1}$, then for 
$s_{p}$ not in an exceptional set of probability less than 
$e^{-C_{\psi_{1},\delta}p^{2}}$, we get
\begin{equation}
\mathcal{N}^{U}_{p}(s_{p})\geq 
		(\mathrm{Div}_{\Sigma}(s_{p}),\psi_{1})
		\geq p\int_{\Sigma}\psi_{1}c_{1}(L,h)-p\delta
		\geq p\,\frac{\mathrm{Area}^{L}(U)}{2\pi}-(p+1)\delta.
	\label{eq:3.5.3}
\end{equation}

Similarly, if we proceed with $\psi_{2}$, we get that for 
$s_{p}$ not in an exceptional set of probability less than $e^{-C_{\psi_{2},\delta}p^{2}}$,
\begin{equation}
	\begin{split}
		\mathcal{N}^{U}_{p}(s_{p})\leq 
		p\,\frac{\mathrm{Area}^{L}(U)}{2\pi}+(p+1)\delta.
	\end{split}
	\label{eq:3.5.4}
\end{equation}
Part (b) follows by combining \eqref{eq:3.5.3} and \eqref{eq:3.5.4}.

\subsection{Proof of Proposition \ref{prop:1.2.3}}\label{subs3.6}

Since we are concerned with the Gaussian ensembles, using the fact that 
$\mathrm{Area}(\Sigma):=\int_{\Sigma}\omega_{\Sigma}<\infty$, 
the first part of Proposition \ref{prop:1.2.3} follows from the same 
arguments in \cite[Subsection 4.2.4]{SZZ}. As for \eqref{eq:1.2.5}, 
we need a refined estimate for the norm of a holomorphic section 
near the punctures, which is explained as follows.

For $k_{0}\geq 2$, the sections in $H^{0}_{(2)}(\Sigma, L^{k_{0}})$ 
are exactly the ones in $H^{0}(\overline{\Sigma}, L^{k_{0}})$ which 
vanish at every $a_{j}\in D$. Since the zeros of a nontrivial 
holomorphic section of $L^{k_{0}}$ are isolated points in 
$\overline{\Sigma}$, we get the existence of $r_{j}\in \;]0,\frac{1}{2}[$ 
and $\tau_{j}$ as wanted.

We may and we always rescale $\tau_{j}$ by a nonzero constant so that 
$\sup_{x\in \overline{\Sigma}} |\tau_{j}(x)|_{h^{k_{0}}}=1$. The 
following lemma is elementary.

\begin{lemma}\label{lm:3.6.1}
	For $r\in \,]0,r_{j}[\,$, set
	\begin{equation}
		b(r)=-\log(\inf_{z\in 
		\mathbb{D}(r,r_{j})}|\tau_{j}(z)|_{h^{k_{0}}})>0.
		\label{eq:3.6.1}
	\end{equation}
	Then there exists $C_{j}>0$ such that
	\begin{equation}
		b(r)\leq C_{j} |\log{r}|, \;r\in \;]0,r_{j}[\,.
		\label{eq:3.6.2}
	\end{equation}
\end{lemma}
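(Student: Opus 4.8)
The plan is to reduce everything to the explicit local model near the cusp $a_j$ furnished by Assumption \ref{item:alpha}. Since $k_0\ge 2$, the section $\tau_j\in H^0_{(2)}(\Sigma,L^{k_0})$ extends holomorphically across $a_j$ and vanishes there, so in the canonical frame $1$ of $L$ over $\overline{V}_j$ we may write $\tau_j=f_j(z)\,1^{\otimes k_0}$ on $\mathbb{D}^{*}_{r_j}$, exactly as in \eqref{eq:3.3.2}, with $f_j$ holomorphic on a neighbourhood of $\overline{\mathbb{D}}$ and $f_j(0)=0$. Because $\tau_j$ has no zeros on $\mathbb{D}^{*}_{2r_j}$, I can factor $f_j(z)=z^{m_j}g_j(z)$ where $m_j=\mathrm{ord}_{a_j}(\tau_j)\ge 1$ and $g_j$ is holomorphic and nowhere vanishing on $\{|z|<2r_j\}$; in particular $c_j:=\min_{|z|\le r_j}|g_j(z)|>0$ by compactness of $\overline{\{|z|\le r_j\}}$.

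Next I use Assumption \ref{item:alpha} in the form $|1^{\otimes k_0}|^2_{h^{k_0}}(z)=|\log(|z|^2)|^{k_0}$, which gives, for $z\in\mathbb{D}^{*}_{r_j}$, the identity $|\tau_j(z)|_{h^{k_0}}=|z|^{m_j}\,|g_j(z)|\,|\log(|z|^2)|^{k_0/2}$ (this is just \eqref{eq:3.3.3bis} with $s_p$ replaced by $\tau_j$, exponentiated). On the annulus $\mathbb{D}(r,r_j)$ each factor is bounded below: $|z|^{m_j}\ge r^{m_j}$; $|g_j(z)|\ge c_j$; and since $|z|<r_j<\tfrac12$ one has $|\log(|z|^2)|=2\log(1/|z|)\ge 2\log(1/r_j)>0$, whence $|\log(|z|^2)|^{k_0/2}\ge(2\log(1/r_j))^{k_0/2}$. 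Multiplying these bounds, $\inf_{z\in\mathbb{D}(r,r_j)}|\tau_j(z)|_{h^{k_0}}\ge c_j\,(2\log(1/r_j))^{k_0/2}\,r^{m_j}$, and taking $-\log$ yields $b(r)\le m_j|\log r|+C'_j$ with $C'_j:=-\log\!\big(c_j(2\log(1/r_j))^{k_0/2}\big)$, a constant depending only on $j$ and on the already-fixed data $k_0,r_j$. Finally, since $0<r<r_j<\tfrac12$ we have $|\log r|\ge\log(1/r_j)>0$, so $C'_j\le\frac{\max\{C'_j,0\}}{\log(1/r_j)}\,|\log r|$ and \eqref{eq:3.6.2} follows with $C_j:=m_j+\max\{C'_j,0\}/\log(1/r_j)$.

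For the strict positivity $b(r)>0$ asserted in \eqref{eq:3.6.1}, I note that if the infimum were $\ge 1$, then, together with the normalisation $\sup_{\overline{\Sigma}}|\tau_j|_{h^{k_0}}=1$, the real-analytic function $|z|^{2m_j}|g_j(z)|^2|\log(|z|^2)|^{k_0}$ would be identically $1$ on $\mathbb{D}(r,r_j)$; restricting to circles $|z|=\rho$ forces $|g_j|$ constant on each such circle, hence $g_j$ constant by the maximum modulus principle, which is incompatible with $\rho\mapsto\rho^{2m_j}|\log(\rho^2)|^{k_0}$ being non-constant in $\rho$. So $b(r)>0$, completing the argument.

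I do not expect a genuine obstacle here — as the text says, the statement is elementary once Assumption \ref{item:alpha} is invoked. The only mildly delicate points are the uniform lower bound $c_j>0$ for $|g_j|$ on the closed disc (pure compactness, using that $g_j$ does not vanish on $\{|z|\le r_j\}$) and the bookkeeping needed to absorb the additive constant $C'_j$ into a multiple of $|\log r|$, both of which are routine.
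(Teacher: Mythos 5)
Your proof is correct and follows essentially the same route as the paper's: factor $\tau_j=z^{m_j}g_j(z)\,1^{\otimes k_0}$ in the local frame near $a_j$, bound each factor on the annulus $\mathbb{D}(r,r_j)$ (compactness giving $|g_j|\ge c_j>0$ and Assumption \ref{item:alpha} giving $|1^{\otimes k_0}|_{h^{k_0}}=|\log(|z|^2)|^{k_0/2}\ge(2\log(1/r_j))^{k_0/2}$), take $-\log$, and absorb the additive constant into $|\log r|$ using $|\log r|\ge\log(1/r_j)$. The only addition relative to the paper is your explicit justification that $b(r)>0$, which is a harmless extra check.
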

\begin{proof}
	Locally, we can write for $z\in \mathbb{D}^{*}_{2r_{j}}$,
	\begin{equation}
		\tau_{j}(z)= z^{m_{j}}g(z)1^{\otimes k_{0}}(z),
		\label{eq:3.6.3}
	\end{equation}
	where $m_{j}\in\mathbb{N}_{\geq 1}$ is the vanishing order of 
	$\tau_{j}$ at $a_{j}$, and $g$ is a holomorphic function such 
	that $g(0)\neq 0$. 
Set $v_{j}=\inf_{z\in \overline{\mathbb{D}}_{r_{j}}}|g(z)|>0$, then
	\begin{equation}
		1\geq\inf_{z\in 
		\mathbb{D}(r,r_{j})}|\tau_{j}(z)|_{h^{k_{0}}}\geq 
		r^{m_{j}}v_{j}|\log(r^{2}_{j})|^{k_{0}/2}.
		\label{eq:3.6.4}
	\end{equation}
	Then \eqref{eq:3.6.2} follows easily.
\end{proof}

\begin{proof}[Proof of \eqref{eq:1.2.5}]
	Set
	\begin{equation}
		E^{pk_{0}}_{1}=\Vert \tau_{j}^{\otimes 
		p}\Vert ^{-1}_{\mathcal{L}^{2}}\tau_{j}^{\otimes p}\in 
		H^{0}_{(2)}(\Sigma, L^{pk_{0}}).
		\label{eq:3.6.5}
	\end{equation}
	Note that 
	\begin{equation}
		\Vert \tau_{j}^{\otimes 
		p}\Vert _{\mathcal{L}^{2}}\leq \mathrm{Area}(\Sigma)^{1/2}.
		\label{eq:3.6.6}
	\end{equation}
	We then complete $\{E^{pk_{0}}_{1}\}$ to an orthonormal basis $
	\{E^{pk_{0}}_{1}$, $E^{pk_{0}}_{2}$, $\ldots$, 
	$E^{pk_{0}}_{d_{pk_{0}}}\}$ of $H^{0}_{(2)}(\Sigma, L^{pk_{0}})$.
	
	Since $\Upsilon_{p}$ is defined by i.i.d. standard random complex 
	Gaussian variables, its dependence on the basis $O_{p}$ is 
	eliminated. The random section $s_{pk_{0}}$ is given by
	\begin{equation}
		s_{pk_{0}}=\sum_{j}\xi_{j} 
		E^{pk_{0}}_{j}=\xi_{1}E^{pk_{0}}_{1} + s'_{pk_{0}},
		\label{eq:3.6.7}
	\end{equation}
	where 
	$\xi_{j}$, $j=1,\ldots,d_{pk_{0}}$ are i.i.d. standard random complex 
	Gaussian.
	
	Set $\xi'=(\xi_{2},\ldots,\xi_{d_{pk_{0}}})\in 
	\C^{d_{pk_{0}}-1}$. Similar to \eqref{eq:5.2.1k} and 
	\eqref{eq:5.2.7}, we have for 
	$p\gg 0$ and $x\in \Sigma$,
	\begin{equation}
		|s'_{pk_{0}}(x)|_{h^{pk_{0}}}\leq C\Vert \xi'\Vert  p^{3/4},
		\label{eq:3.6.8}
	\end{equation}
	where the constant $C>0$ is independent of $p$ and $z$.
	By Lemma \ref{lm:3.6.1} and \eqref{eq:3.6.5}, for $r\in 
	\;]0,r_{j}[\,$, we have
	\begin{equation}
		\inf_{z\in 
		\mathbb{D}(r,r_{j})}|E^{pk_{0}}_{1}(z)|_{h^{pk_{0}}}\geq 
		\frac{e^{-pb(r)}}{\mathrm{Area}(\Sigma)^{1/2}}.
		\label{eq:3.6.9}
	\end{equation}	
	Set
	\begin{equation}
		t_{p}(r)=\frac{e^{-pb(r)}}{C\mathrm{Area}(\Sigma)^{1/2}p^{3/4}\sqrt{d_{pk_{0}}}}>0.
		\label{eq:3.6.10}
	\end{equation}
	Note that $\Vert \xi'\Vert \leq \sqrt{d_{pk_{0}}} \max_{j\geq 
	2}|\xi_{j}|$, then for $r\in\,]0,r_{j}[\,$, we have
	\begin{equation}
	\Big\{s_{pk_{0}}=\sum_{j}\xi_{j} E^{pk_{0}}_{j}\;:\; 
	|\xi_{1}|>1, |\xi_{j}|<t_{p}(r), j\geq 2\Big\}
	\subset\Big\{s_{pk_{0}}\;:\; 
	\mathcal{N}^{\mathbb{D}(r,r_{j})}_{pk_{0}}(s_{pk_{0}})=0\Big\}.
		\label{eq:3.6.11}
	\end{equation}
	Therefore, for $r\in \;]0,r_{j}[\,$, we have
	\begin{equation}
		\begin{split}
	\Upsilon_{pk_{0}}\big(\{s_{pk_{0}}\;:\; 
	\mathcal{N}^{\mathbb{D}(r,r_{j})}_{pk_{0}}(s_{pk_{0}})=0\} \big) 
	\geq e^{-1}\Big(\frac{t_{p}(r)^{2}}{2}\Big)^{d_{pk_{0}}-1}.
		\end{split}
		\label{eq:3.6.12}
	\end{equation}
By \eqref{eq:3.6.2} there exists $c_{j}>0$ such that for 
	$r\in \;]0,r_{j}[\,$, $p\gg 0$,
	\begin{equation}
		e^{-1}\Big(\frac{t_{p}(r)^{2}}{2}\Big)^{d_{pk_{0}}-1}\geq 
		e^{-c_{j}|\log{r}|p^{2}}.
		\label{eq:3.6.13}
	\end{equation}
	This completes our Proof of \eqref{eq:1.2.5}.
\end{proof}

\section{Cusp forms on hyperbolic surfaces of finite 
volume}\label{section4} 
We give an important example where our results 
apply. Let $\overline\Sigma$ be a compact Riemann surface 
of genus $g$ and consider a finite set 
$D=\{a_1,\ldots,a_N\}\subset\overline\Sigma$. 
We also denote by $D$ the divisor $\sum_{j=1}^Na_j$ and let 
$\mathscr{O}_{\overline{\Sigma}}(D)$ 
be the associated line bundle.
Let $K_{\overline\Sigma}$ be the canonical
line bundle of $\overline\Sigma$.
The following conditions are equivalent:
\\[3pt]\indent
(i) $\Sigma=\overline\Sigma\smallsetminus D$ admits a complete 
K\"ahler-Einstein metric $\omega_\Sigma$
with $\operatorname{Ric}_{\omega_\Sigma}=-\omega_\Sigma$,
\\[2pt]\indent
(ii) $2g-2+N>0$, 
\\[2pt]\indent
(iii) the universal cover of $\Sigma$ is the upper-half plane
$\mathbb{H}$,
\\[2pt]\indent
(iv) $L=K_{\overline\Sigma}\otimes
\mathscr{O}_{\overline\Sigma}(D)$ is ample.
\\[3pt]
This follows from the Uniformization Theorem \cite[Chapter IV]{fk} 
and the fact that the Euler characteristic of $\Sigma$ equals 
$\chi(\Sigma)=2-2g-N$ and the degree of $L$ is 
\[\deg L=2g-2+N=-\chi(\Sigma).\] 
If one of these equivalent conditions is satisfied,
the K\"ahler-Einstein metric 
$\omega_{\Sigma}$ is induced by the Poincar\'e metric on 
$\mathbb{H}$;
$(\Sigma,\omega_{\Sigma})$ and
the formal square root of $(L,h)$ satisfy conditions \ref{item:alpha}
and \ref{item:beta}, see \cite[Lemma 6.2]{AMM:20}. 
Theorem \ref{thm:1.2.1},
Corollary \ref{C:1.2.2} and Proposition \ref{prop:1.2.3} 
hence apply to this context.

Let $\Gamma$ be the Fuchsian group associated with the above 
Riemann surface
$\Sigma$, that is, $\Sigma\cong\Gamma\backslash\mathbb{H}$.
Then $\Gamma$ is a geometrically
finite Fuchsian group of the first kind, without elliptic elements. 
Conversely, if $\Gamma$ is such a group, then 
$\Sigma:=\Gamma\backslash\mathbb{H}$
can by compactified by finitely many points $D=\{a_1,\ldots,a_N\}$
into a compact Riemann surface
$\overline\Sigma$ such that the equivalent conditions (i)-(iv) above
are fulfilled.

The space $\mathcal{M}_{2p}^\Gamma$ of 
$\Gamma$-modular forms 
of weight $2p$ is by definition the space 
of holomorphic functions $f\in\mathscr{O}(\mathbb{H})$
satisfying the functional equation
\begin{equation}\label{eq:6.24}
	f(\gamma z)=(cz+d)^{2p}f(z),\quad z\in\mathbb{H},\:\: 
	\gamma=\begin{pmatrix}a&b\\c&d\end{pmatrix}\in\Gamma,
\end{equation}
and which extend holomorphically to the cusps of $\Gamma$ (fixed
points of the parabolic elements).
If $f\in\mathscr{O}(\mathbb{H})$ satisfies
\eqref{eq:6.24}, then 
$fdz^{\otimes p}\in H^0(\mathbb{H},K_\mathbb{H}^p)$
descends to a holomorphic section $\Phi(f)$ of 
$H^0(\Sigma,K^p_\Sigma)\cong H^0(\Sigma,L^p)$. 
By \cite[Propositions\,3.3,\,3.4(b)]{Mum77}, $\Phi$ induces 
an isomorphism
$\Phi:\mathcal{M}_{2p}^\Gamma
\to H^0\big(\overline\Sigma,L^p\big)$.

The subspace of $\mathcal{M}_{2p}^\Gamma$ consisting of
modular forms vanishing at the cusps is called the 
space of \emph{cusp forms} (Spitzenformen) of weight $2p$ 
of $\Gamma$, denoted by $\mathcal{S}_{2p}^\Gamma$.
The space of cusps forms is endowed with 
the Petersson scalar product
\[
\langle f,g\rangle:=
\int_U f(z)\overline{g(z)}(2y)^{2p}\,dv_{\mathbb{H}}(z),
\]
where $U$ is a fundamental domain for $\Gamma$ and
$dv_{\mathbb{H}}=\frac{1}{2} y^{-2}dx\wedge dy$ is the 
hyperbolic volume form.

Under the above isomorphism, $\mathcal{S}_{2p}^\Gamma$
is identified to 
the space
$H^0\big(\overline\Sigma,L^p\otimes
\mathscr{O}_{\overline\Sigma}(D)^{-1}\big)=
H^0\big({\overline\Sigma},K_{\overline\Sigma}^p
\otimes\mathscr{O}_{\overline\Sigma}(D)^{p-1}\big)$
of holomorphic sections of $L^p$ over $\overline\Sigma$
vanishing on $D$.

If we endow $K_\mathbb{H}$ with the Hermitian metric
induced by the Poincar\'e metric on $\mathbb{H}$, 
the scalar product of two elements 
$udz^{\otimes p}, vdz^{\otimes p}\in K^p_{\mathbb{H},z}$
is $\langle udz^{\otimes p}, vdz^{\otimes p}\rangle
=u\overline{v}(2y)^{2p}$.
Hence, the Petersson scalar product corresponds to the
$\mathcal{L}^{2}$ inner product of pluricanonical forms on $\Sigma$,
\[\langle f,g\rangle=
\int_{\Sigma} \langle\Phi(f),\Phi(g)\rangle\, \omega_{\Sigma}\,,
\quad f,g\in\mathcal{S}_{2p}^\Gamma\,. 
\] 
The isomorphism $\Phi$ gives thus an isometry 
(see also \cite[Section\,6.4]{CM11})
\begin{equation}\label{eq:6.25}
	\mathcal{S}_{2p}^\Gamma\cong
	H^0\big(\overline\Sigma,L^p\otimes\mathscr{O}_{\overline\Sigma}
	(D)^{-1}\big)\cong
	H^0_{(2)}(\Sigma,K_\Sigma^p)\cong H^0_{(2)}(\Sigma, L^p),
\end{equation}
where $H^0_{(2)}(\Sigma, L^p)$ is the space of holomorphic
sections of $L^p$ that are square-integrable with respect to 
the volume form $\omega_\Sigma$
and the metric $h^p$ on $L^p$, with $h$ introduced in \cite[Lemma 6.2]{AMM:20}. 
Moreover, $H^0_{(2)}(\Sigma,K_\Sigma^p)$ is the space
of $\mathcal{L}^{2}$-pluricanonical
sections with respect to the metric $h^{K^p_\Sigma}$
and the volume form $\omega_\Sigma$, where we denote by 
$h^{K_\Sigma}$ the Hermitian metric induced by 
$\omega_\Sigma$ on $K_\Sigma$. 

We thus identify the space of cusp forms 
$\mathcal{S}_{2p}^\Gamma$ to a subspace of holomorphic sections
of $L^p$ by \eqref{eq:6.25}.
\begin{corollary}    \label{crl_CrlIntro1}
	Let $\Gamma\subset\operatorname{PSL}(2,\R)$ be 
	a geometrically finite Fuchsian group of the first kind 
	without elliptic elements. 
	Then the assertions of Theorems \ref{thm:1.3.2} \& \ref{thm:1.2.1},
	Corollary \ref{C:1.2.2} and Proposition \ref{prop:1.2.3}
	hold for the zeros of cusp forms $s_p\in\mathcal{S}^\Gamma_{2p}$.
\end{corollary}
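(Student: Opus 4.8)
The plan is to prove Corollary \ref{crl_CrlIntro1} by transporting every statement through the isometry $\Phi$ of \eqref{eq:6.25}; no new analysis is needed, only a careful dictionary. First I would recall that under the equivalent conditions (i)--(iv) the pair $(\Sigma,\omega_\Sigma)$ with $\Sigma=\Gamma\backslash\mathbb{H}$, together with the Hermitian line bundle $(L,h)$ where $L=K_{\overline\Sigma}\otimes\mathscr{O}_{\overline\Sigma}(D)$ and $h$ is the metric of \cite[Lemma 6.2]{AMM:20}, satisfies Assumptions \ref{item:alpha} and \ref{item:beta}. Hence Theorems \ref{thm:1.3.2} and \ref{thm:1.2.1}, Corollary \ref{C:1.2.2} and Proposition \ref{prop:1.2.3} apply verbatim to $H^0_{(2)}(\Sigma,L^p)$ equipped with the measures $\Upsilon_p$ of Section \ref{section1.3prob}.

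Next I would set up the probabilistic correspondence. For each $p$, fix an orthonormal basis $O_p=\{S^p_j\}_{j=1}^{d_p}$ of $H^0_{(2)}(\Sigma,L^p)$ for the $\mathcal{L}^2$-product; since \eqref{eq:6.25} is an isometry, $\{\Phi^{-1}(S^p_j)\}_{j=1}^{d_p}$ is an orthonormal basis of $\mathcal{S}^\Gamma_{2p}$ for the Petersson product. Using the same family $\{\eta^p_j\}$ as in Section \ref{section1.3prob}, a random cusp form $f_p=\sum_{j=1}^{d_p}\eta^p_j\,\Phi^{-1}(S^p_j)$ satisfies, by $\C$-linearity of $\Phi$, the identity $\Phi(f_p)=\sum_{j=1}^{d_p}\eta^p_j S^p_j=s_p$, which has law exactly $\Upsilon_p$. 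Thus the pushforward of $\Upsilon_p$ under $\Phi^{-1}$ is precisely the law of $f_p$, and every $\Upsilon_p$-event about $s_p$ translates into the corresponding event about $f_p$.

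The crucial point is that $\Phi$ preserves the geometric quantities appearing in the statements. On a coordinate chart in $\Sigma$ away from the cusps, $\Phi$ sends $f\,dz^{\otimes p}$ to $f$ times a nowhere-vanishing holomorphic frame of $L^p$, so $\mathrm{Div}(f)=\mathrm{Div}(\Phi(f))$ as divisors on $\Sigma$ (with multiplicities); hence $\mathcal{N}^U_p(f)=\mathcal{N}^U_p(\Phi(f))$ for every open $U\subset\Sigma$ and $[\mathrm{Div}(f)]=[\mathrm{Div}(\Phi(f))]$ as $(1,1)$-currents. Near a cusp the same holds after the change of frame, and the vanishing at $D$ is built into the definitions of $\mathcal{S}^\Gamma_{2p}$ and of $H^0_{(2)}(\Sigma,L^p)$ for $p\ge 2$. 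Since the metric on $K_\mathbb{H}$ induced by the Poincar\'e metric gives $\langle u\,dz^{\otimes p},u\,dz^{\otimes p}\rangle=|u|^2(2y)^{2p}$, the pointwise norm $|\Phi(f)|_{h^p}$ equals the natural pointwise norm of $f$ with the weight $(2y)^{2p}$, so $\mathcal{M}^U_p(f)=\mathcal{M}^U_p(\Phi(f))$. With these identifications in hand, the two-sided bounds on $\mathbb{E}[\mathcal{M}^U_p(s_p)]$ and the tail bound \eqref{eq:5.2.2} of Theorem \ref{thm:1.3.2}, the almost sure equidistribution \eqref{eq:1.4.4DLM} and the large deviation estimate \eqref{eq:1.4.5DLM} of Theorem \ref{thm:1.2.1}, the hole probability bound of Corollary \ref{C:1.2.2}, and the Gaussian lower bounds \eqref{eq:1.2.4}--\eqref{eq:1.2.5} of Proposition \ref{prop:1.2.3} all translate word for word into the corresponding assertions for $s_p=\Phi(f_p)\in\mathcal{S}^\Gamma_{2p}$; for the lower bounds in Proposition \ref{prop:1.2.3} one additionally invokes the ampleness of $L$ on $\overline\Sigma$ from (iv) to produce, for large $p$, sections of $H^0_{(2)}(\Sigma,L^p)$ non-vanishing on a prescribed relatively compact set or cusp neighbourhood, which again corresponds to a cusp form with the analogous non-vanishing property.

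I do not expect a genuine obstacle: the entire content is a translation through \eqref{eq:6.25}. The only points demanding care are the matching of pointwise norms and of divisors near the cusps---which follows from the explicit shape of $h$ in \cite[Lemma 6.2]{AMM:20} together with the fact that cusp forms vanish on $D$---and keeping track of the bookkeeping that weight $2p$ corresponds to the power $L^p$, which is purely notational.
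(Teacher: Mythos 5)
Your proposal is correct and matches the paper's (implicit) argument: the paper establishes the corollary simply by verifying, via \cite[Lemma 6.2]{AMM:20}, that $(\Sigma,\omega_\Sigma)$ and $(L,h)$ with $L=K_{\overline\Sigma}\otimes\mathscr{O}_{\overline\Sigma}(D)$ satisfy Assumptions \ref{item:alpha}, \ref{item:beta}, and then transporting everything through the isometry $\Phi$ in \eqref{eq:6.25}, exactly as you describe. You spell out the translation of divisors, sup-norms, and probability measures in more detail than the paper, but the route is the same.
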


\section{Higher dimensional complex Hermitian 
manifolds}\label{section:higherdim}
In this section, we consider the extension of the above results to 
the noncompact complete complex Hermitian manifold of higher 
dimension. Our geometric setting is described in Subsection 
\ref{section1.6}. At 
first, we recall the Bergman kernel expansion under this setting.

By \cite[Theorems 4.2.1 \& 
6.1.1]{MM07}, the Bergman kernel expansions described in Section 
\ref{section2bergman}, for both on-diagonal and 
off-diagonal, still hold. More precisely, there exist coefficients $a_{r}\in C^{\infty}(X)$, 
$r\in\bN$ such that the following asymptotic expansion
\begin{equation}
	B_{p}(x,x)=\sum_{r=0}^{\infty} a_{r}(x)p^{m-r}
\end{equation}
holds for any $\mathcal{C}^{\ell}$-topology on compact sets of $X$. 
In particular, let $\dot{R}^{L}\in\mathrm{End}(T^{(1,0)}X)$ such that 
for $u,v\in T_{x}^{(1,0)}X$,
\begin{equation}
	R^{L}(u,v)=g^{TX}(\dot{R}^{L}u,v),
\end{equation}
then 
\begin{equation}
a_{0}(x)=\det\Big(\frac{\dot{R}^{L}}{2\pi}\Big)>\varepsilon_{1}^{m}.
\end{equation}
In particular, if $K\subset X$ is compact, then there exists 
$C_{K}>0$ such that for $p\gg 0$,
\begin{equation}
	\max_{x\in K}B_{p}(x,x)\leq C_{K}\, p^{m}.
	\label{eq:5.1.4paris}
\end{equation}

If $X$ is noncompact, then the existence of a complete metric 
$\omega$ with $iR^{L}> \varepsilon_{1} 
\omega$ ($\varepsilon_{1} >0$) is equivalent to saying that $iR^{L}$ 
defines a complete K\"{a}hler metric. Recall that the volume 
$\mathrm{Vol}^{L}_{2m}(\cdot)$ is defined in \eqref{eq:1.6.8DLM}. As 
in \cite[Corollary 2.2]{DMS12}, under the Assumption \eqref{eq:6.0.4}, 
we have
\begin{equation}
	0< \frac{1}{m!}\int_{X}c_{1}(L,h)^{m}\leq \liminf_{p\rightarrow 
	\infty} p^{-m}d_{p}< \infty.
\end{equation}
As a consequence, we get
\begin{equation}
	d_{p}\simeq p^{m}\;,\;\;\mathrm{Vol}^{L}_{2m}(X)<\infty.	
	\label{eq:5.1.2DLM}
\end{equation}
Then for any open subset $U\subset X$, we have 
$\mathrm{Vol}^{L}_{2m}(U)<\infty$.

Furthermore, the off-diagonal and near-diagonal expansions as in Proposition 
\ref{prop:2.2.1} also hold (with suitable change according to the 
dimension $m$). For a precise statement on the near-diagonal 
expansion, we need to introduce the the 
complex coordinates for the real tangent space $T_{x}X$, $x\in X$.

Fix a point $x\in X$. Let $\{\mathbf{f}_{j}\}_{j=1}^{m}$ be an orthonormal basis of 
$(T_{x}^{1,0}X, g_{x}^{TX}(\cdot,\overline{\cdot}))$ such that
\begin{equation}
	\dot{R}^{L}_{x}\,\mathbf{f}_{j}=\mu_{j}(x)\mathbf{f}_{j},
\end{equation}
where $\mu_{j}(x)$, $j=1, \ldots, m$ are the eigenvalues of 
$\dot{R}^{L}_{x}$. We have
\begin{equation}
	\mu_{j}(x)>\varepsilon_{1},\; 
	a_{0}(x)=\prod_{j=1}^{m}\frac{\mu_{j}(x)}{2\pi}.
\end{equation}
Set 
$\mathbf{e}_{2j-1}=\frac{1}{\sqrt{2}}(\mathbf{f}_{j}+\overline{\mathbf{f}}_{j})$, 
$\mathbf{e}_{2j}=\frac{\sqrt{-1}}{\sqrt{2}}(\mathbf{f}_{j}-\overline{\mathbf{f}}_{j})$, $j=1, \ldots, m$. Then they form an orthonormal basis of the (real) tangent vector space $(T_{x}X, g_{x}^{TX})$. Now we introduce the complex coordinate for $T_{x}X$. If $v=\sum_{j=1}^{2m} v_{j}\mathbf{e}_{j}\in T_{x}X$, we can write
\begin{equation}
	v=\sum_{j=1}^{m}(v_{2j-1}+\sqrt{-1}v_{2j})\frac{1}{\sqrt{2}} 
	\mathbf{f}_{j} + \sum_{j=1}^{m}(v_{2j-1}-\sqrt{-1}v_{2j})\frac{1}{\sqrt{2}} 
	\overline{\mathbf{f}}_{j}.
	\label{eq:5.1.8paris}
\end{equation}
Set $z=(z_{1},\ldots,z_{m})$ with $z_{j}=v_{2j-1}+\sqrt{-1}v_{2j}$, 
$j=1,\ldots, m$. We call $z$ the complex coordinate of $v\in T_{x}X$. 
Then by \eqref{eq:5.1.8paris},
\begin{equation}
	\frac{\partial}{\partial z_{j}} = \frac{1}{\sqrt{2}} 
	\mathbf{f}_{j} , \; \frac{\partial}{\partial \overline{z}_{j}} = \frac{1}{\sqrt{2}} 
	\overline{\mathbf{f}}_{j},
	\label{eq:5.1.9paris}
\end{equation}
so that
\begin{equation}
	v=\sum_{j=1}^{m}\Big(z_{j} \frac{\partial}{\partial z_{j}} 
	+\overline{z}_{j} \frac{\partial}{\partial \overline{z}_{j}}\Big).
	\label{eq:5.1.10paris}
\end{equation}
Note that $|\frac{\partial}{\partial z_{j}}|^{2}_{g^{TX}}=|\frac{\partial}{\partial 
\overline{z}_{j}}|^{2}_{g^{TX}}=\frac{1}{2}$.
For $v, v'\in T_{x}X$, let $z, z'$ denote the corresponding 
complex coordinates. Define
\begin{equation}
\mathcal{P}_{x}(v,v')=\prod_{j=1}^{m} 
\frac{\mu_{j}(x)}{2\pi}
\exp\Big(\!-\frac{1}{4}\sum_{j=1}^{m}\mu_{j}(x)(|z_{j}|^{2}
+|z'_{j}|^{2}-2z_{j}\overline{z}_{j})\Big).
	\label{eq:5.1.11paris}
\end{equation}	
Define a weighted distance function $\Phi^{TX}_{x}(v,v')$ as follows,
\begin{equation}
\Phi^{TX}_{x}(v,v')^{2}=\sum_{j=1}^{m}\mu_{j}(x)|z_{j}-z'_{j}|^{2}.
\end{equation}
Then 
\begin{equation}
|\mathcal{P}_{x}(v,v')|=\prod_{j=1}^{m} 
\frac{\mu_{j}(x)}{2\pi}\exp\Big(\!-\frac{1}{4}\Phi^{TX}_{x}(v,v')^{2}\Big).
\end{equation}
For sufficiently small $\delta_{0}>0$, we identify the small open 
ball
$B^{X}(x,2\delta_{0})$ in $X$ with the ball $B^{T_{x}X}(0, 
2\delta_{0})$ in $T_{x}X$ via the geodesic coordinate. 	
Let $\mathrm{dist}(\cdot,\cdot)$ denote the Riemannian distance 
of $(X, g^{TX})$. There exists $C_{2}>0$ such that for $v,v'\in 
B^{T_{x}X}(0, 2\delta_{0})$, we 
have
\begin{equation}
	C_{2}\mathrm{dist}(\exp_{x}(v),\exp_{x}(v'))\geq \Phi^{TX}_{x}(v,v')\geq 
	\frac{1}{C_{2}}\mathrm{dist}(\exp_{x}(v),\exp_{x}(v')).
	\label{eq:5.1.15DLM}
\end{equation}
In particular,
\begin{equation}
	\Phi^{TX}_{x}(0,v)\geq \varepsilon_{1}^{1/2} 
	\mathrm{dist}(x,\exp_{x}(v)).
	\label{eq:5.1.15paris}
\end{equation}
Moreover, if we consider a compact subset $K\subset X$, the 
constants $\delta_{0}$ and $C_{1}$ can be chosen uniformly for all 
$x\in K$.

We trivialize the line bundle $L$ on $B^{T_{x}X}(0, 2\delta_{0})$ 
using the parallel transport with respect to $\nabla^{L}$ along 
the curve $[0,1]\ni t\mapsto tv$, $v\in B^{T_{x}X}(0, 
2\delta_{0})$. Under this trivialization, for $v,v' \in B^{T_{x}X}(0, 
2\delta_{0})$, 
\begin{equation}
	B_{p}(\exp_{x}(v),\exp_{x}(v'))\in\mathrm{End}(L_{x})=\C.
\end{equation}

By \cite[Theorems 4.2.1 \& 
6.1.1]{MM07}, for any compact subset $K\subset X$, and for any $N\in \mathbb{N}$, there 
exists $\delta>0$ and constants $C,C'>0$ such that for 
$x\in K$, $v,v'\in T_{x}X$, 
$\Vert v\Vert,\,\Vert v'\Vert \leq 2\delta$, instead of \eqref{eq:2.2.2}, we have
\begin{equation}
	\begin{split}
		&\bigg|\frac{1}{p^{m}}B_{p}(\exp_{x}(v),\exp_{x}(v'))-\sum_{r=0}^{N}\mathcal{F}_{r}(\sqrt{p}v,\sqrt{p}v')\kappa^{-1/2}(v)\kappa^{-1/2}(v')p^{-r/2}\bigg|\\
		&\leq 
		Cp^{-(N+1)/2}(1+\sqrt{p}\Vert v\Vert+\sqrt{p}\Vert 
		v'\Vert)^{2(N+m)+4}\exp(-C'\sqrt{p}\Vert v-v'\Vert)+\mathcal{O}(p^{-\infty}).
	\end{split}
	\label{eq:5.2.2paris}
\end{equation}
The functions $\mathcal{F}_{r}$, $r\in\bN$ are given as follows,
\begin{equation}
	\mathcal{F}_{r}(v,v')=\mathcal{P}_{x}(v,v')\mathcal{J}_{r}(v,v'),
	\label{eq:5.1.19DLM}
\end{equation}
where $\mathcal{J}_{r}(v,v')$ is a polynomial in $v,v'$ of degree $\leq 3r$, whose 
coefficients are smooth in $x\in X$. In particular,
\begin{equation}
	\mathcal{J}_{0}=1.
\end{equation}

The normalized Bergman kernel $P_{p}(x,y)$ for $x,y\in X$ is defined as in 
\eqref{eq:1.3.1}.
Then by exactly the same 
arguments in Subsection \ref{section2.3DLM}, we get a version of Theorem 
\ref{thm:1.3.1} for $P_{p}$ as follows.
\begin{theorem}\label{thm:5.1.1}
	Let $U$ be a relatively compact open subset of $X$, then the 
	following uniform estimates on the normalized Bergman kernel hold 
	for $x,y\in U$:
	fix $k\geq 1$ and $b>\sqrt{16k/\varepsilon_{1}}$, then we have 
	\begin{equation}
		P_{p}(x,y)=\begin{cases} 
		&(1+o(1))\exp(-\frac{p}{4}\Phi_{x}(0,v')^{2}),  \\
		&\qquad\qquad\qquad\text{uniformly for } 
		\mathrm{dist}(x,y=\exp_{x}(v'))=\Vert v'\Vert  \leq b\sqrt{\frac{\log p}{p}}, \\
		&\mathcal{O}(p^{-k}),  \qquad\;\;   \text{uniformly for } 
		\mathrm{dist}(x,y) \geq 
		b\sqrt{\frac{\log p}{p}}.
	\end{cases}
	\label{eq:5.2.4}
\end{equation}
\end{theorem}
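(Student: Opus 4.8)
The plan is to transcribe, step by step, the two-step argument of Subsection~\ref{section2.3DLM} (the proof of Theorem~\ref{thm:1.3.1}) into the $m$-dimensional setting, performing three systematic replacements throughout: the power $p$ is replaced by $p^{m}$, since on a compact set $B_{p}(x,x)=a_{0}(x)p^{m}+\mathcal{O}(p^{m-1})$ with $a_{0}(x)=\det\big(\dot{R}^{L}_{x}/2\pi\big)>\varepsilon_{1}^{m}$; the one-dimensional Gaussian weight $a(x)\,\mathrm{dist}(x,y)^{2}$ is replaced by the anisotropic weight $\Phi^{TX}_{x}(0,v')^{2}=\sum_{j}\mu_{j}(x)|z'_{j}|^{2}$ governing $\mathcal{P}_{x}$ in \eqref{eq:5.1.11paris}; and the constant $\varepsilon_{0}$ is replaced by $\varepsilon_{1}$ of \eqref{A1}. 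Since $U$ is relatively compact, $K:=\overline{U}$ is compact, so \eqref{eq:5.1.4paris}, the on-diagonal expansion, the far off-diagonal decay (the $m$-dimensional analogue of the first part of Proposition~\ref{prop:2.2.1}), and the near-diagonal expansion \eqref{eq:5.2.2paris} all hold with constants uniform in $x\in K$.

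For the far regime $\mathrm{dist}(x,y)\ge b\sqrt{\tfrac{\log p}{p}}$: if $\mathrm{dist}(x,y)\ge\delta$ (with $\delta$ the radius in \eqref{eq:5.2.2paris}), the far off-diagonal decay already gives $|B_{p}(x,y)|\lesssim p^{-k}$, hence $P_{p}(x,y)\lesssim p^{-k}$ after dividing by $\sqrt{B_{p}(x,x)B_{p}(y,y)}\gtrsim p^{m}$. If $b\sqrt{\tfrac{\log p}{p}}\le\mathrm{dist}(x,y)<\delta$ and $p$ is large, I would apply \eqref{eq:5.2.2paris} with $N=2k+1$, $v=0$, $y=\exp_{x}(v')$. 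The remainder of $p^{-m}B_{p}$ is $\lesssim p^{-(k+1)}(1+\sqrt{p}\,\|v'\|)^{2(2k+1+m)+4}e^{-C'\sqrt{p}\|v'\|}\lesssim p^{-(k+1)}(\log p)^{2k+m+3}$, and each model term is controlled by $|\mathcal{F}_{r}(0,\sqrt{p}v')|=a_{0}(x)\exp\!\big(\!-\tfrac{p}{4}\Phi^{TX}_{x}(0,v')^{2}\big)|\mathcal{J}_{r}(0,\sqrt{p}v')|$ together with $\deg\mathcal{J}_{r}\le 3r$ and the lower bound $\Phi^{TX}_{x}(0,v')^{2}\ge\varepsilon_{1}\,\mathrm{dist}(x,y)^{2}\ge\varepsilon_{1}b^{2}\tfrac{\log p}{p}$ from \eqref{eq:5.1.15paris}; since $b>\sqrt{16k/\varepsilon_{1}}$ this makes $|\mathcal{F}_{r}(0,\sqrt{p}v')p^{-r/2}|\lesssim p^{r-\varepsilon_{1}b^{2}/4}\lesssim p^{-(2k-1)}$ for $0\le r\le 2k+1$. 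Summing and dividing by $\sqrt{B_{p}(x,x)B_{p}(y,y)}\gtrsim p^{m}$ yields $P_{p}(x,y)=\mathcal{O}(p^{-k})$, uniformly on $K$, which is the second line of \eqref{eq:5.2.4}.

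For the near regime $\|v'\|=\mathrm{dist}(x,y)\le b\sqrt{\tfrac{\log p}{p}}$: I would apply \eqref{eq:5.2.2paris} with $N$ chosen large enough (depending only on $b$, $k$ and $\sup_{K}\max_{j}\mu_{j}$), use $\mathcal{J}_{0}=1$ and $|\mathcal{J}_{r}(0,\sqrt{p}v')|\lesssim(\log p)^{3r/2}$ to get $\sum_{r=0}^{N}\mathcal{J}_{r}(0,\sqrt{p}v')p^{-r/2}=1+\mathcal{O}(p^{-1/2}(\log p)^{3/2})$, and $\mathcal{P}_{x}(0,\sqrt{p}v')=a_{0}(x)\exp\!\big(\!-\tfrac{p}{4}\Phi^{TX}_{x}(0,v')^{2}\big)$, to obtain
\[
p^{-m}B_{p}(x,y)=a_{0}(x)\,\kappa^{-1/2}(v')\exp\!\Big(\!-\tfrac{p}{4}\Phi^{TX}_{x}(0,v')^{2}\Big)\big(1+\mathcal{O}(p^{-1/2}(\log p)^{3/2})\big)+\mathcal{O}\big(p^{-(N+1)/2}(\log p)^{N+m+2}\big).
\]
Multiplying by $\exp\!\big(\tfrac{p}{4}\Phi^{TX}_{x}(0,v')^{2}\big)\lesssim p^{C(b)}$, dividing by $\sqrt{B_{p}(x,x)B_{p}(y,y)}$, inserting the on-diagonal expansions $B_{p}(z,z)=a_{0}(z)p^{m}+\mathcal{O}(p^{m-1})$, and using $\kappa(0)=1$, $a_{0}(x)=|\mathcal{P}_{x}(0,0)|$ and continuity of $a_{0},\kappa$ on $K$, one gets (for $N$ large) $\exp\!\big(\tfrac{p}{4}\Phi^{TX}_{x}(0,v')^{2}\big)P_{p}(x,y)=1+\mathcal{O}\big(\|v'\|+p^{-1/2}(\log p)^{3/2}+p^{C(b)-(N+1)/2}(\log p)^{N+m+2}\big)=1+o(1)$, uniformly on $K$, mirroring \eqref{eq:3.1.8bis}. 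Finally, since \eqref{eq:5.2.2paris} is written in the radial-geodesic trivialization of $L$, passing to $|B_{p}(x,y)|_{h^{p}_{x}\otimes h^{p,\ast}_{y}}$ costs only a factor $1+\mathcal{O}(\|v'\|)$, which is absorbed into $o(1)$; this gives the first line of \eqref{eq:5.2.4}.

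I expect the only genuinely new point --- beyond bookkeeping of the dimension-dependent exponents $m$, $2(N+m)+4$, $3r$ --- to be the consistent handling of the anisotropic weight $\Phi^{TX}_{x}$ in place of $a(x)\,\mathrm{dist}(x,y)^{2}$: one must invoke the two-sided comparison \eqref{eq:5.1.15DLM}--\eqref{eq:5.1.15paris} both to convert the metric-ball hypotheses into bounds on $\Phi^{TX}_{x}$ that are uniform in $x\in\overline{U}$ and to ensure that in the far regime the exponent $\varepsilon_{1}b^{2}/4$ actually beats $k$; and, in the near regime, one must take the expansion order $N$ in \eqref{eq:5.2.2paris} large enough as a function of $b$ so that the remainder, after reinstating the Gaussian factor $\exp(\tfrac{p}{4}\Phi^{TX}_{x}(0,v')^{2})$, is still negligible. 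Everything else is a verbatim repetition of the argument in Subsection~\ref{section2.3DLM}.
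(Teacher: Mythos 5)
Your proposal is correct, and it follows precisely the route the paper itself indicates: the paper's ``proof'' of Theorem~\ref{thm:5.1.1} is a one-line remark that the argument is the same as in Subsection~\ref{section2.3DLM}, and what you wrote out is exactly that argument transcribed to dimension $m$ with the three obvious replacements $p\leadsto p^{m}$, $a(x)\,\mathrm{dist}(x,y)^2\leadsto\Phi^{TX}_x(0,v')^2$, $\varepsilon_0\leadsto\varepsilon_1$. The far-regime estimates (Step 1) are a verbatim transcription and all the exponent bookkeeping checks out: $\varepsilon_1 b^{2}/4>4k$ beats every $r\le 2k+1$ and the remainder, so $p^{-m}B_p(x,y)\lesssim p^{-(2k-1)}$ and $P_p(x,y)=\mathcal{O}(p^{-k})$.

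There is one point in your Step 2 which is worth flagging because it is actually a correction, not merely a transcription. The paper's one-dimensional Step~2 fixes the expansion order at $N=1$, obtains \eqref{eq:3.1.7} with remainder $\mathcal{O}(|\log p|^{4})$, and then in \eqref{eq:3.1.8bis} treats this remainder, after division by $\sqrt{B_p(x,x)B_p(y,y)}\simeq p$, as contributing $\mathcal{O}(p^{-1}|\log p|^{4})$. But the left-hand side of \eqref{eq:3.1.8bis} carries the multiplicative factor $\exp(\tfrac14 a(x)p\|v'\|^{2})$, which at the outer edge $\|v'\|=b\sqrt{\log p/p}$ of the near regime is of size $p^{a(x)b^{2}/4}$; since $b>\sqrt{16k/\varepsilon_0}$ forces $a(x)b^{2}/4>4k\ge 4$, the $N=1$ remainder after reinstating this factor does \emph{not} go to zero. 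The clean way to repair this is precisely what you do: take $N$ large enough, depending on $b$ and $\sup_{\overline U}\max_j\mu_j$, so that the remainder $\mathcal{O}\big(p^{-(N+1)/2}(\log p)^{N+m+2}\big)$, after multiplication by $\exp(\tfrac{p}{4}\Phi^{TX}_x(0,v')^{2})\lesssim p^{C(b)}$, is still $o(1)$ uniformly. Your proposal thus closes a small gap that is present already in the paper's proof of Theorem~\ref{thm:1.3.1}(b). Everything else — the two-sided comparison \eqref{eq:5.1.15DLM}--\eqref{eq:5.1.15paris} to pass between $\mathrm{dist}$ and $\Phi^{TX}_x$ uniformly on $\overline U$, the use of $\kappa(0)=1$, $\mathcal J_0=1$, and the $\mathcal{O}(\|v'\|)$ correction from the change of trivialization — is a faithful repetition of the punctured-surface argument.
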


Now we start to give the proofs to the theorems in Subsection 
\ref{section1.6}. Most of the arguments are exactly the same as given 
in Section \ref{section3DLM}.

\begin{proof}[Proof of Theorem \ref{thm:1.6.1}]
Since $U$ is relatively compact in $X$, 
the Bergman kernel $B_{p}(x,x)$, $x\in U$, 
is uniformly bonded above by $C_{\overline{U}}\,p^{m}$
by \eqref{eq:5.1.4paris}. 
By \eqref{eq:5.1.2DLM}, $d_{p}\simeq p^{m}$. The proof of 
	\eqref{eq:1.6.7paris} follows exactly by the arguments in the 
	proof of Proposition \ref{prop:5.2.2}. In particular, we have for 
	$p\gg 0$,
	\begin{equation}
		\mathbb{E}[|\mathcal{M}^{U}_{p}(s_{p})|^{d_{p}}]\lesssim 
		d_{p}^{2d_{p}+3/2}\leq p^{Cp^{m}},
		\label{eq:5.1.20DLM}
	\end{equation}
	where the constant $C>0$ is sufficiently large.

	Now we prove \eqref{eq:1.6.8paris}. At first, by \eqref{eq:5.1.20DLM}, as 
	in Corollary \ref{cor:3.1.6}, there exists a constant $C>0$, such that for any sequence 
	$\{\lambda_{p}\}_{p\in \mathbb{N}}$  of strictly positive 
	numbers, we have
	\begin{equation}
		\begin{split}
			\Upsilon_{p}\Big(\big\{s_{p}\;:\; 
			\mathcal{M}^{U}_{p}(s_{p})\geq \lambda_{p}\big\}\Big)\lesssim 
			e^{-d_{p}\log\lambda_{p}+Cd_{p}\log p}.
		\end{split}
		\label{eq:5.1.21DLM}
	\end{equation}
	
	Secondly, we apply the same 
	arguments in Subsection \ref{section3.3DLM} by taking the lattice 
	points 
	$x^{p}_{v}$, $v\in \Gamma_{p}\subset \Z^{m}$, near a fixed point 
	$x_{0}\in U$, where we identify 
	$(T_{x_{0}}X,g^{T_{x_{0}}X})$ with $\R^{m}$. Note that $n:=\sharp 
	\Gamma_{p}\simeq d_{p}$. As in 
	\eqref{eq:53.2.10}
	\begin{equation}
		\frac{d^{2}}{4p}\Vert u-v\Vert ^{2}\leq\mathrm{dist}(x^{p}_{u},x^{p}_{v})^{2}\leq \frac{4d^{2}}{p}\Vert u-v\Vert ^{2}.
		\label{eq:5.1.24DLM}
	\end{equation}
	Let $\xi(u,v)\in T_{x^{p}_{u}}X$ be the unique vector (with small 
	norm) such that 
	$\exp_{x^{p}_{u}}(\xi(u,v))=x^{p}_{v}$.
	By \eqref{eq:5.1.15paris}, \eqref{eq:5.1.24DLM},
	\begin{equation}
		\Phi_{x^{p}_{u}}(0,\xi(u,v))^{2} \geq 
		\frac{\varepsilon_{1}d^{2}}{4p}\Vert u-v\Vert ^{2}.
	\end{equation}
	This is an analog of \eqref{eq:53.2.24}. Then using instead 
	Theorem \ref{thm:5.1.1} and proceeding as in \eqref{eq:53.2.17} - \eqref{eq:3.3.37DLM}, we get that, for a sequence 
	$\{\lambda_{p}\}_{p\in \mathbb{N}}$  of positive 
	numbers less than $1$, there exist constants $C>0$, 
	$C'>0$ such that for $\forall\, p\gg 0$,
	\begin{equation}
		\Upsilon_{p}\Big(\big\{s_{p}\;:\; \mathcal{M}^{U}_{p}(s_{p})\leq 
		\lambda_{p}\big\}\Big)\leq e^{Cd_{p}\log \lambda_{p} +C'd_{p}\log 
		p}.
		\label{eq:5.1.25DLM}
	\end{equation}	
	Taking $\lambda_{p}=e^{\delta p}$ in \eqref{eq:5.1.21DLM} and 
	$\lambda_{p}=e^{-\delta p}$ in \eqref{eq:5.1.25DLM}, we get  
	\eqref{eq:1.6.8paris} upon using $d_{p}\simeq p^{m}$. This completes our proof.
\end{proof}

\begin{proof}[Proof of Theorem \ref{thm:6.2}]
	The first part of this theorem is an analog of 
	Theorem \ref{thm:1.3.5}, and its proof will follow the arguments as 
	explained in Subsections \ref{subs3.3} \& \ref{subs3.4}, by using 
	instead Theorem \ref{thm:1.6.1}. Recall that $\mathrm{dV}=\frac{\omega^{m}}{m!}$ is the volume element on $X$.
	
	Indeed, the sketched proof in Subsection \ref{subs3.3} 
	(cf.\cite[Subsection 4.1]{SZZ}) also proves that, for the relative compact 
	open subset $U$, and for any $\delta>0$, 
	there exists $C_{U,\delta}>0$ such that
	\begin{equation}
		\Upsilon_{p}\left(\Big\{s_{p}\;:\; 
		\int_{U}\big|\log{|s_{p}|_{h^{p}}}\big|\mathrm{dV}\geq 
		\delta p\Big\}\right)\leq e^{-C_{U,\delta}p^{m+1}}, \,\forall\, p\gg 0.
		\label{eq:5.1.27paris}
	\end{equation}	
To get \eqref{eq:6.0.7}, we apply the Poincar\'{e}-Lelong formula
for $\varphi\in \Omega^{m-1,m-1}_{0}(U)$,
\begin{equation}
([\mathrm{Div}(s_{p})]-pc_{1}(h,L),\varphi)=
\frac{i}{\pi}\int_{U}\log|s_{p}|_{h_{p}}\partial\overline{\partial}\varphi.
	\end{equation}
Then 
\begin{equation}
\begin{split}	
\left|\left(\frac{1}{p}[\mathrm{Div}(s_{p})]-c_{1}(h,L),\varphi\right)\right|&
\leq\frac{1}{\pi p}\left|\int_{U}\log|s_{p}|_{h_{p}}
\partial\overline{\partial}\varphi\right|\\
&\leq \frac{1}{\pi p}\sup_{x\in X}\left|
\frac{\partial\overline{\partial}\varphi(x)}{\mathrm{dV}(x)}\right|
\cdot\int_{U}\big|\log{|s_{p}|_{h^{p}}}\big|\,\mathrm{dV}.
		\end{split}
		\label{eq:5.1.29paris}
	\end{equation}
	Then by \eqref{eq:5.1.27paris}, we get \eqref{eq:6.0.7}. 
	As a consequence of \eqref{eq:6.0.7}, 
	the proof to \eqref{eq:1.6.2DLM} follows exactly from the same 
	arguments as in the part (a) of Subsection \ref{subs3.5}. This 
	completes our proof.
\end{proof}

\begin{proof}[Proof of Theorem \ref{thm:6.3}]
	We only need to prove \eqref{eq:6.1.10}, and 
	\eqref{eq:1.6.14paris} is just its direct consequence. Due to 
	results in Theorem \ref{thm:6.2} and that $U$ is relatively 
	compact, the Proof of this theorem is quite routine as in 
	Subsection \ref{subs3.5} (also in \cite[Subsection 4.2.2]{SZZ}).
	
	Let $\delta>0$ be arbitrary, and we take $\psi_{1}$, $\psi_{2}\in 
\mathcal{C}^{\infty}_{0}(X,\R_{\geq 0})$ such that
\begin{equation}
	\begin{split}
		&0\leq \psi_{1}\leq\chi_{U}\leq \psi_{2}\leq 1,\\
		&\int_{X} \psi_{1}\frac{c_{1}(L,h)^{m}}{m!}\geq 
		\mathrm{Vol}^{L}_{2m}(U)-\delta,\\
		&\int_{X} \psi_{2}\frac{c_{1}(L,h)^{m}}{m!}\leq 
		\mathrm{Vol}^{L}_{2m}(U)+\delta.
	\end{split}
	\label{eq:5.1.30paris}
\end{equation}
Set $\varphi_{j}=\frac{\psi_{j}}{(m-1)!}c_{1}(L,h)^{m-1}$, $j=1,2$. 
Then we apply Theorem \ref{thm:6.2} to $\varphi_{j}$ separately, we 
get exactly \eqref{eq:6.1.10}. Our proof is completed.
\end{proof}
\begin{remark}
	Assume that $\{\Upsilon_{p}\}_{p\in\bN}$ is defined as in 
	Example \ref{eg:1.3.2} with $\sigma_{p}=1$. Then, similar to Proposition \ref{prop:1.2.3} and the second part of 
	\cite[Theorem 1.4]{SZZ}, we can also give a lower bound ($\simeq 
	e^{-C_{U}\,p^{m+1}}$) for the 
	hole probabilities for a relative compact nonempty open subset 
	$U\subset X$, provided there exists a nowhere vanishing section 
	on $\overline{U}$.
\end{remark}

We give now exhibit two classes of manifolds for which Theorem 
\ref{thm:6.2} applies, each of them has its own interests in various 
fields of complex geometry.
\begin{example}
	Let $M$ is a compact complex manifold of dimension $m$, 
	$\Sigma$ is an analytic subvariety of $M$, $X:=M\setminus\Sigma$.
	We assume that $X$ admits a complete K\"ahler metric $\omega$ 
	such that $\ric_\omega\leq-\lambda\omega$,
	for some constant $\lambda>0$.
	Assume moreover that $\dim\Sigma\leq m-k$, $k\geq2$.
	Then $H^0_{(2)}(X,K_X^p)\subset H^0(M,K_M^p)$
	and $d_p=\dim H^0_{(2)}(X,K_X^p)=\mathcal{O}(p^m)$ as $p\to\infty$.

\end{example}
\begin{example}
	Let $D$ be a bounded symmetric domain in $\mathbb{C}^m$ and let $\Gamma$ be a
	neat arithmetic group acting properly discontinuously on $D$ (see \cite[p.\ 253]{Mum77}).
	Then $U:=D/\Gamma$ is a smooth quasi-projective variety, called an arithmetic variety.
	By \cite{AMRT:10}, $U$ admits a smooth toroidal compactification $X$.
	In particular, $\Sigma:=X\setminus U$ is a divisor with normal crossings.
	The Bergman metric $\omega^{\mathcal B}_{D}$ on $D$ descends to a complete
	K\"ahler metric $\omega:=\omega^{\mathcal B}_{U}$ on $U$.
	Moreover, $\omega$ is K\"ahler-Einstein with $\ric_{\omega}=-\omega$
	(since the metric $\omega^{\mathcal B}_{D}$ has this property).
	We denote by $h^{K_U}$ the Hermitian metric induced by $\omega$ on $K_U$.
	We wish to study the spaces $H^0_{(2)}(U,K_U^p)$ of $\mathcal{L}^{2}$-pluricanonical
	sections with respect to the metric $h^{K^p_U}$ and the volume form 
	$\omega^m$.
\end{example}

\bibliographystyle{plain}

\begin{thebibliography}{99}
	
	\bibitem{AT:07} R. J. Adler and J. E. Taylor, 
	{\em Random fields and geometry}, Springer Monographs in Mathematics, 2007.

	\bibitem{AW:09} J.-M. Aza\"is and M. Wschebor, 
	{\em Level sets and extrema of random processes and fields}, John Wiley \& Sons, Inc., Hoboken, NJ, 2009.
	
	
	\bibitem{AMRT:10} A. Ash, D. Mumford, M. Rapoport and Y. S. Tai, 
	{\em Smooth compactifications of locally symmetric varieties}, 
	2nd ed. With the collaboration of Peter Scholze. Cambridge Mathematical Library. 
	Cambridge University Press, Cambridge, 2010.
	
	\bibitem{AMM:20}
	H.\ Auvray, X.\ Ma and G.\ Marinescu,
	\textit{Bergman kernels on punctured Riemann surfaces},
	Math.\ Ann.\ \textbf{379} (2021), 951--1002.
	
\bibitem{AMM:20b}
	H.\ Auvray, X.\ Ma and G.\ Marinescu,
\textit{Quotient of Bergman kernels on punctured Riemann surfaces}, 
arXiv:2004.03858.	

\bibitem{B6}
T.~Bayraktar, 
{\em Equidistribution of zeros of random holomorphic sections}, 
Indiana Univ. Math. J. {\bf 65} (2016), 1759--1793.

\bibitem{B7}
T.~Bayraktar, 
{\em Zero distribution of random sparse polynomials}, 
Michigan Math. J. {\bf 66} (2017), 389--419.


\bibitem{BCHM18}	
T.\ Bayraktar, D.\ Coman, H.\ Herrmann and G.\ Marinescu, 
\textit{A survey on zeros of random holomorphic sections}, 
Dolomites Res.\ Notes Approx.v \textbf{11} (2018), 
Special Issue Norm Levenberg, 1--19.

\bibitem{BCM20}	
T.\ Bayraktar, D.\ Coman and G.\ Marinescu, 
\textit{Universality results for zeros of random holomorphic sections}, 
Trans.\ Amer.\ Math.\ Soc.\ \textbf{373} (2020), no.\ 6, 3765--3791.

\bibitem{BL15} T. Bloom and N. Levenberg, 
{\em Random polynomials and pluripotential-theoretic extremal functions}, 
Potential Anal. {\bf 42} (2015), 311--334.


\bibitem{CM11} D.\ Coman and G.\ Marinescu, 
	\textit{Equidistribution results for singular metrics on line bundles},
	Ann.\ Sci.\ \'Ec.\ Norm.\ Sup\'er.\ (4), \textbf{48} (2015), 
	no.\ 3, 497--536.
	
\bibitem{DS06} T.-C. Dinh and N. Sibony,
{\em Distribution des valeurs de transformations m\'eromorphes et applications}, 
Comment.\ Math.\ Helv.\ {\bf 81} (2006), 221--258.
	
	\bibitem{DMM}  T.-C.\ Dinh, X.\ Ma and G.\ Marinescu, 
{\em Equidistribution and  convergence speed for zeros of holomorphic 
sections of singular Hermitian line bundles}, 
J.\ Funct.\ Anal.\ \textbf{271} (2016), no.\ 11, 3082--3110.

	\bibitem{DMS12}
	T. C.\ Dinh, G.\ Marinescu and V.\ Schmidt,
	\textit{Equidistribution of zeros of holomorphic sections in the non 
	compact setting},
	J.\ Stat.\ Phys.\ \textbf{148} (2012), no.\ 1, 113--136.
	
	\bibitem{fk}  H.\ M.\ Farkas and I.\ Kra, 
	\textit{Riemann surfaces}, Graduate Texts in Mathematics, 71. 
	Springer-Verlag, New York-Berlin, 1980. xi+337 pp.
	
	\bibitem{FJK2016} 
	J. S.\ Friedman, J.\ Jorgenson and J.\ Kramer,
\textit{Uniform sup-norm bounds on average for cusp forms of higher 
weights}, Arbeitstagung Bonn 2013, Progr. Math. \textbf{319} (2016), 
127--154.

	\bibitem{Kr06}
	M.\ Krishnapur, \textit{Overcrowding estimates for zeroes of planar and hyperbolic Gaussian analytic
	functions}, J.\ Stat.\ Phys.\ \textbf{124} (2006), 1399--1423.
	
	\bibitem{MM07} X.\ Ma and G.\ Marinescu, 
	\textit{Holomorphic Morse Inequalities and Bergman Kernels}, 
	Progress in Mathematics, 254. Birkh\"auser Verlag, Basel, 2007. xiv+422 pp.
	
	\bibitem{Mum77} D. Mumford, 
	\textit{Hirzebruch's proportionality theorem in the noncompact case},
	Invent.\ Math.\ {\bf 42} (1977), 239--272.
	
	\bibitem{Off67} A.\ C.\ Offord, 
	\textit{The distribution of zeros of power series whose coefficients 
	are independent random variables}, 
	Indian J. Math. \textbf{9} (1967), 175--196.
	
\bibitem{PV05}
Y.\ Peres and B.\ Vir\'ag, 
\textit{Zeros of the i.i.d. Gaussian power series: a conformally invariant 
determinantal process}, Acta Math.\ \textbf{194} (2005), 1--35.

\bibitem{Rudnick2005}
   Z.\ Rudnick,
  \textit{On the asymptotic distribution of zeros of modular forms}, 
  Int. Math. Res. Not.\ \textbf{34} (2005), 2059--2074.

\bibitem{ShZ99} B.\ Shiffman and S.\ Zelditch, 
\textit{Distribution of zeros of random and quantum chaotic sections 
of positive line bundles}, 
Comm.\ Math.\ Phys.\ {\bf 200} (1999), 661--683.
	
\bibitem{SZ08}
B.\ Shiffman and S.\ Zelditch,
\textit{Number variance of random zeros on complex manifolds},
Geometric and Functional Analysis, 18(4):1422--1475, 2008.
	
	\bibitem{SZZ}
	B.\ Shiffman, S.\ Zelditch and S.\ Zrebiec,
	\textit{Overcrowding and hole probabilities for random zeros on 
	complex manifolds},
	Indiana Univ.\ Math.\ J.\ \textbf{57} (2008), no.\ 5, 1977--1997. 

   	\bibitem{So00} M. Sodin, 
	\textit{Zeros of Gaussian analytic functions}, 
	Math. Res. Lett. \textbf{7} (2000), 371--381.
	
	\bibitem{ST04}
	M.\ Sodin and B.\ Tsirelson, \textit{Random complex zeroes. I. Asymptotic normality}, 
	Israel J.\ Math.\
	\textbf{144} (2004), 125–149.
	
	\bibitem{Zb07}
	S. Zrebiec, \textit{The zeros of flat Gaussian random holomorphic functions on $\C^n$, 
	and hole probability},
	Michigan Math.\ J.\ \textbf{55} (2007), 269--284.
	
\end{thebibliography}

\end{document}